\newtheorem{theorem}{Theorem}[section]
\newtheorem{prop}[theorem]{Proposition}
\newtheorem{lemma}[theorem]{Lemma}
\newtheorem{coro}[theorem]{Corollary}
\newtheorem{prop-def}{Proposition-Definition}[section]
\theoremstyle{definition}
\newtheorem{defn}[theorem]{Definition}
\newtheorem{remark}[theorem]{Remark}
\newtheorem{exam}[theorem]{Example}
\newtheorem{Ques}[theorem]{Question}
\newtheorem{List}[theorem]{List}
\newcommand{\nc}{\newcommand}
\nc{\delete}[1]{{}}
\nc{\mmargin}[1]{}
\nc{\mlabel}[1]{\label{#1}}  
\nc{\mcite}[1]{\cite{#1}}  
\nc{\mref}[1]{\ref{#1}}  
\nc{\mbibitem}[1]{\bibitem{#1}} 
	\nc{\mlabel}[1]{\label{#1}  
		{\hfill \hspace{1cm}{\bf{{\ }\hfill(#1)}}}}
	\nc{\mcite}[1]{\cite{#1}{{\bf{{\ }(#1)}}}}  
	\nc{\mref}[1]{\ref{#1}{{\bf{{\ }(#1)}}}}  
	\nc{\mbibitem}[1]{\bibitem[\bf #1]{#1}} 
 \font\cyrs=wncyr7
\newcommand{\bk}{{\mathbf{k}}}
\nc{\vep}{\varepsilon}
\nc{\bin}[2]{ (_{\stackrel{\scs{#1}}{\scs{#2}}})}  
\nc{\binc}[2]{(\!\! \begin{array}{c} \scs{#1}\\
		\scs{#2} \end{array}\!\!)}  
\nc{\bincc}[2]{  ( {\scs{#1} \atop
		\vspace{-1cm}\scs{#2}} )}  
\nc{\oline}[1]{\overline{#1}}
\nc{\mapm}[1]{\lfloor\!|{#1}|\!\rfloor}
\nc{\bs}{\bar{S}}
\nc{\la}{\longrightarrow}
\nc{\ot}{\otimes}
\nc{\rar}{\rightarrow}
\nc{\lon }{\,\rightarrow\,}
\nc{\dar}{\downarrow}
\nc{\dap}[1]{\downarrow \rlap{$\scriptstyle{#1}$}}
\nc{\defeq}{\stackrel{\rm def}{=}}
\nc{\dis}[1]{\displaystyle{#1}}
\nc{\dotcup}{\ \displaystyle{\bigcup^\bullet}\ }
\nc{\hcm}{\ \hat{,}\ }
\nc{\hts}{\hat{\otimes}}
\nc{\hcirc}{\hat{\circ}}
\nc{\lleft}{[}
\nc{\lright}{]}
\nc{\curlyl}{\left \{ \begin{array}{c} {} \\ {} \end{array}
	\right .  \!\!\!\!\!\!\!}
\nc{\curlyr}{ \!\!\!\!\!\!\!
	\left . \begin{array}{c} {} \\ {} \end{array}
	\right \} }
\nc{\longmid}{\left | \begin{array}{c} {} \\ {} \end{array}
	\right . \!\!\!\!\!\!\!}
\nc{\ora}[1]{\stackrel{#1}{\rar}}
\nc{\ola}[1]{\stackrel{#1}{\la}}
\nc{\scs}[1]{\scriptstyle{#1}} \nc{\mrm}[1]{{\rm #1}}
\nc{\dirlim}{\displaystyle{\lim_{\longrightarrow}}\,}
\nc{\invlim}{\displaystyle{\lim_{\longleftarrow}}\,}
\nc{\dislim}[1]{\displaystyle{\lim_{#1}}} \nc{\colim}{\mrm{colim}}
\nc{\mvp}{\vspace{0.3cm}} \nc{\tk}{^{(k)}} \nc{\tp}{^\prime}
\nc{\ttp}{^{\prime\prime}} \nc{\svp}{\vspace{2cm}}
\nc{\vp}{\vspace{8cm}}
\nc{\modg}[1]{\!<\!\!{#1}\!\!>}
\nc{\intg}[1]{F_C(#1)}
\nc{\lmodg}{\!<\!\!}
\nc{\rmodg}{\!\!>\!}
\nc{\cpi}{\widehat{\Pi}}
\nc{\ssha}{{\mbox{\cyrs X}}} 
\nc{\tsha}{{\mbox{\cyrt X}}}
\nc{\shpr}{\diamond}    
\nc{\labs}{\mid\!}
\nc{\rabs}{\!\mid}
 \nc{\zhx}{\text{-}}
\nc{\ad}{\mrm{ad}}
\nc{\ann}{\mrm{ann}}
\nc{\Aut}{\mrm{Aut}}
\nc{\Av}{\mrm{Av}}
\nc{\bim}{\mbox{-}\mathsf{Bimod}}
\nc{\br}{\mrm{bre}}
\nc{\can}{\mrm{can}}
\nc{\Cont}{\mrm{Cont}}
\nc{\rchar}{\mrm{char}}
\nc{\cok}{\mrm{coker}}
\nc{\de}{\mrm{dep}}
\nc{\Dif}{\mrm{Diff}}
\nc{\dtf}{{R-{\rm tf}}}
\nc{\dtor}{{R-{\rm tor}}}
\nc{\Div}{{\mrm Div}}
\nc{\Diff}{\mrm{DA}}
\nc{\Diffl}{\mathsf{DA}_\lambda}
\nc{\diffo}{{\mathsf{DO}_\lambda}}
\nc{\alg}{\mathsf{Alg}}
\nc{\End}{\mrm{End}}
\nc{\Ext}{\mrm{Ext}}
\nc{\Fil}{\mrm{Fil}}
\nc{\Fr}{\mrm{Fr}}
\nc{\Frob}{\mrm{Frob}}
\nc{\Gal}{\mrm{Gal}}
\nc{\GL}{\mrm{GL}}
\nc{\Hom}{\mrm{Hom}}
\nc{\Hoch}{\mrm{Hoch}}
\nc{\hsr}{\mrm{H}}
\nc{\hpol}{\mrm{HP}}
\nc{\id}{\mrm{id}}
\nc{\im}{\mrm{im}}
\nc{\Id}{\mrm{Id}}
\nc{\ID}{\mrm{ID}}
\nc{\Irr}{\mrm{Irr}}
\nc{\incl}{\mrm{incl}}
\nc{\length}{\mrm{length}}
\nc{\NLSW}{\mrm{NLSW}}
\nc{\Lie}{\mrm{Lie}}
\nc{\Nij}{\mrm{Nij}}
\nc{\mchar}{\rm char}
\nc{\mpart}{\mrm{part}}
\nc{\ql}{{\QQ_\ell}}
\nc{\qp}{{\QQ_p}}
\nc{\rank}{\mrm{rank}}
\nc{\rcot}{\mrm{cot}}
\nc{\rdef}{\mrm{def}}
\nc{\rdiv}{{\rm div}}
\nc{\Rey}{\mrm{Rey}}
\nc{\rtf}{{\rm tf}}
\nc{\rtor}{{\rm tor}}
\nc{\res}{\mrm{res}}
\nc{\SL}{\mrm{SL}}
\nc{\Spec}{\mrm{Spec}}
\nc{\tor}{\mrm{tor}}
\nc{\Tr}{\mrm{Tr}}
\nc{\tr}{\mrm{tr}}
\nc{\wt}{\mrm{wt}}
\def\ot{\otimes}
\nc{\bfk}{{\bf k}}
\nc{\bfone}{{\bf 1}}
\nc{\bfzero}{{\bf 0}}
\nc{\detail}{\marginpar{\bf More detail}
	\noindent{\bf Need more detail!}
	\svp}
\nc{\gap}{\marginpar{\bf Incomplete}\noindent{\bf Incomplete!!}
	\svp}
\nc{\FMod}{\mathbf{FMod}}
\nc{\Int}{\mathbf{Int}}
\nc{\remarks}{\noindent{\bf Remarks: }}
\nc{\ob}{\mathsf{Ob}}
\nc{\BA}{{\mathbb A}}   \nc{\CC}{{\mathbb C}}
\nc{\DD}{{\mathbb D}}   \nc{\EE}{{\mathbb E}}
\nc{\FF}{{\mathbb F}}   \nc{\GG}{{\mathbb G}}
\nc{\HH}{{\mathbb H}}   \nc{\LL}{{\mathbb L}}
\nc{\NN}{{\mathbb N}}   \nc{\PP}{{\mathbb P}}
\nc{\QQ}{{\mathbb Q}}   \nc{\RR}{{\mathbb R}}
\nc{\TT}{{\mathbb T}}   \nc{\VV}{{\mathbb V}}
\nc{\ZZ}{{\mathbb Z}}   \nc{\TP}{\widetilde{P}}
\nc{\cala}{{\mathcal A}}    \nc{\calc}{{\mathcal C}}
\nc{\cald}{\mathcal{D}}     \nc{\cale}{{\mathcal E}}
\nc{\calf}{{\mathcal F}}    \nc{\calg}{{\mathcal G}}
\nc{\calh}{{\mathcal H}}    \nc{\cali}{{\mathcal I}}
\nc{\call}{{\mathcal L}}    \nc{\calm}{{\mathcal M}}
\nc{\caln}{{\mathcal N}}    \nc{\calo}{{\mathcal O}}
\nc{\calp}{{\mathcal P}}    \nc{\calr}{{\mathcal R}}
\nc{\cals}{{\mathcal S}}    \nc{\calt}{{\Omega}}
\nc{\calv}{{\mathcal V}}    \nc{\calw}{{\mathcal W}}
\nc{\calx}{{\mathcal X}}    \nc{\calu}{{\mathcal U}}
\nc{\caly}{{\mathcal Y}}
\nc{\uOpAlg}{\mathfrak{uOpAlg}}
\nc{\OpAlg}{\mathfrak{OpAlg}}
\nc{\ComOpAlg}{\mathfrak{ComOpAlg}}
\nc{\OpVect}{\mathfrak{OpVect}}
\nc{\OpSet}{\mathfrak{OpSet}}
\nc{\OpMon}{\mathfrak{OpMon}}
\nc{\ComOpMon}{\mathfrak{ComOpMon}}
\nc{\OpSem}{\mathfrak{OpSem}}
\nc{\ComOpSem}{\mathfrak{ComOpSem}}
\nc{\uAlg}{\mathfrak{uAlg}}
\nc{\Alg}{\mathfrak{Alg}}
\nc{\ComAlg}{\mathfrak{ComAlg}}
\nc{\Vect}{\mathfrak{Vect}}
\nc{\Set}{\mathfrak{Set}}
\nc{\Mon}{\mathfrak{Mon}}
\nc{\ComMon}{\mathfrak{ComMon}}
\nc{\Sem}{\mathfrak{Sem}}
\nc{\ComSem}{\mathfrak{ComSem}}
\nc{\fraka}{{\mathfrak a}}
\nc{\frakb}{\mathfrak{b}}
\nc{\frakg}{{\frak g}}
\nc{\frakl}{{\frak l}}
\nc{\fraks}{{\frak s}}
\nc{\frakB}{{\frak B}}
\nc{\frakm}{{\frak m}}
\nc{\frakM}{{\mathfrak M}}
\nc{\frakp}{{\frak p}}
\nc{\frakW}{{\frak W}}
\nc{\frakX}{{\frak X}}
\nc{\frakS}{{\frak S}}
\nc{\frakA}{{\frak A}}
\nc{\frakx}{{\frakx}}
\nc{\lir}[1]{\textcolor{red}{\underline{Li:}#1 }}
\begin{document}

\title[Free objects]{Free objects and Gr\"obner-Shirshov bases  in operated contexts}

\author{Zihao Qi, Yufei Qin, Kai Wang and Guodong Zhou}
\address{  School of Mathematical Sciences, Shanghai Key Laboratory of PMMP,
  East China Normal University,
 Shanghai 200241,
   China}
   \email{qizihao@foxmail.com}
   \email{290673049@qq.com}
\email{wangkai@math.ecnu.edu.cn }
 \email{gdzhou@math.ecnu.edu.cn}

\date{\today}

\begin{abstract} This paper investigates algebraic objects equipped with an operator, such as operated monoids, operated algebras etc.  Various free  object functors   in these operated contexts are explicitly constructed. For operated algebras whose   operator satisfies  a set $\Phi$ of  relations
 (usually called operated polynomial identities (aka. OPIs)),  Guo defined free objects, called free $\Phi$-algebras,      via universal algebra.
 Free $\Phi$-algebras over algebras are studied in details.
 A mild sufficient condition is found
   such that $\Phi$  together with a    Gr\"obner-Shirshov basis of an algebra $A$    form a Gr\"obner-Shirshov basis of  the free $\Phi$-algebra over  algebra $A$   in the sense of Guo et al..  Ample examples for which this condition holds are provided, such as      all Rota-Baxter type OPIs, a class of differential type OPIs, averaging OPIs and Reynolds OPI.

\end{abstract}

\subjclass[2010]{
13P10 
03C05 
08B20 
12H05 
16S10 
}

\keywords{Differential type OPIs; free objects; Gr\"obner-Shirshov bases; operated algebras; free operated algebras over algebras; operated polynomial identities;    Rota-Baxter type OPIs; universal algebra}

\maketitle

 \tableofcontents

\allowdisplaybreaks

\section*{Introduction}

This paper studies explicit constructions of free objects and Gr\"obner-Shirshov bases  in operated contexts.

\smallskip

 In the 1930s, Ritt \cite{Rit,Rit1}  initiated the algebraic study of   differential equations by introducing differential algebras as  commutative algebras equipped with a linear operator satisfying the usual Leibniz rule.
  This mathematical branch has received ample development in the work    \cite{Kol,Mag,PS} and has broad applications to other areas such as
    arithmetic geometry, logic,  computer science  and mathematical physics \cite{CM,FLS,MS,Wu,Wu2} etc.
In recent years, researchers began to investigate noncommutative differential algebras  in order to broaden the scope of the theory to  include   path algebras, for instance,  and to have a more meaningful differential Lie algebra theory  \cite{GL, Poi, Poi1} and also from  an operadic point of view \cite{Loday}

\smallskip

Another important class of algebras with operators are Rota-Baxter algebras.
These algebras (previously known as  Baxter algebras) originated with the work of    Baxter \cite{Bax}   on probability theory.
Baxter's work was further investigated by,  among others,   Rota \cite{Rota69}  (hence the name  ``Rota-Baxter algebras''),   Cartier \cite{Cartier} and   Atkinson \cite{Atkinson} etc.
Nowadays, Rota-Baxter algebras have  numerous  applications and connections to many  mathematical branches,  to name a few, such as combinatorics \cite{GuGuo,Rota95},  renormalization
in quantum field theory   \cite{CK}, multiple zeta values in number theory \cite{Guozhang},   operad theory \cite{Aguiar,BBGN},
  Hopf algebras \cite{CK}, Yang-Baxter equation \cite{Bai} etc. For basic theory about Rota-Baxter algebras, we refer the reader to the short introduction \cite{Guo09b} and to  the comprehensive monograph \cite{Guo12}.

 \medskip

Free objects are ubiquitous in mathematics. The general idea is to present an object by realizing it as a quotient of a free object, whence    presentations by generators and relations. Explicit construction of free objects are usually very important in many subjects.
Earlier construction of commutative  free Rota-Baxter algebras were given by Rota \cite{Rota69}  and Cartier \cite{Cartier}. In the pioneering work \cite{GuoKeigher00a,GuoKeigher00b}, Guo and Keigher   gave the   third construction   in terms of   mixable shuffle product algebras, generalizing the well-known construction of shuffle
product algebras.
 Several groups of authors    gave explicit constructions of free noncommutative
Rota-Baxter algebras on sets, modules or algebras \cite{AM,ERG08a,ERG08b,Guo09a,GuoSit}.

Guo  \cite{Guo09a} revisited these constructions from the viewpoint of semigroups, monoids or algebras endowed with operators in the sense of Higgins \cite{Higgins} and Kurosh \cite{Kurosh}, which Guo renamed as  operated semigroups, operated monoids or operated algebras.
He constructed the  free operated semigroup,  the free operated monoid  and the free operated algebras from a set in terms of bracketed words,  Motzkin paths and rooted trees; see also \cite{DH,BokutChenQiu,ZhangGao}.
When the operator in question satisfies certain relations,   Guo, Sit and Zhang   \cite{GSZ} introduced the notion of  operated polynomial identities (aka. OPIs). By the theory of  universal algebra,  for a set $\Phi$ of OPIs,  the free $\Phi$-algebra can be realised as the quotient of the free operated algebras by the operated ideal generated by $\Phi$; see, for instance, \cite[Proposition 1.3.6]{Cohn}.
This gives a universal construction for free $\Phi$-algebras.

\smallskip

 The next step is to develop
a   theory of rewriting systems and that of Gr\"{o}bner-Shirshov (aka. GS) bases  in this operated context, parallel to and include as a special case,  the usual well developed  theory  for associative algebras \cite{Shirshov,Buc,Green,BokutChen14,BokutChen20} and the original theory of rewriting systems \cite{BN}.   This  has been done in \cite{BokutChenQiu,GSZ,GGSZ,GaoGuo17} and was applied to various setting,  in particular, to two important classes of OPIs:  differential type OPIs and Rota-Baxter type OPIs which were carefully studied by Guo et al. \cite{GSZ}\cite{GGSZ}\cite{GaoGuo17}.  Recently, there is a need to develop free $\Phi$-algebras over algebras and construct   GS  bases for these free $\Phi$-algebras as long as a GS  basis is known for the given algebra; see \cite{ERG08b,LeiGuo,GuoLi}.

\medskip

This paper extends and refines some aspects of the above works.

\smallskip

In the first three sections,
we present a careful analysis for explicit construction of free objects in various operated contexts.

Denote the category of  sets (resp. semigroups, monoids)  by $\Set$ (resp. $\Sem$, $\Mon$).
We consider categories of structured  sets  endowed with an operator which is merely a map without any further condition, such as  operated sets, operated semigroups and  operated monoids; see Definition~\ref{Def: operated sets to monoids}.
Denote  the category of operated sets, that of  operated semigroups and that of operated monoids by  $ \OpSet $, $ \OpSem $ and $ \OpMon $,   respectively.
We also investigate vector spaces with structures endowed with a linear  operator such as  operated vector spaces, operated algebras and operated unital algebras; see   Definition~\ref{Def: operated vector spaces to unital algebras}. We use  $ \OpVect $ (resp. $ \OpAlg $, $  \uOpAlg $) to denote the category of operated $\bk$-vector spaces (resp. operated $\bk$-algebras,  operated unital $\bk$-algebras).

Our goal in the first three sections is to complete the following diagram of free object functors:
\[\xymatrixrowsep{2.5pc}
\xymatrix{
	& \OpVect \ar@<.5ex>@{->}[rr]
	& &\OpAlg   \ar@<.5ex>@{->}[rr]
	&  & \uOpAlg   \\
	\OpSet \ar@<.5ex>@{->}[ur] \ar@<.5ex>@{->}[rr]
	& &\OpSem  \ar@<.5ex>@{->}[ur]   \ar@<.5ex>@{->}[rr]
	& & \OpMon  \ar@<.5ex>@{->}[ur]    &\\
	&  \Vect  \ar@<.5ex>@{-->}[uu]  \ar@<.5ex>@{-->}[rr]
	&  &  \Alg   \ar@<.5ex>@{-->}[uu]  \ar@<.5ex>@{-->}[rr]
	& &  \uAlg  \ar@<.5ex>@{->}[uu]   \\
	\Set \ar@<.5ex>@{->}[uu] \ar@<.5ex>@{-->}[ur] \ar@<.5ex>@{->}[rr]
	&  & \Sem   \ar@<.5ex>@{->}[rr] \ar@<.5ex>@{->}[uu]\ar@<.5ex>@{-->}[ur]
	&  & \Mon  \ar@<.5ex>@{->}[ur]  \ar@<.5ex>@{->}[uu]  &   }
\]
Note that these free object functors  are left adjoint to    the obvious forgetful functors which  are not drawn in the diagram.

In the first section, we consider the bottom  face and part of the  upper  face,  the second section contains an analysis of the front  face and the back  face is dealt with in the third section.
  Except the   functors in the  bottom face, it seems that any other free object functor has not been  appeared in the literature, while some composites were pointed out by Guo \cite{Guo09a}.

\smallskip

The fourth section recalls the basic   theory of   operated contexts \cite{Guo09a,GSZ,GGSZ,GaoGuo17}. In particular, as an application of the previous three sections, we can construct, via universal algebra,   the free $\Phi$-algebra over  a given algebra for a set of OPIs $\Phi$;  see Propositions \ref{Prop: free Phi-algebra over  a given algebra via universal algebra} and \ref{Prop: from unital algebra to Phi algebra with relations}.

\smallskip

The first half of the fifth section contains an account of the    theory  of  GS  basis in operated contexts \cite{BokutChenQiu,DH,GSZ,GGSZ,GaoGuo17}.  In the second half of Section 5, inspired by \cite{ERG08b,LeiGuo,GuoLi},  we are interested into a question which can be roughly expressed as follows:
\begin{Ques} Given a unital  algebra  $A$ with a   GS basis  $G$ and  a set of OPIs $\Phi$,
	  assume that   $\Phi$ is GS  in the sense of \cite{BokutChenQiu,GSZ,GGSZ,GaoGuo17}.  Let $B$
be the free $\Phi$-algebra over $A$. Under what conditions, $\Phi\cup G$ will be  a GS  basis  for $    B$?
\end{Ques}
We answer this question in the affirmative under a mild condition in Theorem~\ref{Thm: GS basis for free Phi algebra over alg}, which can be considered as the main result of this paper. When this   condition is satisfied, $\Phi\cup G$  is  a GS  basis  for $    B$.  As a consequence, we also get  a linear basis of $B$; see Corollary~\ref{Cor: linear basis for free Phi algebra over alg}.

\smallskip

Section 6 contains  many applications of Theorem~\ref{Thm: GS basis for free Phi algebra over alg} and Corollary~\ref{Cor: linear basis for free Phi algebra over alg}. When $\Phi$ consists of a single OPI of Rota-Baxter type in the sense of \cite{GGSZ, GaoGuo17}, this technical condition is fulfilled; see Theorem~\ref{Thm: GS basis for free Rota type over algebra}. When $\Phi$ consists of a single OPI of differential type in the sense of \cite{GSZ, GaoGuo17}, the situation is much more involved and we give two particular cases where this technical condition holds; see Proposition~\ref{Prop: GS basis for differential type 1+3}.
  We also provide a case study for Gr\"obner-Shirshov $\Phi$ consisting of several OPIs. For averaging algebras,  $\Phi$ has three elements and for Reynolds algebras, $\Phi$ will have  infinitely many  elements;
based on \cite{GaoZhang, ZhangGaoGuo}, we could obtain a    GS  basis as well as  a linear basis for free averaging algebras over algebras and free Reynolds algebras over algebras, respectively.

\medskip

\textbf{Notation:} Throughout this paper, $\bk$ denotes a field. All the vector spaces and algebras are over $\bk$ and all tensor products are also taking over $\bk$.

\medskip

 \textbf{Acknowledgements:}   The authors were  supported    by NSFC (No. 11671139, 11971460, 12071137)   and  by  STCSM  (No. 18dz2271000).

  The authors are grateful to Profs.  Xing Gao,  Li Guo  and  Yunnan Li      for their inspiring online talks and many useful comments.

\section{Free object functors I}

Firstly, we introduce some relevant definitions and notations.

Recall that $\Set$ (resp. $\Sem$, $\Mon$, $\Vect$, $\Alg$, $\uAlg$)  denotes the category of  sets (resp. semigroups, monoids, $\bk$-vector spaces,   $\bk$-algebras, unital $\bk$-algebras).

The following two defintions are taken from \cite{Guo09a,GGSZ} except the notion of operated sets.
\begin{defn} \label{Def: operated sets to monoids}
	\begin{itemize}
    \item[(a)] An operated set is a set $X$  with a map $P_X: X \rightarrow X$.
	 A morphism of operated sets from $(X, P_X)$ to $(Y, P_Y)$ is a map $ f: X \rightarrow Y$ such that $f\circ P_X = P_Y\circ f $. 
 Denote the category of operated sets by $ \OpSet $.
	
	\item[(b)] An operated semigroup is a semigroup $S$ with a map $P_S:S\rightarrow S$ (which is not necessarily a homomorphism of semigroups). Morphisms between operated semigroups can be defined in the obvious way.  Denote the category of operated semigroups by $ \OpSem $.
	
	\item[(c)]  An operated monoid  is a monoid
$ M $ together with a map $ P_{M}:M\rightarrow M $ (which is not necessarily a homomorphism of monoids).  One can define morphisms between operated monoids   similarly.  Denote the category of operated monoids by $ \OpMon $.
\end{itemize}

\end{defn}

\begin{defn} \label{Def: operated vector spaces to unital algebras}
	An operated $\bk$-space (resp. operated $\bk$-algebra, operated unital $\bk$-algebra) is a  $\bk$-space (resp. $\bk$-algebra, unital $\bk$-algebra)	  $V$  together with a $\bk$-linear map $ P_{V}: V\rightarrow V $. A morphism between   operated $\bk$-vector spaces (resp. operated $\bk$-algebras, operated unital $\bk$-algebras) $ (V, P_V) $ and  $(W, P_W)  $ is a homomorphism of  $\bk$-vector spaces (resp. $\bk$-algebras, unital $\bk$-algebras) $ f : V \rightarrow W $ such that $f\circ P_V = P_W\circ f $.


	We use $ \OpVect $ (resp. $ \OpAlg $,  $ \uOpAlg $) to denote the category of operated $\bk$-vector spaces (resp. operated $\bk$-algebras, operated unital $\bk$-algebras ).
\end{defn}

Now, we are going to complete  the bottom face and  part of the upper face of the cubic diagram displayed in Introduction.

\subsection{The bottom face}\

Firstly, let's consider    the following diagram:
$$\xymatrixcolsep{6pc}\xymatrixrowsep{5.5pc}
\xymatrix{ \Vect \ar@<1ex>[r]^{\calf^{\Alg}_{\Vect}}    \ar@<1ex>[d]^{\calu^{\Vect}_{\Set}}
	& \Alg  \ar@<1ex>[d]^{\calu^{\Alg}_{\Sem}}  \ar@<1ex>[l]^{\calu_{\Vect}^{\Alg}}  \ar@<1ex>[r]^{\calf^{\uAlg}_{\Alg}}
	& \uAlg \ar@<1ex>[l]^{\calu_{\Alg}^{\uAlg}}   \ar@<1ex>[d]^{\calu^{\uAlg}_{\Mon}} \\
	\Set  \ar@<1ex>[r]^{\calf^{\Sem}_{\Set}}      \ar@<1ex>[u]^{\calf_{\Set}^{\Vect}}
	&  \Sem   \ar@<1ex>[l]^{\calu_{\Set}^{\Sem}}\ar@<1ex>[r]^{\calf^{\Mon}_{\Sem}}  \ar@<1ex>[u]^{\calf_{\Sem}^{\Alg}}
	&\Mon \ar@<1ex>[u]^{\calf_{\Mon}^{\uAlg}}  \ar@<1ex>[l]^{\calu_{\Sem}^{\Mon}}}$$
where,  as explained in Introduction, the    symbols  $\calu^*_*$ stand for  forgetful functors and
the free object functors    $\calf^*_*$  are left adjoint to the corresponding forgetful functors.

It is well known that the functor  $\calf^{\Sem}_{\Set}$ assigns to a set $X$ the free semigroup $\cals(X):=\calf^{\Sem}_{\Set}(X)$  generated by it; the functor $\calf_{\Sem}^{\Mon}$ is given by adding a unit element to a semigroup; as a consequence, the composite $\calf_{\Set}^{\Mon}:=\calf_{\Sem}^{\Mon}\circ \calf_{\Set}^{\Sem}$ is just  the usual free monoid functor sending  a set $X$ to the free monoid $\calm(X):=\calf_{\Set}^{\Mon}(X)$ generated by $X$.

For any $\bk$-space $V$, $\calf_{\Vect}^{\Alg}(V)$ is the reduced tensor algebra $\bar{\TT}(V)=\bigoplus_{n\geqslant 1}V^{\ot n}$;  given a non-unital algebra $A$, the algebra $\calf_{\Alg}^{\uAlg}(A)$ is defined to be the  space $\bk\oplus A$ endowed with multiplication $$(\lambda, a)(\mu, b)=(\lambda \mu, \lambda b+\mu a+a  b)$$ for $\lambda, \mu \in \bk$ and $a, b\in A$, which is a unital $\bk$-algebra with unit $(1_\bk, 0)$; for any $\bk$-space $V$, the free unital $\bk$-algebra  $\calf_{\Vect}^{\uAlg}(V)=\calf_{\Alg}^{\uAlg} \circ \calf_{\Vect}^{\Alg}(V)$  generated by $V$ is just the tensor algebra  $\TT(V)=\bk\oplus \bar{\TT}(V)=\bigoplus_{n\geqslant 0}V^{\ot n}$. %

 While the vertical forgetful functors $\calu_*^*$ in the above diagram  is   ignoring the $\bk$-space structure, the vertical free object functors which are left adjoint to the corresponding forgetful functors are exactly given by  linearization. For a set (resp.  semigroup, monoid) $X$,  $\calf_{\Set}^{\Vect}(X)$ (resp. $\calf_{\Sem}^\Alg(X)$, $\calf_{\Mon}^\uAlg(X)$)  is exactly $\bk X$, the $\bk$-space spanned by elements in $X$, which is naturally a vector space (resp. algebra, unital algebra).    Denote $\calf_{\Set}^{\uAlg}=\calf_{\Alg}^{\uAlg}\circ \calf_{\Vect}^{\Alg}\circ\calf_{\Set}^{\Vect}$.  Then for a set $X$, $\calf_{\Set}^{\uAlg}(X)$ is the free unital algebra  generated by $X$, which is just $\bk \calm(X)$, the  noncommutative polynomial algebra with variables in $X$.

\subsection{The upper face}
Now, let's clarify the vertical arrows of the following diagram:
 $$\xymatrixcolsep{5pc}\xymatrixrowsep{5pc}
\xymatrix{ \OpVect \ar@<1ex>[r]    \ar@<1ex>[d]^{\calu^{\OpVect}_{\OpSet}}
	& \OpAlg  \ar@<1ex>[d]^{\calu^{\OpAlg}_{\OpSem}}  \ar@<1ex>[l]
\ar@<1ex>[r]
	& \uOpAlg \ar@<1ex>[l]   \ar@<1ex>[d]^{\calu^{\uOpAlg}_{\OpMon}} \\
 \OpSet  \ar@<1ex>[r]      \ar@<1ex>[u]^{\calf_{\OpSet}^{\OpVect}}
	&  \OpSem   \ar@<1ex>[l]\ar@<1ex>[r]  \ar@<1ex>[u]^{\calf_{\OpSem}^{\OpAlg}}
	&\OpMon \ar@<1ex>[u]^{\calf_{\OpMon}^{\uOpAlg}}  \ar@<1ex>[l]}$$

Same as in the previous subsection, the vertical forgetful functors are just forgetting the $\bk$-space structure  and the vertical free object functors are also given by linearization.
 For an operated set (resp. operated semigroup, operated monoid)  $(X, P_X)$, the associated operated vector space  $\calf_{\OpSet}^{\OpVect}(X,P_X)$ (resp. operated algebra $\calf_{\OpSem}^{\OpAlg}(X,P_X)$, unital operated algebra $\calf_{\OpMon}^{\uOpAlg}(X,P_X)$) is just $(\bk X, P_{\bk X})$, where the linear operator $P_{\bk X}$ on $\bk X$ is just the linear extension of the operator $P_X$ on $X$.

\section{Free object functors II}
In this section,  we want  to investigate  the front face of the   diagram displayed in Introduction.  More precisely,  we will construct  all the free object functors in  the following diagram of functors:
   $$\xymatrixcolsep{5pc}\xymatrixrowsep{5pc}
 \xymatrix{\OpSet\ar@<1ex>[r]^{\calf^{\OpSem}_{\OpSet}}   \ar@<1ex>[d]^{\calu^{\OpSet}_{\Set}}
 & \OpSem\ar@<1ex>[r]^{\calf^{\OpMon}_{\OpSem}}\ar@<1ex>[l]^{\calu_{\OpSet}^{\OpSem}} \ar@<1ex>[d]^{\calu^{\OpSem}_{\Sem}}
  &  \OpMon\ar@<1ex>[l]^{\calu_{\OpSem}^{\OpMon}}\ar@<1ex>[d]^{\calu^{\OpMon}_{\Mon}}\\
 \Set\ar@<1ex>[u]^{\calf_{\Set}^{\OpSet}} \ar@<1ex>[r]^{\calf^{\Sem}_{\Set}}
 & \Sem\ar@<1ex>[r]^{\calf^{\Mon}_{\Sem}}\ar@<1ex>[u]^{\calf_{\Sem}^{\OpSem}}\ar@<1ex>[l]^{\calu_{\Set}^{\Sem}}
 &   \Mon \ar@<1ex>[u]^{\calf_{\Mon}^{\OpMon}}  \ar@<1ex>[l]^{\calu_{\Sem}^{\Mon}}}$$
Same as before,  the    symbols  $\calu^*_*$ represent   forgetful functors and
 the free object functors    $\calf^*_*$  are left adjoint to the corresponding forgetful functors.

\subsection{Free object functor from  sets to operated sets}\

Let $X$ be a nonempty set.  Denote by  $\lfloor X \rfloor$     the set of the formal elements $ \lfloor x \rfloor, x\in X$.
Write $\lfloor X\rfloor^{(0)}=X$,   $\lfloor X \rfloor^{(1)}=\lfloor X \rfloor$ and  for $n\geq 1$, put  $\lfloor X\rfloor^{(n+1)}=\lfloor \lfloor X\rfloor^{(n)}\rfloor$.  For $x\in X$, let $\lfloor x\rfloor^{(n)}$ be the corresponding element in $\lfloor X\rfloor^{(n)}$.
Define  $\calx$ to be the disjoint union of all  $\left \lfloor X\right \rfloor^{(n)}, n\geq 0$ and an operator $P_{\calx}$ on $\calx$ mapping $u\in \calx$ to $\left \lfloor u\right \rfloor$. Then  the pair $\calf^{\OpSet}_{\Set}(X):=(\calx, P_\calx)$ forms an operated set.  When $X$ is the empty set $\emptyset$, define $\calf^{\OpSet}_{\Set}(\emptyset)$ to be the empty set together with its identity map.

 For a map $ f: X\to Y$, introduce $\calf^{\OpSet}_{\Set}(f): (\calx, P_\calx) \to (\caly, P_\caly)$ to be the map sending $\lfloor x\rfloor^{(n)}$ with $x\in X$ to $\lfloor f(x)\rfloor^{(n)}$. It is easy to see that $\calf^{\OpSet}_{\Set}$ is a functor from $\Set $ to $\OpSet $.

\begin{prop}\label{Prop: Free object functor from sets to operated sets}
	The   functor $\calf^{\OpSet}_{\Set}:\Set \rightarrow \OpSet $ is left adjoint to the forgetful functor $\calu_{\Set}^{\OpSet}:\OpSet\rightarrow \Set$, hence giving the free operated set generated by a set.
\end{prop}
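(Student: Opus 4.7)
The plan is to verify the adjunction $\calf^{\OpSet}_{\Set} \dashv \calu_{\Set}^{\OpSet}$ by exhibiting a natural bijection
\[
\Hom_{\OpSet}\bigl((\calx, P_\calx),\, (Y, P_Y)\bigr) \;\cong\; \Hom_{\Set}\bigl(X,\, \calu_{\Set}^{\OpSet}(Y, P_Y)\bigr)
\]
that is natural in both variables. Equivalently, I will establish the universal property of $\calf^{\OpSet}_{\Set}(X)$: namely, for every operated set $(Y, P_Y)$ and every set map $f : X \to Y$, there exists a \emph{unique} morphism of operated sets $\widetilde{f} : (\calx, P_\calx) \to (Y, P_Y)$ whose restriction to $X = \lfloor X\rfloor^{(0)} \subset \calx$ coincides with $f$.

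The construction of $\widetilde{f}$ is forced by the definition of $\calx$. Since every element of $\calx$ has the form $\lfloor x\rfloor^{(n)}$ for a unique pair $(x, n) \in X \times \ZZ_{\geq 0}$, I will set
\[
\widetilde{f}(\lfloor x\rfloor^{(n)}) \;:=\; P_Y^{n}\bigl(f(x)\bigr), \qquad x \in X,\ n \geq 0,
\]
with the convention $P_Y^0 = \id_Y$. First I will check that $\widetilde{f}$ is a morphism of operated sets: for any $u = \lfloor x\rfloor^{(n)} \in \calx$, the definition of $P_\calx$ gives $P_\calx(u) = \lfloor x\rfloor^{(n+1)}$, so
\[
\widetilde{f}(P_\calx(u)) = P_Y^{n+1}(f(x)) = P_Y\bigl(P_Y^{n}(f(x))\bigr) = P_Y(\widetilde{f}(u)).
\]
Then I will verify that $\widetilde{f}|_{X} = f$, which is immediate from the $n=0$ case. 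For uniqueness, any morphism $g : (\calx, P_\calx) \to (Y, P_Y)$ with $g|_X = f$ must satisfy $g(\lfloor x\rfloor^{(n)}) = g(P_\calx^{n}(x)) = P_Y^{n}(g(x)) = P_Y^{n}(f(x))$ by induction on $n$, forcing $g = \widetilde{f}$. The case $X = \emptyset$ is handled separately and is trivial since $\calf^{\OpSet}_{\Set}(\emptyset)$ is the empty operated set.

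Finally, I will record naturality of the bijection $f \leftrightarrow \widetilde{f}$ in both $X$ and $(Y, P_Y)$; this is a routine check that composing with a set map $X' \to X$ on the left (resp. an operated-set morphism $(Y, P_Y) \to (Y', P_{Y'})$ on the right) commutes with the passage $f \mapsto \widetilde{f}$, and it follows by unraveling how $\calf^{\OpSet}_{\Set}$ acts on morphisms via $\lfloor x\rfloor^{(n)} \mapsto \lfloor f(x)\rfloor^{(n)}$. There is no serious obstacle here; the only small point to be careful about is that the description of $\calx$ as the disjoint union of the $\lfloor X\rfloor^{(n)}$ really does give each element a \emph{unique} representation $\lfloor x\rfloor^{(n)}$, which is what guarantees $\widetilde{f}$ is well defined on the nose.
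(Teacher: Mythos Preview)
Your proposal is correct and follows essentially the same approach as the paper: both define the extension by $\widetilde{f}(\lfloor x\rfloor^{(n)}) = P_Y^{n}(f(x))$ and verify the universal property. Your version is in fact more detailed than the paper's, which simply asserts that this map is clearly the unique operated-set morphism extending $f$, whereas you spell out the morphism check, the uniqueness induction, the empty-set case, and naturality.
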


  \begin{proof} Let $X$ be a set and $(Y, P_Y)$ be an operated set. Given a map $\theta: X\to Y$, we will show that there is a unique morphism of operated sets $\tilde{\theta}: \calf^{\OpSet}_{\Set}(X) \to Y$ which, when restricted to $X$, recovers $\theta$.

	In fact,  it suffices to    define the  map $\tilde{\theta}$ from $\calx$ to $Y$ by imposing  $\tilde{\theta}(  \lfloor x\rfloor^{(n)}):=P_Y^n(\theta(x))$ for any $x\in X$. Clearly, $\tilde{\theta}$ is the unique morphism of operated sets from $(\calx, P_\calx)$ to $(Y,  P_Y)$ whose restriction on $X$ is equal to $\theta$.

Therefore, $\calf^{\OpSet}_{\Set}$ is the free object functor from $\Set$ to $\OpSet$.
	\end{proof}


\subsection{Free object functor    from semigroups to operated semigroups}\

In this subsection, we will construct the  functor  $ \calf^{\OpSem}_{\Sem}:\Sem\rightarrow \OpSem $ left adjoint to the forgetful functor $ \calu^{\OpSem}_{\Sem}:\OpSem\rightarrow \Sem $.

\begin{defn}
	Let $ Z $ be a set.   Recall for a semigroup $T$,   $\calu^{\Sem}_\Set(T)$ is just the underlying set of $T$.  Write $\cals(T\sqcup Z):=\cals( \calu^{\Sem}_\Set(T)\sqcup Z)$   the free semigroup generated by the disjoint union $\calu^{\Sem}_{\Set}(T)\sqcup Z$.
	We define a  semigroup 	$\tilde{\cals}(T\sqcup Z)$ to be  the quotient of $\cals(T\sqcup Z)$ by identifying, for all   $t,t'\in T$, their      products      in $T$ and   in $\cals(T\sqcup Z)$.

\end{defn}
Notice that  if $ Z=\emptyset $,   $\tilde{\cals}(T\sqcup Z)$ is exactly   $T$ unchanged.

The following lemma will be useful in the sequel, whose easy proof is left to the reader.

\begin{lemma}\label{Lem: unique expression in Stilde} \begin{itemize}
		\item[(a)]
		Let $ Z $ be a set and $ T $ be a semigroup. Each element in $\tilde{\cals}(T\sqcup Z)$ has a unique expression
		$$a_0t_1a_1t_2\cdots t_na_n$$
		with
$n\geq 0$ and all  $t_i\in T$, where  for any $0\leq i\leq n$, $a_i$ lies in $\cals(Z)$ or is the empty word  $\varnothing$,  whereas when $n\geq 2$, neither of $a_1, \cdots, a_{n-1}$ can  be $\varnothing$ and when $n=0$,   $a_0$ is not   empty either.
		
		\item[(b)] Let $ Z\hookrightarrow Z' $ be an injective map of  sets and  $ T\hookrightarrow T' $ be an injective homomorphism of  semigroups.	Then the induced  homomorphism of semigroups
		$$\tilde{\cals}(T\sqcup Z)\to  \tilde{\cals}(T'\sqcup Z')$$
		is injective as well.
	\end{itemize}
\end{lemma}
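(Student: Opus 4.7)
The strategy is to prove part (a) first by constructing a semigroup $N$ of normal forms directly and showing it is isomorphic to $\tilde{\cals}(T\sqcup Z)$, and then to deduce part (b) from this explicit normal-form description together with functoriality of the free-semigroup construction.

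For (a), let $N$ denote the set of formal expressions $a_0 t_1 a_1 \cdots t_n a_n$ satisfying the stated constraints (all $t_i \in T$, each $a_i \in \cals(Z) \cup \{\varnothing\}$, with middle $a_1, \ldots, a_{n-1}$ non-empty and $a_0$ non-empty when $n=0$). Define a multiplication on $N$ by concatenation followed by at most one reduction: if in the concatenated expression two $T$-symbols become adjacent (which occurs precisely when $a_n$ of the left factor and $a_0$ of the right factor are both $\varnothing$), replace them by their product in $T$; otherwise keep the concatenation. One checks that the result always lies in $N$ and that this operation is associative, the only nontrivial case reducing to the associativity of $T$ when three $T$-symbols could become adjacent.

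Next, consider the obvious map $\phi: \cals(T \sqcup Z) \to N$ sending a word $x_1 x_2 \cdots x_m$ to the expression obtained by multiplying together each maximal run of consecutive $T$-letters. This is a semigroup homomorphism, and it identifies $t\cdot t'$ with $tt'$ for all $t,t'\in T$, so it factors through a semigroup homomorphism $\bar{\phi}: \tilde{\cals}(T \sqcup Z) \to N$. Conversely, the composition $\iota: N \hookrightarrow \cals(T\sqcup Z) \twoheadrightarrow \tilde{\cals}(T \sqcup Z)$ is the obvious inclusion. Since normal forms admit no further reduction, $\bar{\phi}\circ \iota$ fixes every element of $N$; and since $\iota\circ \bar{\phi}$ is the identity on generators and every element of $\tilde{\cals}(T\sqcup Z)$ is a product of generators, the two maps are mutually inverse, yielding $\tilde{\cals}(T \sqcup Z) \cong N$, which is (a).

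For (b), the induced homomorphism $F: \tilde{\cals}(T \sqcup Z) \to \tilde{\cals}(T' \sqcup Z')$ sends a normal form $a_0 t_1 a_1 \cdots t_n a_n$ to $a_0' t_1' a_1' \cdots t_n' a_n'$, where $t_i'$ is the image of $t_i$ in $T'$ and $a_i'$ is the image of $a_i$ in $\cals(Z')$. Because the free-semigroup functor preserves injectivity of maps of sets, $a_i \mapsto a_i'$ is injective and sends $\varnothing$ only to $\varnothing$; hence the target expression is again a normal form. Consequently, if $F(w)=F(w')$ for normal forms $w,w'$, the uniqueness statement (a) applied in $\tilde{\cals}(T' \sqcup Z')$ forces equal length and component-wise equality of images, and the injectivity of $T \hookrightarrow T'$ together with $\cals(Z) \hookrightarrow \cals(Z')$ then yields $w = w'$. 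The principal obstacle is the verification of associativity for the multiplication on $N$, including the case analysis needed to confirm that the concatenate-and-reduce procedure always produces an expression satisfying the normal-form constraints; once this is settled, the rest is routine bookkeeping.
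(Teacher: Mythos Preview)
Your argument is correct. The paper itself does not prove this lemma at all: it states that ``the easy proof is left to the reader,'' so there is nothing to compare against, and your normal-form construction is exactly the kind of verification the authors had in mind.

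One small point worth tightening: when you argue that $\iota\circ\bar\phi$ is the identity because it fixes generators, you are implicitly using that $\iota$ is a semigroup homomorphism, which you have not checked (the inclusion $N\hookrightarrow\cals(T\sqcup Z)$ is only a set map, since $N$ carries a different multiplication). The cleanest fix is to bypass this and observe directly that for any word $w\in\cals(T\sqcup Z)$ the defining relations of $\tilde{\cals}(T\sqcup Z)$ give $[w]=[\phi(w)]$, since collapsing a maximal $T$-run $t_{i_1}\cdots t_{i_k}$ to its product in $T$ is exactly a sequence of applications of the relation $t\cdot t'\sim tt'$. This shows $\iota\circ\bar\phi=\id$ without needing $\iota$ to be a homomorphism, and together with $\bar\phi\circ\iota=\id_N$ you get the desired bijection. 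Alternatively, you can verify that $\iota$ is a homomorphism by the same observation: the single reduction performed in $N$ is a defining relation of $\tilde{\cals}(T\sqcup Z)$.
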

\medskip

Let $T$ be a semigroup.  Deduced from Lemma~\ref{Lem: unique expression in Stilde}, the inclusion into the first component  $ T\hookrightarrow T\sqcup  \left \lfloor T\right \rfloor$ induces an injective  semigroup homomorphism
$$ i_{0,1}: \calf_0(T):=T \hookrightarrow \calf_1(T):=\tilde{\cals}(T\sqcup  \left \lfloor T\right \rfloor).
$$

For $ n\geq2 $, assume  that we have constructed $\calf_{n-2}(T)$ and $\calf_{n-1}(T)$ such that $\calf_{n-1}(T)=\tilde{\cals}(T\sqcup  \left \lfloor \calf_{n-2}(T)\right \rfloor) $  endowed  with  an  injective homomorphism of semigroups
$$
	  i_{n-2,n-1}: \calf_{n-2}(T) \hookrightarrow \calf_{n-1}(T).
$$
  We define the semigroup
$$\calf_{n}(T):=\tilde{\cals}(T\sqcup  \left \lfloor \calf_{n-1}(T)\right \rfloor)
$$
 and  again by Lemma~\ref{Lem: unique expression in Stilde}, the natural  injection
$$
	\Id_T\sqcup \left \lfloor i_{n-2,n-1}\right \rfloor :T\sqcup\left \lfloor \calf_{n-2}(T)\right \rfloor\hookrightarrow T\sqcup\left \lfloor \calf_{n-1}(T)\right \rfloor
$$
induces an injective semigroup  homomorphism
$$	i_{n-1,n}: \calf_{n-1}(T)=\tilde{\cals}(T\sqcup  \left \lfloor \calf_{n-2}(T)\right \rfloor) \hookrightarrow  \calf_{n}(T) =\tilde{\cals}(T\sqcup  \left \lfloor \calf_{n-1}(T)\right \rfloor).$$
 Define $  \calf(T)=\varinjlim \calf_{n}(T) $  and the map  sending  $u\in \calf_n(T)$ to $\left \lfloor u\right \rfloor\in \calf_{n+1}(T)$ induces an  operator $P_{\calf(T)}$  on $\calf(T)$.  By the limit construction, there are injective semigroup homomorphisms    $ i_n:\calf_{n}(T)\hookrightarrow\calf(T),  n\geq 0$ and $i:T\hookrightarrow\calf(T)$. Thus we have constructed a functor:
$$\begin{aligned}
		\calf^{\OpSem}_{\Sem}: &\Sem&\longrightarrow &\ \OpSem\\
		&T&\longmapsto&\ ( \calf(T),P_{\calf(T)}).
	\end{aligned}$$

\begin{prop}\label{Prop: free object functor from semigroups to operated semigroups}
	The   functor $\calf^{\OpSem}_{\Sem}:\Sem \rightarrow \OpSem $ is left adjoint to the forgetful functor $\calu_{\Sem}^{\OpSem}:\OpSem\rightarrow \Sem$.
\end{prop}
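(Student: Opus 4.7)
The plan is to establish the universal property of $\calf^{\OpSem}_{\Sem}(T) = (\calf(T), P_{\calf(T)})$, namely that for every semigroup $T$, every operated semigroup $(S, P_S)$, and every semigroup homomorphism $\theta \colon T \to \calu^{\OpSem}_{\Sem}(S)$, there exists a unique morphism of operated semigroups $\tilde{\theta} \colon (\calf(T), P_{\calf(T)}) \to (S, P_S)$ extending $\theta$ along the canonical inclusion $i \colon T \hookrightarrow \calf(T)$. Since $\calf(T) = \varinjlim \calf_n(T)$ is built as a directed colimit of semigroups along the injections $i_{n-1,n}$, it suffices to construct a compatible family of semigroup homomorphisms $\tilde{\theta}_n \colon \calf_n(T) \to S$ with $\tilde{\theta}_n \circ i_{n-1,n} = \tilde{\theta}_{n-1}$ and then pass to the colimit.

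I would proceed by induction on $n$. For the base case, set $\tilde{\theta}_0 := \theta \colon \calf_0(T) = T \to S$. For the inductive step, suppose $\tilde{\theta}_{n-1} \colon \calf_{n-1}(T) \to S$ has been constructed. Define a set map $\phi_n \colon T \sqcup \lfloor \calf_{n-1}(T) \rfloor \to S$ by sending $t \in T$ to $\theta(t)$ and $\lfloor u \rfloor$ to $P_S(\tilde{\theta}_{n-1}(u))$. By the universal property of $\cals(T \sqcup \lfloor \calf_{n-1}(T) \rfloor)$ from Section 1, $\phi_n$ extends uniquely to a semigroup homomorphism $\cals(T \sqcup \lfloor \calf_{n-1}(T)\rfloor) \to S$; this homomorphism factors through the quotient $\tilde{\cals}(T \sqcup \lfloor \calf_{n-1}(T) \rfloor) = \calf_n(T)$ precisely because $\theta$ is already a semigroup homomorphism on $T$, so the identifications defining $\tilde{\cals}$ are preserved. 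This yields $\tilde{\theta}_n \colon \calf_n(T) \to S$; compatibility $\tilde{\theta}_n \circ i_{n-1,n} = \tilde{\theta}_{n-1}$ follows from checking agreement on the generating set $T \sqcup \lfloor \calf_{n-2}(T)\rfloor$, where it is forced by the defining formulas.

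Passing to the colimit produces a semigroup homomorphism $\tilde{\theta} \colon \calf(T) \to S$ restricting to $\theta$ on $T$. To see that $\tilde{\theta}$ is an operated semigroup morphism, I would verify $\tilde{\theta} \circ P_{\calf(T)} = P_S \circ \tilde{\theta}$ levelwise: for $u \in \calf_{n-1}(T)$ one has $P_{\calf(T)}(u) = \lfloor u \rfloor \in \calf_n(T)$, and $\tilde{\theta}_n(\lfloor u \rfloor) = \phi_n(\lfloor u \rfloor) = P_S(\tilde{\theta}_{n-1}(u))$ by construction. For uniqueness, any operated semigroup morphism $\psi \colon \calf(T) \to S$ extending $\theta$ must send $\lfloor u \rfloor$ to $P_S(\psi(u))$, so a straightforward induction on $n$ shows $\psi$ agrees with $\tilde{\theta}$ on each $\calf_n(T)$, hence on all of $\calf(T)$.

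The only genuine subtlety lies in checking that $\phi_n$ descends from $\cals(T \sqcup \lfloor \calf_{n-1}(T)\rfloor)$ to the quotient $\tilde{\cals}(T \sqcup \lfloor \calf_{n-1}(T)\rfloor)$ and that the resulting $\tilde{\theta}_n$ is truly compatible with $i_{n-1,n}$; this is where Lemma~\ref{Lem: unique expression in Stilde} is implicitly used, guaranteeing unambiguous expressions so that the induced maps between the $\calf_n(T)$'s are honest semigroup homomorphisms and the colimit is well-behaved. Once this bookkeeping is in place, the universal property follows directly, yielding the claimed adjunction.
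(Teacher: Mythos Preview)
Your proposal is correct and follows essentially the same approach as the paper's proof: both construct a compatible family $\tilde{\theta}_n \colon \calf_n(T) \to S$ by induction via the universal property of free semigroups, check that the induced map on $\cals(T \sqcup \lfloor \calf_{n-1}(T)\rfloor)$ descends to the quotient $\tilde{\cals}$ because $\theta$ is multiplicative, and then pass to the colimit. Your version is in fact slightly more explicit than the paper's in verifying that the resulting $\tilde{\theta}$ intertwines the operators and in spelling out the induction for uniqueness.
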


  \begin{proof}  Let $T$ be a semigroup and  $(S, P_S)$ be an operated semigroup.
		Let $$\theta: T\to \calu_{\Sem}^{\OpSem}(S,P_S)= S$$ be a homomorphism of semigroups.
		We will construct a morphism of operated semigroups
		$$\tilde{\theta}: \calf^{\OpSem}_{\Sem}(T) \to (S,P_S)$$
		such that $\tilde{\theta}\circ i=\theta$. Moreover, it is unique.

		Let $\tilde{\theta}_0=\theta$.  Define a    map
		$\theta_1:T\sqcup  \left \lfloor T\right \rfloor\rightarrow S$  by sending  any element $t\in T$
		to $\theta(t)$ and $\left \lfloor t\right \rfloor$ to $ P_S(\theta(t))$.  The universal property of free semigroups gives a homomorphism of semigroups $\bar{\theta}_1: \cals(T\sqcup \left \lfloor T\right \rfloor) \to S$ extending $\theta_1$.  For any $t_1,t_2\in T$,  we have $$\bar{\theta}_{1}(t_1\cdot t_2)=\bar{\theta}_{1}(t_1)\cdot \bar{\theta}_{1}(t_2)=\theta(t_1)\cdot\theta( t_2)=\theta(t_1 \cdot  t_2)=\bar{\theta}_1(t_1 \cdot  t_2),$$ where  the first $\cdot$ is the multiplication in $\cals(T\sqcup \left \lfloor T\right \rfloor)$ and the last one in $T$, so there exists  an induced semigroup homomorphism $$\tilde{\theta}_{1}: \calf_{1}(T)=\tilde{\cals}(T\sqcup  \left \lfloor T\right \rfloor)\to S.$$

		Assume by induction that  we are given  homomorphisms of semigroups $\tilde{\theta}_k: \calf_{i}(T)\to S$ for $1\leq k\leq n$ such that
$\tilde{\theta}_k\circ i_{k-1, k}=\tilde{\theta}_{k-1}$.  Build a map
		$\theta_{n+1}: T\sqcup  \left \lfloor \calf_{n}(T)\right \rfloor\rightarrow S$  by sending any element $t\in T$
		to $\theta(t)$ and $\left \lfloor v\right \rfloor$ to $P_S(\tilde{\theta}_n(v))$ for arbitrary $v\in \calf_{n}(T)$.
		By the universal property of free semigroups, $ \theta_{n+1}$   induces   a homomorphism of semigroups
 $\bar{\theta}_{n+1}: \cals(T\sqcup  \left \lfloor \calf_{n}(T)\right \rfloor)\to S$. For any $t_1,t_2\in T$,  we have  $$\bar{\theta}_{n+1}(t_1\cdot t_2)=\bar{\theta}_{n+1}(t_1)\cdot \bar{\theta}_{n+1}(t_2)= \theta (t_1)\cdot \theta (t_2)= \theta (t_1\cdot t_2)=\bar{\theta}_{n+1}(t_1\cdot t_2),$$ where  the first $\cdot$ is the multiplication in $\cals(T\sqcup  \left \lfloor \calf_{n}(T)\right \rfloor$ and the last one in $T$, so there exists  an induced semigroup homomorphism
 $$\tilde{\theta}_{n+1}: \calf_{n+1}(T)=\tilde{\cals}(T\sqcup  \left \lfloor \calf_{n}(T)\right \rfloor)\to S.$$
		
		Hence,   we  have constructed  a series of semigroup homomorphisms $\tilde{\theta}_k: \calf_k(T)\rightarrow S, k\geqslant 0 $ which fit into the commutative diagram:
		$$
\xymatrixrowsep{3pc}\xymatrixcolsep{4pc}\xymatrix{T\ar[r]^{\theta}\ar@{=}[d] & S&  \\
		\calf_0(T)\ar@{-->}[ur]_<(.5){ \tilde{\theta}_0=\theta}\ar@{^{(}->}[r]^{i_{0,1}} &\calf_1(T)\ar@{-->}[u]_{\exists\tilde{\theta}_1}\ar@{^{(}->}[r]^{i_{1,2}}&\calf_2(T)\ar@{-->}[ul]_<(.2){ \exists \tilde{\theta_2}}\ar@{^{(}->}[r]^{i_{2,3}} &\cdots\ar@{^{(}->}[r]&\calf(T)\ar@{-->}[lllu]_{ \exists\tilde{\theta}}}\nonumber
$$
		The limit construction gives  a homomorphism of semigroups $ \tilde{\theta} $ from $ \calf(T) $ to $ S $, which
		by construction commutes with the operators, thus  $\tilde{\theta} $ is a morphism of operated semigroups.
		
		The uniqueness of $\tilde{\theta}$ is clear.
		
		We are done.


\end{proof}
\subsection{Free object functor from monoids to operated monoids}\

Recall that $\calm(X):=\calf^{\Mon}_{\Set}(X)$ is  the free monoid generated by set $X$.
\begin{defn}
	Let $ Z $ be a set and $ T $ be a semigroup (resp. monoid). Write $\calm(T\sqcup Z):=\calm( \calu^{\Sem}_\Set(T)\sqcup Z)$ (resp. $\calm( \calu^{\Mon}_{\Set}(T)\sqcup Z)$) for  the free monoid generated by the disjoint union $\calu^{\Sem}_{\Set}(T)\sqcup Z$ (resp. $\calu^{\Mon}_{\Set}(T)\sqcup Z$).
	
	When $T$ is a semigroup, we define a   monoid 	$\tilde{\calm}(T\sqcup Z)$ to be  the quotient of $\calm(T\sqcup Z)$ by identifying, for all   $t,t'\in T$, their      products      in $T$ and   in $\calm(T\sqcup Z)$;
	when $T$ is a monoid and $e$ is its unit,  we need to identify, furthermore,   $e$ with  the unit of    $\calm(T\sqcup Z)$.

\end{defn}

Notice that  if $ Z=\emptyset $,   $\tilde{\calm}(T\sqcup Z)$ is   $\calf^{\Mon}_{\Sem}(T)$ (i.e. just adding a unit to $T$) when $T$ is  a semigroup     and when $T$ is a monoid, it is still $T$ itself.

Similar to Lemma~\ref{Lem: unique expression in Stilde},  the obvious proof of the following lemma is left to the reader.
\begin{lemma}\label{Lem: unique expression in Mtilde} \begin{itemize}
		\item[(1)]
		Let $ Z $ be a set and $ T $ be a semigroup or a monoid. Any element in $\tilde{\calm}(T\sqcup Z)$ possesses  a unique expression
		$$a_0t_1a_1t_2\cdots t_na_n$$
		with $n\geq 0$, where  for $0\leq i\leq n$, $a_i\in \calm(Z)$, but  for each  $1\leq i\leq n-1$, $a_i\in \cals(Z)$; for $1\leq i\leq n$, $t_i$ belongs to  $T$,  but neither of them  is   the unit of $T$ when $T$ is a monoid.
		
		\item[(2)] Let $ Z\hookrightarrow Z' $  (resp. $ T\hookrightarrow T' $)   be an injective map  of  sets (resp. an injective homomorphism of  semigroups or   monoids).	Then the induced  homomorphism
		$$\tilde{\calm}(T\sqcup Z)\to  \tilde{\calm}(T'\sqcup Z')$$
		is injective, too.
	\end{itemize}
\end{lemma}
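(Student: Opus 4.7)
The plan is to mirror the strategy that proves Lemma~\ref{Lem: unique expression in Stilde}, with the extra ingredient needed to handle the unit when $T$ is a monoid.

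For part (1), I would first establish existence by an elementary reduction. Starting from any word of $\calm(T\sqcup Z)$, I scan left to right and repeatedly (a) replace any maximal factor $t_{i_1}t_{i_2}\cdots t_{i_k}$ of consecutive letters from $T$ by the single element $t_{i_1}t_{i_2}\cdots t_{i_k}\in T$ computed in $T$, and (b) when $T$ is a monoid, delete any occurrence of $e_T$ (the unit of $T$ viewed as a letter in $T\sqcup Z$), then re-fuse the two neighboring $T$-blocks that now became adjacent, and iterate. Each step strictly decreases the number of letters from $T\sqcup Z$, so the procedure terminates in the desired form $a_0t_1a_1\cdots t_na_n$ subject to the stated restrictions on the $a_i$ and $t_i$.

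For uniqueness, the cleanest route is to construct the monoid directly on the set $N$ of formal normal forms. I define a product on $N$ by juxtaposition followed by a fixed reduction procedure: concatenate $a_0t_1a_1\cdots t_na_n$ with $b_0s_1b_1\cdots s_mb_m$; if $a_n$ and $b_0$ are both empty, replace $t_ns_1$ by their product in $T$, and in the monoid case, if this product equals $e_T$ remove it and continue the cascade between the now-adjacent $T$-blocks. The empty word is the unit, and the word with $n=0$, $a_0=\varnothing$ (monoid case) is nothing but $e_N$. Associativity is verified by a routine case analysis on which of the abutting $a$- and $b$-boundaries are empty and whether a cascade of cancellations occurs; alternatively one can package everything as a terminating, confluent rewriting system on $\calm(T\sqcup Z)$ with rules $t\cdot t'\to (tt')_T$ (and $e_T\to\varnothing$ in the monoid case) and invoke the diamond/Newman lemma after checking that the few critical pairs — essentially the overlaps of these two rules among three consecutive $T$-letters — are joinable. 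In either version, the obvious map $T\sqcup Z\to N$ extends to a monoid morphism $\calm(T\sqcup Z)\to N$ that annihilates the defining relations of $\tilde{\calm}(T\sqcup Z)$, hence factors through $\tilde{\calm}(T\sqcup Z)$; composed with the evident set-theoretic section $N\to\tilde{\calm}(T\sqcup Z)$ (sending a normal form to its class), this gives a bijection and proves uniqueness.

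Part (2) is then a short corollary. The inclusions $Z\hookrightarrow Z'$ and $T\hookrightarrow T'$ induce a map of free monoids $\calm(T\sqcup Z)\to \calm(T'\sqcup Z')$ that respects the defining relations (products in $T$ go to products in $T'$, and since a monoid homomorphism preserves and reflects the identity, $e_T$ maps to $e_{T'}$), hence descends to $\tilde{\calm}(T\sqcup Z)\to \tilde{\calm}(T'\sqcup Z')$. Applied to the unique normal form $a_0t_1a_1\cdots t_na_n$ of an element, this descended map yields the word $a_0't_1'a_1'\cdots t_n'a_n'$ obtained by applying the inclusions letterwise; by injectivity each $a_i'$ still lies in $\calm(Z')$ or $\cals(Z')$ as prescribed, each $t_i'\in T'$ remains non-identity, so the image is already in the normal form of part~(1) for $\tilde{\calm}(T'\sqcup Z')$. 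Distinct normal forms on the source therefore produce distinct normal forms in the target, proving injectivity.

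The main obstacle is, as always in such ``obvious'' normal form lemmas, the bookkeeping behind uniqueness: in the monoid case one must be careful that the cancellation $tt'=e_T$ may cause a second fusion with the next $T$-block, which in turn can produce another $e_T$, and so on. Either the direct associativity check on $N$ must handle this cascade explicitly, or one must verify joinability of the critical pair between the product rule and the unit-elimination rule when three elements of $T$ are juxtaposed; once this is dispatched, both (1) and (2) follow routinely.
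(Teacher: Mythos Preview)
Your proposal is correct and constitutes a complete argument. The paper, however, gives no proof at all for this lemma: it simply states that ``the obvious proof of the following lemma is left to the reader,'' pointing to the analogous Lemma~\ref{Lem: unique expression in Stilde} (whose proof is likewise omitted). So there is nothing in the paper to compare your argument against beyond the implicit claim that it is routine.

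That said, your write-up supplies exactly the kind of argument the authors presumably had in mind, and in fact gives more detail than ``obvious'' suggests. The rewriting-system packaging (rules $t\cdot t'\to (tt')_T$ and $e_T\to\varnothing$, termination plus local confluence via Newman's lemma) is the cleanest way to handle the cascade of cancellations you flag in the monoid case, and is preferable to a bare-hands associativity check on $N$. One small wording issue: a monoid homomorphism does not in general \emph{reflect} the identity; what you actually use in part~(2) is that an \emph{injective} monoid homomorphism sends non-identity elements to non-identity elements, which is immediate from injectivity together with preservation of the unit. With that tweak, your argument is sound and fills the gap the paper leaves.
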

\medskip

Let $M$ be a monoid.   The free operated monoid $\calf_{\Mon}^{\OpMon}(M)$  generated by $M$ can be built   exactly as is done in the preceding subsection    except that   $\tilde{\cals}$ is replaced by $\tilde{\calm}$ defined above  and we can show the    functor $\calf_{\Mon}^{\OpMon}:\Mon \rightarrow \OpMon $ is left adjoint to the forgetful functor $\calu_{\Mon}^{\OpMon}:\OpMon\rightarrow \Mon$.

\subsection{Free object functor    from operated sets  to operated semigroups} \

In this subsection, we will construct the functor   $ \calf^{\OpSem}_{\OpSet}:\OpSet\rightarrow \OpSem $ left adjoint to  the forgetful functor $ \calu_{\OpSet}^{\OpSem}:\OpSem\rightarrow \OpSet $.  Its construction   is a little more involved than as in the preceding subsections.


  Let $ (X,P_X)$ be an operated set and $\calf_0(X):=\cals(X)$. Consider the set $  \cals(X)\backslash X $ obtained by deleting $X$ from $\cals(X)$  and let $ \calf_1(X):=\cals(X\sqcup  \left \lfloor \cals(X)\backslash X\right \rfloor).$
The natural inclusion $ X\hookrightarrow X\sqcup  \left \lfloor \cals(X)\backslash X\right \rfloor$, after  applying the free semigroup functor $\calf^{\Sem}_{\Set}$,   gives  a homomorphism of semigroups
$$ i_{0,1}: \calf_0(X)=\cals(X)  \hookrightarrow \calf_1(X)=\cals(X\sqcup  \left \lfloor \cals(X)\backslash X\right \rfloor).
$$

For $ n\geq 2 $,  suppose  given  a sequence of injective semigroup  homomorphisms
$$\calf_0(X)\overset{i_{0,1}}{\hookrightarrow}\calf_1(X)\overset{i_{1,2}}{\hookrightarrow} \cdots \overset{i_{n-3,n-2}}{\hookrightarrow} \calf_{n-2}(X)\overset{i_{n-2,n-1}}{\hookrightarrow}\calf_{n-1}(X)
$$ with $\calf_{k}(X) =\cals(X\sqcup  \left \lfloor \calf_{k-1}(X)\backslash X\right \rfloor)$ for all $1\leq k\leq n-1$,
    define as above  the semigroup
\begin{equation}\label{key}
\nonumber \calf_{n}(X):=\cals(X\sqcup  \left \lfloor \calf_{n-1}(X)\backslash X\right \rfloor).
\end{equation}
Deduced from the natural  injection
$$
\Id_X\sqcup \left \lfloor i_{n-2,n-1}|_{\calf_{n-2}(X)\backslash X}\right \rfloor:X\sqcup\left \lfloor \calf_{n-2}(X)\backslash X\right \rfloor\hookrightarrow X\sqcup\left \lfloor \calf_{n-1}(X)\backslash X\right \rfloor,$$
we obtain  an injective semigroup  homomorphism
\begin{equation}\label{key}
\nonumber i_{n-1,n}: \calf_{n-1}(X) =\cals(X\sqcup  \left \lfloor \calf_{n-2}(X)\backslash X\right \rfloor)\hookrightarrow \calf_{n}(X) =\cals(X\sqcup  \left \lfloor \calf_{n-1}(X)\backslash X\right \rfloor).
\end{equation}
Finally,  we define $$\calf(X):=\varinjlim \calf_{n}(X).$$ Obviously $\calf(X)$ is a semigroup.  By the limit construction, there are injective semigroup homomorphisms    $ i_n:\calf_{n}(X)\hookrightarrow\calf(X),  n\geq 0$ and $i:X\hookrightarrow\calf(X)$.
The  operator $P_{\calf(X)}$ over $\calf(X)$  can be constructed as follows:
$$\label{key}
    P_{\calf(X)}(u):=
\left\{
\begin{array}{lcl}
	i(P_X(x)) &\text{if} &u=i(x), \  \text{for some}\  x\in X, \\
	 i_{k+1}(\left\lfloor u'\right\rfloor) & \text{if} & u \notin i(X),\ \text{but}\ u=i_k(u'),\  \text{for some}\  u'\in \calf_k(X).
\end{array}
\right. $$
Then $(\calf(X),P_{\calf(X)})$ forms an operated semigroup. Now it is obvious that $i:(X, P_X)\to (\calf(X),  P_{\calf(X)})$ is a morphism of operated sets.

 Moreover, it is ready to see that the construction above is functorial. 

 We have constructed a functor:
\begin{equation}\label{key}
\nonumber
\begin{aligned}
	\calf^{\OpSem}_{\OpSet}: &\OpSet&\longrightarrow &\ \OpSem\\
	&(X,P_X)&\longmapsto&\ ( \calf(X),P_{\calf(X)}).
\end{aligned}
\end{equation}

\medskip

Now, we are going to show the main result of this subsection.
\begin{prop}\label{Prop: Free object functor from operated sets to operated semigroups}
	The   functor $\calf^{\OpSem}_{\OpSet}:\OpSet \rightarrow \OpSem $ is left adjoint to the forgetful functor $\calu_{\OpSet}^{\OpSem}:\OpSem\rightarrow \OpSet$.
\end{prop}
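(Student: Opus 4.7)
The plan is to mimic the strategy used in the proof of Proposition~\ref{Prop: free object functor from semigroups to operated semigroups}, namely to build the adjoint map level by level on the filtration $\calf_0(X) \hookrightarrow \calf_1(X) \hookrightarrow \cdots$ and then pass to the limit. Given a morphism of operated sets $\theta:(X,P_X)\to(S,P_S)$, I intend to construct a semigroup homomorphism $\tilde{\theta}_n:\calf_n(X)\to S$ for each $n\geq 0$ such that $\tilde{\theta}_n\circ i_{n-1,n}=\tilde{\theta}_{n-1}$, and then take the colimit to obtain a morphism of operated semigroups $\tilde{\theta}:(\calf(X),P_{\calf(X)})\to(S,P_S)$ with $\tilde{\theta}\circ i=\theta$.

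The base case is immediate: since $\calf_0(X)=\cals(X)$ is the free semigroup on $X$, the universal property produces a unique semigroup homomorphism $\tilde{\theta}_0:\cals(X)\to S$ extending $\theta$. For the inductive step, assume $\tilde{\theta}_{n-1}:\calf_{n-1}(X)\to S$ is already built. Define a set map
$$\vartheta_n:X\sqcup\lfloor\calf_{n-1}(X)\setminus X\rfloor\longrightarrow S,\qquad x\mapsto\theta(x),\quad \lfloor u\rfloor\mapsto P_S(\tilde{\theta}_{n-1}(u))$$
and extend it via the universal property of the free semigroup $\calf_n(X)=\cals(X\sqcup\lfloor\calf_{n-1}(X)\setminus X\rfloor)$ to a semigroup homomorphism $\tilde{\theta}_n$. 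The compatibility $\tilde{\theta}_n\circ i_{n-1,n}=\tilde{\theta}_{n-1}$ is checked on the generators $X\sqcup\lfloor\calf_{n-2}(X)\setminus X\rfloor$ of $\calf_{n-1}(X)$: on $X$ both sides equal $\theta$, and on $\lfloor u\rfloor$ with $u\in\calf_{n-2}(X)\setminus X$ both sides equal $P_S(\tilde{\theta}_{n-2}(u))=P_S(\tilde{\theta}_{n-1}(i_{n-2,n-1}(u)))$ by the induction hypothesis. Taking the colimit yields a semigroup homomorphism $\tilde{\theta}:\calf(X)\to S$.

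Next I would verify that $\tilde{\theta}$ is compatible with the operators, splitting according to the definition of $P_{\calf(X)}$. For $u=i(x)$ with $x\in X$, using that $\theta$ is a morphism of operated sets,
$$\tilde{\theta}(P_{\calf(X)}(i(x)))=\tilde{\theta}(i(P_X(x)))=\theta(P_X(x))=P_S(\theta(x))=P_S(\tilde{\theta}(i(x))).$$
For $u=i_k(u')$ with $u'\in\calf_k(X)\setminus X$, the construction gives
$$\tilde{\theta}(P_{\calf(X)}(u))=\tilde{\theta}_{k+1}(\lfloor u'\rfloor)=P_S(\tilde{\theta}_k(u'))=P_S(\tilde{\theta}(u)),$$
which is exactly the defining relation used to pass $\vartheta_{k+1}$ through $\lfloor\cdot\rfloor$.

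Finally, uniqueness is forced: any morphism of operated semigroups $\tilde{\theta}:(\calf(X),P_{\calf(X)})\to(S,P_S)$ extending $\theta$ must coincide with $\theta$ on $X$, hence with $\tilde{\theta}_0$ on $\cals(X)$, and then on each $\lfloor\calf_{n-1}(X)\setminus X\rfloor$ it must send $\lfloor u\rfloor$ to $P_S(\tilde{\theta}(u))$ by operator-compatibility, which by the semigroup homomorphism property determines it on all of $\calf_n(X)$; iterating and passing to the colimit pins it down uniquely. The only mildly delicate point is bookkeeping the two-case definition of $P_{\calf(X)}$ against the two-sort generating set $X\sqcup\lfloor\calf_{n-1}(X)\setminus X\rfloor$, but once one observes that the ``$X$-part'' is handled by $\theta\circ P_X=P_S\circ\theta$ and the ``$\lfloor\calf_{n-1}(X)\setminus X\rfloor$-part'' is built into $\vartheta_n$, the argument goes through cleanly.
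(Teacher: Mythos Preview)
Your proof is correct and follows essentially the same approach as the paper's own proof: build $\tilde{\theta}_n$ level by level via the universal property of free semigroups on $X\sqcup\lfloor\calf_{n-1}(X)\setminus X\rfloor$, check compatibility with the transition maps, and pass to the colimit. In fact you provide more detail than the paper, which simply asserts ``by construction $\tilde{\theta}$ is a morphism of operated semigroups'' and ``the uniqueness of $\tilde{\theta}$ is clear,'' whereas you spell out the two-case verification of operator compatibility and the inductive uniqueness argument explicitly.
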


\begin{proof}  Let $(T, P_T)$ be an operated semigroup.
Let $$\theta:   (X,P_X)\to \calu_{\OpSet}^{\OpSem}(T,P_T))=(T, P_T)$$ be a morphism of operated sets.
We will construct a morphism of operated semigroups
$$\tilde{\theta}: \calf^{\OpSem}_{\OpSet}(X,P_X) \to (T,P_T)$$
such that $\tilde{\theta}\circ i=\theta$. Moreover, it is unique.

By the universal property of free semigroup construction, we have a homomorphism of semigroups
$\tilde{\theta}_0: \calf_0(X)=\cals(X)\rightarrow T$ extending $\theta$.  Now consider the map
$\bar{\theta}_1: X\sqcup \left \lfloor \cals(X) \backslash X \right \rfloor\to T$ by sending any element $x\in X$
		to $\theta(x)$ and $\left \lfloor u\right \rfloor$ to $P_T(\tilde{\theta}_0(u))$ for arbitrary $u\in \cals(X) \backslash X$, which induces
$\tilde{\theta}_1: \calf_1(X)=\cals( X\sqcup \left \lfloor \cals(X) \backslash X\right \rfloor)\to T$ such that $\tilde{\theta}_1 \circ i_{0,1}=\tilde{\theta}_0$.

 By iterating this process, we   found  a series of homomorphisms of semigroups $\tilde{\theta}_k: \calf_k(X)\rightarrow T, k\geqslant 0 $ which fit into the commutative diagram:
$$
\xymatrixrowsep{3pc}\xymatrixcolsep{4pc}\xymatrix{X\ar[r]^{\theta}\ar@{^{(}->}[d] & T&  \\
		\calf_0(X)\ar@{-->}[ur]_<(.5){\exists \tilde{\theta}_0}\ar@{^{(}->}[r]^{i_{0,1}} &\calf_1(X)\ar@{-->}[u]_{\exists \tilde{\theta}_1}\ar@{^{(}->}[r]^{i_{1,2}}&\calf_2(X)\ar@{-->}[ul]_<(.2){ \exists  \tilde{\theta_2}}\ar@{^{(}->}[r]^{i_{2,3}} &\cdots\ar@{^{(}->}[r]&\calf(X)\ar@{-->}[lllu]_{\exists  \tilde{\theta}}}\nonumber
$$
  It follows that there exists a homomorphism of semigroups $ \tilde{\theta} $ from $ \calf(X) $ to $ T $ and
  by construction $\tilde{\theta} $ is a morphism of operated semigroups.

  The uniqueness of $\tilde{\theta}$ is clear.

  This completes the proof.
\end{proof}

\subsection{Free object functor  from operated semigroups to operated monoids }\

In this subsection, we will construct the  functor  $ \calf^{\OpMon}_{\OpSem}:\OpSem\rightarrow \OpMon $ left adjoint to the forgetful functor $ \calu^{\OpMon}_{\OpSem}:\OpMon\rightarrow \OpSem $.

  Let $ (T,P_T) $ be an operated semigroup, $ \calf_0(T):= \tilde{\calm}(T\sqcup \varnothing)=\tilde{\calm}(T)$ be the monoid defined before. By Lemma~\ref{Lem: unique expression in Mtilde}, the natural inclusion $T\hookrightarrow T\sqcup  \left \lfloor \calf_0(T)\backslash T\right \rfloor$ induces an   injective monoid  homomorphism
$$ i_{0,1}: \calf_0(T) =\tilde{\calm}(T) \hookrightarrow \calf_1(T):=\tilde{\calm}(T\sqcup  \left \lfloor \calf_0(T)\backslash T\right \rfloor).
$$  For $ n\geq2 $, by induction starting from a series of  injective monoid  homomorphisms
$$\calf_0(T) )\overset{i_{0,1}}{\hookrightarrow}\calf_1(T)\overset{i_{1,2}}{\hookrightarrow} \cdots \overset{i_{n-3,n-2}}{\hookrightarrow} \calf_{n-2}(T)\overset{i_{n-2,n-1}}{\hookrightarrow}\calf_{n-1}(T)
$$ with $\calf_{k}(T) =\tilde{\calm}(T\sqcup  \left \lfloor \calf_{k-1}(T)\backslash T\right \rfloor)$ for all $1\leq k\leq n-1$,
one   imposes
\begin{equation}\label{key}
	\nonumber \calf_{n}(T):=\tilde{\calm}(T\sqcup  \left \lfloor \calf_{n-1}(T)\backslash T\right \rfloor).
\end{equation}
Again by Lemma~\ref{Lem: unique expression in Mtilde},    from the injection
\begin{equation}\label{key}
	\Id_T\sqcup \left \lfloor i_{n-2,n-1}|_{\calf_{n-2}(T)\backslash T}\right \rfloor:T\sqcup\left \lfloor \calf_{n-2}(T)\backslash T\right \rfloor\hookrightarrow T\sqcup\left \lfloor \calf_{n-1}(T)\backslash T\right \rfloor,\nonumber
\end{equation}
one can build an injective monoid  homomorphism
$$i_{n-1,n}: 	\calf_{n-1}(T) =\tilde{\calm}(T\sqcup  \left \lfloor \calf_{n-2}(T)\backslash T\right \rfloor) \hookrightarrow  \calf_{n}(T) =\tilde{\calm}(T\sqcup  \left \lfloor \calf_{n-1}(T)\backslash T\right \rfloor).
$$

Now we impose  $\calf(T):=\varinjlim \calf_{n}(T)$. Notice that there are natural induced injective monoid homomorphisms $ i_n:\calf_{n}(T)\hookrightarrow\calf(T),n\geq 0 $ and $i:T\hookrightarrow\calf(T)$.
Hence, we obtain  an operated monoid
\begin{equation}\label{key}
	\nonumber \left( \calf(T), P_{\calf(T)}(u):=
	\left\{
	\begin{array}{lcl}
		i(P_T(t)) &\text{if} &u=i(t), t \in T \\
		 i_{k+1}(\left\lfloor u' \right\rfloor) &\text{if} &u \notin  i(T),\ \text{but}\ u=i_k(u'), \ \text{for some}\ u'\in \calf_k(T)
	\end{array}
	\right.  \right).
\end{equation}
 It is ready to see that this forms a functor
\begin{equation}\label{key}
	\nonumber
	\begin{aligned}
		\calf^{\OpMon}_{\OpSem}:\  &\OpSem&\longrightarrow &\ \OpMon\\
		&(T,P_T)&\longmapsto&\ ( \calf(T),P_{\calf(T)}).
	\end{aligned}
\end{equation}
and it
  is left adjoint to the forgetful functor $ \calu^{\OpMon}_{\OpSem}:\OpMon\rightarrow \OpSem $. The details are left to the reader.

\subsection{Composites of free object functors} \


 It is easy to check that the composite  $\calf^{\OpSem}_{\Set}=\calf^{\OpSem}_{\Sem}\circ\calf^{\Sem}_{\Set}$  gives exactly the free operated semigroup generated by a set,  whose construction firstly appeared in \cite{Guo09a}.
 Although $\calf^{\OpSem}_{\Set}$ is naturally isomorphic to $ \calf^{\OpSem}_{\OpSet}\circ\calf^{\OpSet}_{\Set}$, the construction of the latter seems to be much more complicated.

 Similarly,  $\calf^{\OpMon}_{\Set}=\calf^{\OpMon}_{\Mon}\circ\calf^{\Mon}_{\Set}$ is exactly the free operated monoid generated by a set as constructed  in \cite{Guo09a}, whose another realisation $\calf^{\OpMon}_{\OpSet}\circ\calf^{\OpSet}_{\Set}$
  is  much more involved, however.

 In the sequel, following \cite{Guo09a},  denote $$(\frakS(X),P_{\frakS(X)}):=\calf^{\OpSem}_{\Set}(X)=\calf^{\OpSem}_{\Sem}(\cals(X))$$
 and
 $$  (\frakM(X),P_{\frakM(X)}):=\calf^{\OpMon}_{\Set}(X)=\calf^{\OpMon}_{\Mon}(\calm(X)). $$

\section{Free object functors III}


Parallel with the last section, we will  consider the back face of the diagram in Introduction:
  $$\xymatrixcolsep{5pc}\xymatrixrowsep{5pc}
\xymatrix{ \OpVect \ar@<1ex>[r]^{\calf^{\OpAlg}_{\OpVect}}    \ar@<1ex>[d]^{\calu^{\OpVect}_{\Vect}}
& \OpAlg  \ar@<1ex>[d]^{\calu^{\OpAlg}_{\Alg}}  \ar@<1ex>[l]^{\calu_{\OpVect}^{\OpAlg}}  \ar@<1ex>[r]^{\calf^{\uOpAlg}_{\OpAlg}}
& \uOpAlg \ar@<1ex>[l]^{\calu_{\OpAlg}^{\uOpAlg}}   \ar@<1ex>[d]^{\calu^{\uOpAlg}_{\uAlg}} \\
  \Vect  \ar@<1ex>[r]^{\calf^{\Alg}_{\Vect}}     \ar@<1ex>[u]^{\calf_{\Vect}^{\OpVect}}
&  \Alg   \ar@<1ex>[l]^{\calu_{\Vect}^{\Alg}}\ar@<1ex>[r]^{\calf^{\uAlg}_{\Alg}}  \ar@<1ex>[u]^{\calf_{\Alg}^{\OpAlg}}
&\uAlg \ar@<1ex>[u]^{\calf_{\uAlg}^{\uOpAlg}}  \ar@<1ex>[l]^{\calu_{\Alg}^{\uAlg}}}$$
Our goal in this section is to construct the free object functors $\calf^*_*$ which are left adjoint to the corresponding forgetful functors $\calu^*_*$.

Since all the proofs in this section are similar to those in the previous sections, we omit all of them.




 \subsection{Free object functor from vector spaces  to operated vector spaces}\

	Let $V$ be a $\bk$-space with a basis $X$. Extend the map $\lfloor ~ \rfloor:X\rightarrow\lfloor X \rfloor$ to a linear map $\lfloor ~ \rfloor:V\rightarrow\lfloor V\rfloor:=\bk\lfloor X \rfloor$, where by abuse of notations, we  use the same symbol for the induced linear map. It is easy to see that this definition is independent of the choice of basis. Similarly, we can define $\left \lfloor V\right \rfloor^{(n)}$ and $\left \lfloor v\right \rfloor^{(n)}$ for $n\geq 0$ and  $v\in V$.
	Define  $\calv:=\bigoplus_{n \geq 0}\left \lfloor V\right \rfloor^{(n)}$ and an operator $P_{\calv}$ on $\calv$ mapping $\lfloor v\rfloor^{(n)}$  to  $\lfloor v\rfloor^{(n+1)}$ for  all  $v\in V$ and $n\geq 0$. Then  the pair $\calf^{\OpVect}_{\Vect}(V):=(\calv, P_\calv)$ forms an operated $\bk$-space.   In fact,   it can be seen that  $ (\calv, P_\calv)=\calf^{\OpVect}_{\OpSet}\circ \calf^{\OpSet}_{\Set}(X)$.

For a linear map $ f: V\to W$, introduce $\calf^{\OpVect}_{\Vect}(f): (\calv, P_\calv) \to (\calw, P_\calw)$ to be the linear map sending $\lfloor v\rfloor^{(n)}$   to $\lfloor f(v)\rfloor^{(n)}$ for  all $n\geq 0$ and   $v\in V$.  It is easy to see that $\calf^{\OpVect}_{\Vect}$ is a functor from $\Vect $ to $\OpVect $.

		It is ready to see that the   functor $\calf^{\OpVect}_{\Vect}:\Vect \rightarrow \OpVect$ is left adjoint to the forgetful functor $\calu_{\Vect}^{\OpVect}:\OpVect\rightarrow \Vect$, hence giving the free operated $\bk$-space generated by a $\bk$-space.


\subsection{Free object functor from algebras to operated algebras}\

In this subsection, we will construct the functor   $ \calf^{\OpAlg}_{\Alg}:\Alg\rightarrow \OpAlg $ left adjoint to  the forgetful functor $ \calu_{\Alg}^{\OpAlg}:\OpAlg\rightarrow \Alg $.

\begin{defn}
	Let $ V$ be a $\bk$-space, $ A $ be a $\bk$-algebra. We define the following quotient $\bk$-algebra:
	$$\tilde{\bar{\TT}}(A\oplus V):=\bar{\TT}(A\oplus V)\slash\left\langle a\otimes b-a\cdot b~|~\forall a,b\in A\right\rangle ,$$
	where $ \otimes $ and $  \cdot $ are respectively the multiplications in  $\bar{\TT}(A\oplus V)$ and $A$. In particular, if $ V=0 $, then $ \tilde{\bar{\TT}}(A)=\bar{\TT}(A)\slash\left\langle a\otimes b-a\cdot b~|~\forall a,b\in A\right\rangle=A $.
\end{defn}

\begin{lemma}\label{Lem: injective of algebra}
	Let $V\hookrightarrow V'$ be an injective linear map, $A\hookrightarrow B$ be a monomorphism of $\bk$-algebras. Then the induced homomorphism of algebras
	$$\tilde{\bar{\TT}}(A\oplus V)\rightarrow \tilde{\bar{\TT}}(B\oplus V')$$
	is injective.
	\end{lemma}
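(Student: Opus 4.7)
The plan is to mimic Lemma~\ref{Lem: unique expression in Stilde}(a) and Lemma~\ref{Lem: unique expression in Mtilde}(2) at the algebra level: first establish a normal-form vector-space decomposition of $\tilde{\bar{\TT}}(A\oplus V)$ as a direct sum of alternating tensor words in $A$ and $V$, and then appeal to the standard fact that over a field the tensor product of injective $\bk$-linear maps is injective.

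Concretely, I introduce the auxiliary $\bk$-space
$$N(A,V) \;:=\; \bigoplus_{n\geq 1}\;\bigoplus_{\substack{\varepsilon\in\{0,1\}^n\\ \varepsilon_i\varepsilon_{i+1}\neq 00}} W_{\varepsilon_1}\otimes W_{\varepsilon_2}\otimes\cdots\otimes W_{\varepsilon_n},$$
where $W_0=A$ and $W_1=V$, so that each summand is an alternating tensor word with no two consecutive $A$-slots. I equip $N(A,V)$ with the associative $\bk$-algebra structure given on summands by concatenation-with-collapse: two such words are juxtaposed, and whenever the juxtaposition creates two adjacent $A$-slots, they are merged using the multiplication of $A$. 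This is essentially the description of the coproduct $A\sqcup \bar{\TT}(V)$ in $\Alg$.

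Next I produce mutually inverse $\bk$-linear isomorphisms between $\tilde{\bar{\TT}}(A\oplus V)$ and $N(A,V)$. The linear inclusion $A\oplus V\hookrightarrow N(A,V)$ extends by the universal property of $\bar{\TT}$ to an algebra homomorphism $\bar{\TT}(A\oplus V)\to N(A,V)$, which kills $a\otimes b - a\cdot b$ by construction, hence descends to $\pi\colon \tilde{\bar{\TT}}(A\oplus V)\to N(A,V)$. In the other direction, the summand inclusions $N(A,V)\hookrightarrow \bar{\TT}(A\oplus V)$ composed with the quotient map give a $\bk$-linear map $\iota\colon N(A,V)\to \tilde{\bar{\TT}}(A\oplus V)$. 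The identity $\pi\circ\iota=\id$ is visible on generators, while $\iota\circ\pi=\id$ follows because every pure tensor in $\bar{\TT}(A\oplus V)$, after expanding each factor along $A\oplus V$, can be rewritten modulo the defining relations into the image of $N(A,V)$.

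Once this normal form is available, the proof of the lemma is immediate. The injections $A\hookrightarrow B$ and $V\hookrightarrow V'$ induce, summand by summand, the linear maps
$$W_{\varepsilon_1}\otimes\cdots\otimes W_{\varepsilon_n}\;\longrightarrow\; W'_{\varepsilon_1}\otimes\cdots\otimes W'_{\varepsilon_n},$$
which are injective because over the field $\bk$ the tensor product of injective $\bk$-linear maps is injective. Hence the induced map $N(A,V)\to N(B,V')$ is injective on each summand, and thus on the entire direct sum, which via the normal-form identification is exactly the homomorphism $\tilde{\bar{\TT}}(A\oplus V)\to \tilde{\bar{\TT}}(B\oplus V')$. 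I expect the main obstacle to be the bookkeeping needed to define and check associativity of the multiplication on $N(A,V)$: the only nontrivial configuration is when concatenation fuses two $A$-slots, and one has to verify that the resulting ternary reassociations commute with the collapse step; once this routine but slightly delicate verification is in place, the rest of the argument is formal.
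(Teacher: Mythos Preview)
Your proposal is correct and follows precisely the approach the paper intends: the paper omits the proof entirely, stating only that ``all the proofs in this section are similar to those in the previous sections,'' and those previous results (Lemmas~\ref{Lem: unique expression in Stilde} and~\ref{Lem: unique expression in Mtilde}) establish injectivity via a unique normal-form expression, which is exactly your alternating-word decomposition $N(A,V)$. Your write-up is in fact a faithful linearisation of Lemma~\ref{Lem: unique expression in Stilde}(a)--(b), so there is nothing to add.
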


Let $A$ be a $\bk$-algebra and $\calf_0(A):=A$. The inclusion into the first component $ A\hookrightarrow A\oplus  \left \lfloor A\right \rfloor$ induces an injective algebra homomorphism
$$
i_{0,1}: \calf_0(A)=A{\hookrightarrow}\calf_1(A):=\tilde{\bar{\TT}}(A\oplus \left \lfloor A\right \rfloor)
.$$

For $ n\geq2 $, assume  that we have a series of monomorphisms of algebras
$$\calf_0(A) \overset{i_{0,1}}{\hookrightarrow}\calf_1(A)\overset{i_{1,2}}{\hookrightarrow} \cdots \overset{i_{n-3,n-2}}{\hookrightarrow} \calf_{n-2}(A)\overset{i_{n-2,n-1}}{\hookrightarrow}\calf_{n-1}(A).
$$
with $\calf_{k}(A):=\tilde{\bar{\TT}}(A\oplus \left \lfloor \calf_{k-1}(A)\right \rfloor)$ for $1\leq k\leq n-1$.
 We define the $\bk$-algebra
$$
\calf_{n}(A):=\tilde{\bar{\TT}}(A\oplus \left \lfloor \calf_{n-1}(A)\right \rfloor).
$$
The natural  injection
$$	\Id_A\oplus \left \lfloor i_{n-2,n-1}\right \rfloor :A\oplus\left \lfloor \calf_{n-2}(A)\right \rfloor\hookrightarrow A\oplus\left \lfloor \calf_{n-1}(A)\right \rfloor\nonumber$$
induces a $\bk$-algebra injective homomorphism
$$
i_{n-1,n}:		\calf_{n-1}(A)=\tilde{\bar{\TT}}(A\oplus  \left \lfloor \calf_{n-2}(A)\right \rfloor){\hookrightarrow} \calf_{n}(A)=\tilde{\bar{\TT}}(A\oplus  \left \lfloor \calf_{n-1}(A)\right \rfloor).
$$
Denote $ \calf(A)=\varinjlim \calf_{n}(A)=\bigcup_{n\geq 0}\calf_{n}(A)$ and an operator $P_{\calf}$ over $\calf$ sending $v\in \calf$ to $\left \lfloor v\right \rfloor$ . Thus, we have constructed the free object functor:
$$
\begin{aligned}
	\calf_{\Alg}^{\OpAlg}: &\Alg&\longrightarrow &\ \OpAlg\\
	&A&\longmapsto&\ ( \calf(A),P_{\calf(A)}),
\end{aligned}
$$
which is left adjoint to the forgetful functor $ \calu^{\OpAlg}_{\Alg}$.

\subsection{Free object functor from unital algebras to unital operated algebras}\label{Free object functor from unital algebras to unital operated algebras}\

\begin{defn}
	Let $A$ be a unital $\bk$-algebra and $V$ a $\bk$-space. We define the following quotient unital $\bk$-algebra
	$$
	\hat{\TT}(A\oplus V):=\TT(A\oplus V)\slash\left\langle a\otimes b-a\cdot b,\ 1_A-1_{\TT(A\oplus V)} ~|~\forall a,b\in A\right\rangle
	.$$
	
	In particular, if $ V=0 $, then $ \hat{{\TT}}(A)={\TT}(A)\slash\left\langle a\otimes b-a\cdot b~|~\forall a,b\in A\right\rangle=A $.
\end{defn}

\begin{lemma}\label{Lem: injective of unital algebra}
	Let $V\hookrightarrow V'$ be an injective linear map, $A\hookrightarrow B$ be a monomorphism of unital $\bk$-algebras. Then the induced homomorphism
	$$\hat{\TT}(A\oplus V)\rightarrow \hat{\TT}(B\oplus V')$$
	is injective.
\end{lemma}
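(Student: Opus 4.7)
The plan is to mimic the proof sketched for Lemma~\ref{Lem: injective of algebra} while accounting for the extra relation $1_A=1_{\TT(A\oplus V)}$. The idea is to exhibit an explicit $\bk$-linear basis of $\hat{\TT}(A\oplus V)$ that is sent, entry by entry, onto part of a corresponding basis of $\hat{\TT}(B\oplus V')$ under the induced homomorphism; this makes injectivity visible on the nose.

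Fix vector-space splittings $A=\bk\cdot 1_A\oplus \bar A$ and $B=\bk\cdot 1_B\oplus \bar B$, and choose a basis $E_A$ of $\bar A$. Since $A\hookrightarrow B$ is unital, $1_A\mapsto 1_B$, so $\bar A$ embeds into $\bar B$ and $E_A$ can be extended to a basis $E_B$ of $\bar B$. Likewise extend a basis $E_V$ of $V$ to a basis $E_{V'}$ of $V'$. The main claim is that $\hat{\TT}(A\oplus V)$ has as a $\bk$-basis the element $1$ together with all \emph{alternating} tensor monomials $u_1\otimes\cdots\otimes u_n$, $n\geq 1$, where each $u_i\in E_A\sqcup E_V$ and no two consecutive entries lie in $E_A$ (consecutive $E_V$-entries being allowed). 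Granted this, the induced map sends such a basis monomial of the source to the same monomial read in $E_B\sqcup E_{V'}$; distinct source monomials land in distinct target monomials, so the map is injective.

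To establish the basis claim, apply a Composition-Diamond/Bergman diamond style argument to $\TT(A\oplus V)$ with the rewriting rules $1_A\to 1_{\TT(A\oplus V)}$ and, for each pair $a,b\in\{1_A\}\cup E_A$, the rule $a\otimes b\to a\cdot b$, the right-hand side being re-expanded in the basis $\{1_A\}\cup E_A$ of $A$. The ambiguities to resolve are the associativity overlap $(a\otimes b)\otimes c$ versus $a\otimes(b\otimes c)$ for $a,b,c\in\{1_A\}\cup E_A$, which is settled by associativity of the multiplication in $A$, and the unit overlaps involving $1_A$, which are settled by the unit axiom in $A$ combined with the identification $1_A=1_{\TT(A\oplus V)}$. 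Once confluence is verified, the normal forms are exactly the alternating monomials described above, linear independence is automatic, and the lemma follows. The main technical obstacle is precisely this confluence check; the rest is bookkeeping.
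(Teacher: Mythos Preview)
Your argument is correct and is essentially the approach the paper has in mind. The paper omits the proof of this lemma entirely, referring back to the analogous monoid-theoretic statement (Lemma~\ref{Lem: unique expression in Mtilde}), whose content is precisely the existence of the ``alternating'' normal form you describe; your use of Bergman's diamond lemma is just a rigorous way of establishing that normal form in the linear setting.

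Two minor points worth tightening. First, the claim that $\bar A$ embeds into $\bar B$ is not automatic for arbitrary complements; you should choose $\bar A$ first, observe that its image in $B$ meets $\bk\cdot 1_B$ trivially (by injectivity and unitality), and then pick $\bar B$ to contain this image. Second, your rewriting system is cleaner if you drop $1_A$ from the alphabet altogether and work over $E_A\sqcup E_V$, with rules $a\otimes b\to a\cdot b$ for $a,b\in E_A$ where the right-hand side is expanded in the basis $\{1\}\cup E_A$ of $A$ (using $1_A=1$ already); this eliminates the separate rule $1_A\to 1$ and the inclusion ambiguities it creates, leaving only the associativity overlap to check. Either way, termination is immediate since every rule strictly decreases tensor length.
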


Let $A$ be a unital $\bk$-algebra and $\calf_0(A):= \hat{\TT}(A)=A$. By Lemma~\ref{Lem: injective of unital algebra}, the inclusion $ A\hookrightarrow A\oplus  \left \lfloor A\right \rfloor$ induces a monomorphism
$$
i_{0,1}:	 \calf_0(A)=\hat{\TT}(A){\hookrightarrow}\calf_1(A):=\hat{\TT}(A\oplus \left \lfloor A\right \rfloor)
$$
of unital algebras.

 For $ n\geq2 $, by induction starting from a series of monomorphisms of unital algebras
$$\calf_0(A)\overset{i_{0,1}}{\hookrightarrow}\calf_1(A)\overset{i_{1,2}}{\hookrightarrow} \cdots \overset{i_{n-3,n-2}}{\hookrightarrow} \calf_{n-2}(A)\overset{i_{n-2,n-1}}{\hookrightarrow}\calf_{n-1}(A)
$$
with $\calf_{k}(A)=\hat{\TT}(A\oplus \left \lfloor \calf_{k-1}(A)\right \rfloor) $ for $1\leq k\leq n-1$.
 Then we define the unital algebra
\begin{equation}\label{key}
	\nonumber \calf_{n}(A):=\hat{\TT}(A\oplus \left \lfloor \calf_{n-1}(A)\right \rfloor).
\end{equation}
The natural  injection
$$	\Id_{A}\oplus \left \lfloor i_{n-2,n-1}\right \rfloor :A\oplus\left \lfloor \calf_{n-2}(A)\right \rfloor\hookrightarrow A\oplus\left \lfloor \calf_{n-1}(A)\right \rfloor\nonumber$$
induces an injective algebra  homomorphism
$$
i_{n-1,n}:		\calf_{n-1}(A)=\hat{\TT}(A\oplus  \left \lfloor \calf_{n-2}(A)\right \rfloor) {\hookrightarrow} \calf_{n}(A)=\hat{\TT}(A\oplus  \left \lfloor \calf_{n-1}(A)\right \rfloor).
$$
Denote $ \calf(A)=\varinjlim \calf_{n}(A)=\bigcup_{n\geq 0}\calf_{n}(A)$ and define an operator $P_{\calf(A)}$ over $\calf(A)$ by sending $v\in \calf(A)$ to $\left \lfloor v\right \rfloor$. Thus, we have constructed the free object functor:
$$
\begin{aligned}
	\calf_{\uAlg}^{\uOpAlg}: &\uAlg&\longrightarrow &\ \uOpAlg\\
	&A&\longmapsto&\ ( \calf(A),P_{\calf(A)}).
\end{aligned}
$$
left adjoint to the forgetful functor $ \calu^{\uOpAlg}_{\uAlg}$.

\subsection{Free object functor from operated vector spaces  to operated algebras}\

In this subsection, we will construct the free object functor $ \calf^{\OpAlg}_{\OpVect} $ from $\OpVect$ to $\OpAlg$ which is left adjoint to the forgetful functor $\calu^{\OpAlg}_{\OpVect} $ from $\OpAlg$ to $\OpVect$.

\par Let $ (V,P_V)$ be an operated vector space.   As $\bar{\TT}(V)$ has a canonical splitting $\bar{\TT}(V)=V\oplus V^{\otimes 2}\oplus  \cdots$ and we will write $\bar{\TT}(V)\slash V=V^{\otimes 2}\oplus  \cdots$ in the sequel. The reader should understand the notations $\calf_{n}(V)/V$,  $\tilde{\TT}(A)\slash A$ and $\calf_{n}(A)/A$ via the canonical splitting.

The inclusion $ i_0:V\hookrightarrow V\oplus  \left \lfloor \bar{\TT}(V)\slash V\right \rfloor$ induces a monomorphism of algebras:
$$
i_{0,1}:	 \calf_0(V):=\bar{\TT}(V){\hookrightarrow}\calf_1(V):=\bar{\TT}(V\oplus \left \lfloor \bar{\TT}(V)\slash V\right \rfloor).
$$
 For $ n\geq2 $, suppose  that  a series of monomorphisms of algebras
$$\calf_0(V) \overset{i_{0,1}}{\hookrightarrow}\calf_1(V)\overset{i_{1,2}}{\hookrightarrow} \cdots \overset{i_{n-3,n-2}}{\hookrightarrow} \calf_{n-2}(V)\overset{i_{n-2,n-1}}{\hookrightarrow}\calf_{n-1}(V),
$$
has been obtained. Then we define
$$
\calf_{n}(V):=\bar{\TT}(V\oplus  \left \lfloor \calf_{n-1}(V)\slash V\right \rfloor).
$$
The injection
$$
\Id_V\oplus \left \lfloor i_{n-2,n-1}|_{\calf_{n-2}(V)\slash V}\right \rfloor:V\oplus\left \lfloor \calf_{n-2}(V)\slash V\right \rfloor\hookrightarrow V\oplus\left \lfloor \calf_{n-1}(V)\slash V\right \rfloor
$$
induces a $\bk$-algebra injection
$$
i_{n-1,n}:		\calf_{n-1}(V):=\bar{\TT}(V\oplus  \left \lfloor \calf_{n-2}(V)\slash V\right \rfloor){\hookrightarrow} \calf_{n}(V):=\bar{\TT}(V\oplus  \left \lfloor \calf_{n-1}(V)\slash V\right \rfloor).
$$
Notice that  the subspace $V$ has a canonical complement in each $\calf_k(V)$; for instance, the complement of $V$ in $\calf_0(V)=\bar{\TT}(V)$ is exactly $\bigoplus_{n\geqslant 2}V^{\ot n}$. In practice, we always identify $\calf_k(V)/V$ with this complement.

By the limit construction, there are injective homomorphisms $i_n: \calf_n(V)\hookrightarrow \calf(V), n\geqslant 0$ and $i: V\hookrightarrow \calf(V)$.
Then, we define an operated $\bk$-algebra
$$
	\nonumber \left( \calf(V):=\varinjlim \calf_{n}(V), P_{\calf(V)}(u):=
	\left\{
	\begin{array}{lcl}
		i(P_V(v)) &\text{if} &u=i(v) \ \text{for some}\ v \in V \\
		i_{k+1}(\left\lfloor u'\right\rfloor) &\text{if} & u\notin i(V),\ \text{but}\  u=i_k(u')\ \text{for some}\ u'\in \calf_k(V)
	\end{array}
	\right.  \right).
$$
Thus we get a functor
$$\begin{aligned}
		\calf^{\OpAlg}_{\OpVect}: &\OpVect&\longrightarrow &\OpAlg\\
		&(V,P_V)&\longmapsto&( \calf(V),P_{\calf(V)}).
	\end{aligned}
$$

We obtain the following result:
\begin{prop}
	The functor $	\calf^{\OpAlg}_{\OpVect}$ is the left adjoint of the forgetful functor $\calu^\OpAlg_{\OpVect}:\OpAlg\rightarrow \OpVect$.
\end{prop}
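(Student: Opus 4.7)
The plan is to mimic the proofs of Propositions~\ref{Prop: free object functor from semigroups to operated semigroups} and~\ref{Prop: Free object functor from operated sets to operated semigroups}, now in the linear algebra setting. Concretely, given an operated $\bk$-algebra $(A, P_A)$ and a morphism of operated $\bk$-spaces $\theta : (V, P_V) \to \calu^{\OpAlg}_{\OpVect}(A, P_A) = (A, P_A)$, I will build inductively a compatible tower of $\bk$-algebra homomorphisms $\tilde{\theta}_n : \calf_n(V) \to A$, pass to the colimit $\tilde{\theta} : \calf(V) \to A$, and verify that it is the unique morphism of operated $\bk$-algebras extending $\theta$.

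First, I would set $\tilde{\theta}_0 : \calf_0(V) = \bar{\TT}(V) \to A$ to be the unique $\bk$-algebra homomorphism extending $\theta$, which exists by the universal property of $\calf^{\Alg}_{\Vect}$. For the inductive step, assuming $\tilde{\theta}_{n-1} : \calf_{n-1}(V) \to A$ has been constructed so that $\tilde{\theta}_{n-1} \circ i_{n-2, n-1} = \tilde{\theta}_{n-2}$, I define a linear map
\[
\psi_n : V \oplus \lfloor \calf_{n-1}(V)/V \rfloor \longrightarrow A, \quad v \mapsto \theta(v), \quad \lfloor u \rfloor \mapsto P_A(\tilde{\theta}_{n-1}(u)).
\]
Applying $\calf^{\Alg}_{\Vect}$ yields an algebra homomorphism $\bar{\psi}_n : \bar{\TT}(V \oplus \lfloor \calf_{n-1}(V)/V \rfloor) \to A$; since $\theta|_V$ is itself an algebra homomorphism when we regard $V \subset \bar{\TT}(V)$ inside the image of $\tilde{\theta}_{n-1}$, the relation identifying $a \ot b$ with $a\cdot b$ for $a, b \in V$ (well, more precisely, the generators of the reduced tensor algebra sitting inside $\calf_n(V) = \bar{\TT}(V \oplus \lfloor \calf_{n-1}(V)/V \rfloor)$ are literally free on that sum, so the quotient identifications in the definition are vacuous at this stage, and the restriction $\tilde{\theta}_n|_V = \theta$ is built in). This gives $\tilde{\theta}_n : \calf_n(V) \to A$, and direct inspection shows $\tilde{\theta}_n \circ i_{n-1, n} = \tilde{\theta}_{n-1}$, both sides being determined by their values on generators. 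Taking the direct limit produces $\tilde{\theta} : \calf(V) \to A$, a $\bk$-algebra homomorphism.

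Next I would verify that $\tilde{\theta}$ intertwines the operators. On $i(V) \subset \calf(V)$, $P_{\calf(V)}$ is $i \circ P_V$, and $\tilde{\theta} \circ i = \theta$ intertwines $P_V$ and $P_A$ by assumption. On elements of the form $i_k(u')$ with $u' \in \calf_k(V) \setminus i(V)$, we have $P_{\calf(V)}(i_k(u')) = i_{k+1}(\lfloor u' \rfloor)$, and by the construction of $\tilde{\theta}_{k+1}$ the right-hand side is sent to $P_A(\tilde{\theta}_k(u')) = P_A(\tilde{\theta}(i_k(u')))$. So $\tilde{\theta}$ is a morphism in $\OpAlg$.

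For uniqueness, note that $\calf(V)$ is generated as an operated $\bk$-algebra by $i(V)$: this follows because every element of $\calf_n(V)$ is a polynomial in elements of $V$ and of $\lfloor \calf_{n-1}(V)/V \rfloor$, so induction on $n$ shows any element of $\calf(V)$ is built from $i(V)$ by the algebra operations and $P_{\calf(V)}$. Any morphism of operated algebras extending $\theta$ is therefore determined on this generating set and commutes with the algebra and operator structure, hence coincides with $\tilde{\theta}$. The main technical point to be careful about is the well-definedness of each $\tilde{\theta}_n$ through the defining quotient relations of $\tilde{\bar{\TT}}$, but since at each stage the identifications only impose $a \ot b = a \cdot b$ for $a, b$ in the algebra from the previous stage (namely $V$ at the ground level and $\calf_{k-1}(V)$ at level $k$, through the recursive definition), and $\tilde{\theta}_{n-1}$ is by inductive hypothesis already an algebra homomorphism, these relations are preserved; this parallels exactly the argument using $t_1 \cdot t_2$ in the proof of Proposition~\ref{Prop: free object functor from semigroups to operated semigroups}.
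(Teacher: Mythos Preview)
Your approach is correct and is precisely the linear analogue the paper intends (the paper omits this proof, pointing back to the arguments of Propositions~\ref{Prop: free object functor from semigroups to operated semigroups} and~\ref{Prop: Free object functor from operated sets to operated semigroups}). The inductive construction of the $\tilde{\theta}_n$, the colimit argument, the verification that $\tilde{\theta}$ intertwines the operators, and the uniqueness via generation by $i(V)$ are all exactly right.

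There is, however, a recurring confusion you should clean up: the construction of $\calf^{\OpAlg}_{\OpVect}$ uses the plain reduced tensor algebra $\bar{\TT}$, \emph{not} the quotient $\tilde{\bar{\TT}}$. At no stage are relations $a\otimes b = a\cdot b$ imposed, because $V$ is merely a vector space, not an algebra. You actually notice this in your parenthetical remark (``the quotient identifications in the definition are vacuous''), but then your final paragraph again speaks of ``the defining quotient relations of $\tilde{\bar{\TT}}$'' and of checking that $\tilde{\theta}_{n-1}$ preserves them. That paragraph is simply irrelevant here and should be deleted: each $\tilde{\theta}_n$ is obtained directly from the universal property of the free (non-unital) algebra $\bar{\TT}$ on the vector space $V\oplus\lfloor\calf_{n-1}(V)/V\rfloor$, with nothing further to verify. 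The correct model is the proof of Proposition~\ref{Prop: Free object functor from operated sets to operated semigroups} (operated sets to operated semigroups), where $\cals$ plays the role of $\bar{\TT}$, rather than Proposition~\ref{Prop: free object functor from semigroups to operated semigroups}, where the quotient $\tilde{\cals}$ appears.
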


\subsection{Free object functor from operated algebras to unital operated algebras}\

In this subsection, we will construct the free object functor $\calf_{\OpAlg}^{\uOpAlg}$ from $\OpAlg$ to $\uOpAlg$ which is left adjoint to the forgetful functor $\calu^{\uOpAlg}_{\OpAlg}$ from $\uOpAlg$ to $\OpAlg$.

\begin{defn}
	Let $ V$ be a $\bk$-space, $ A $ be a $\bk$-algebra. We define the following quotient unital $\bk$-algebra:
	$$
	\tilde{\TT}(A\oplus V):=\TT(A\oplus V)\slash\left\langle a\otimes b-a\cdot b~|~\forall a,b\in A\right\rangle
	,$$
	where $ \otimes $ and $  \cdot $ are respectively the multiplications in $\TT(A\oplus V)$  and $A$. In particular, if $ V=0 $ then $ \tilde{\TT}(A\oplus V)= \TT(A)\slash\left\langle a\otimes b-a\cdot b~|~\forall a,b\in A\right\rangle=\bk\oplus A$.
\end{defn}

\begin{lemma}\label{Lem: injective of algebra}
	Let $V\hookrightarrow V'$ be an injective linear map, $A\hookrightarrow B$ be an injective homomorphism of $\bk$-algebras. Then the induced homomorphism of unital algebras
	$$\tilde{{\TT}}(A\oplus V)\rightarrow \tilde{{\TT}}(B\oplus V')$$
	is injective.
\end{lemma}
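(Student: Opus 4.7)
The plan is to adapt the proof of Lemma~\ref{Lem: unique expression in Stilde}(b) to the linear setting by first establishing a canonical normal-form basis for $\tilde{\TT}(A\oplus V)$ and then observing that the induced map carries normal-form basis elements to normal-form basis elements.

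First I would choose a $\bk$-basis $X$ of $V$ and extend it to a basis $X'$ of $V'$ via the injection $V\hookrightarrow V'$, and similarly fix a $\bk$-basis $Y$ of $A$ and extend it to a basis $Y'$ of $B$. Then $\TT(A\oplus V)$ has the obvious $\bk$-basis consisting of all (possibly empty) words in the alphabet $X\sqcup Y$, and likewise $\TT(B\oplus V')$ has the basis of words in $X'\sqcup Y'$.

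Next I would translate the defining relations $a\otimes b-a\cdot b$ (for $a,b\in A$) into the rewriting rule that replaces any adjacent pair of letters $y_iy_j$ with both $y_i,y_j\in Y$ by the element $y_i\cdot y_j\in\bk Y$. This rewriting system is strictly length-decreasing, hence terminating, and the only critical overlap is a triple $y_iy_jy_k$ of consecutive $Y$-letters, where confluence follows from associativity of the multiplication in $A$. Bergman's Diamond Lemma then identifies $\tilde{\TT}(A\oplus V)$ with the $\bk$-span of reduced words---those in $X\sqcup Y$ in which no two adjacent letters both lie in $Y$---equipped with the alternating-word multiplication; the analogous statement holds for $\tilde{\TT}(B\oplus V')$.

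Finally, since $X\sqcup Y\hookrightarrow X'\sqcup Y'$, the induced map sends each reduced basis word of $\tilde{\TT}(A\oplus V)$ to a distinct reduced basis word of $\tilde{\TT}(B\oplus V')$, so the induced $\bk$-linear map is injective. The main obstacle will be the normal-form argument in the second step: writing down the confluence check and verifying that the reduced words span $\tilde{\TT}(A\oplus V)$ freely; once the normal form is in hand, injectivity is essentially automatic. As an alternative to invoking the Diamond Lemma, one could define the candidate normal-form space explicitly, endow it with an alternating-word associative product, and verify directly that it satisfies the universal property of $\tilde{\TT}(A\oplus V)$---a more concrete but formally equivalent route, parallel to what underlies Lemma~\ref{Lem: unique expression in Mtilde}.
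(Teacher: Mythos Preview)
Your proposal is correct and is exactly the linear analogue of the normal-form argument behind Lemma~\ref{Lem: unique expression in Stilde} and Lemma~\ref{Lem: unique expression in Mtilde}; the paper omits the proof of the present lemma, saying only that the arguments in this section parallel those of the preceding one, so you have supplied precisely the details the paper leaves to the reader. Invoking Bergman's Diamond Lemma is a tidy way to certify the normal form, and your alternative (building the alternating-word algebra directly and checking its universal property) is the more hands-on but equivalent route the paper implicitly has in mind.
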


Let $ (A,P_A) $ be an operated $\bk$-algebra and $\calf_0(A):=\tilde{\TT}(A)$. Deduced from Lemma~\ref{Lem: injective of algebra}, the inclusion $ i_0:A\hookrightarrow A\oplus  \left \lfloor \tilde{\TT}(A)\slash A\right \rfloor$ induces an injective homomophism of unital algebras
$$
i_{0,1}: \calf_0(A)=\tilde{\TT}(A){\hookrightarrow}\calf_1(A):=\tilde{\TT}(A\oplus \left \lfloor \tilde{\TT}(A)\slash A\right \rfloor).
$$
 For $ n\geq2 $, assume that a series of injective homomorphisms of unital algebras
$$\calf_0(A) \overset{i_{0,1}}{\hookrightarrow}\calf_1(A)\overset{i_{1,2}}{\hookrightarrow} \cdots \overset{i_{n-3,n-2}}{\hookrightarrow} \calf_{n-2}(A)\overset{i_{n-2,n-1}}{\hookrightarrow}\calf_{n-1}(A),
$$
with $\calf_{k}(A)=\tilde{\TT}(A\oplus  \left \lfloor \calf_{k-1}(A)\slash A\right \rfloor)$ for all $1\leqslant k\leqslant n-1$,
has been constructed. Then we define
$$ \calf_{n}(A):=\tilde{\TT}(A\oplus  \left \lfloor \calf_{n-1}(A)\slash A\right \rfloor).
$$
By Lemma~\ref{Lem: injective of algebra}, the injection
$$
\Id_A\oplus \left \lfloor i_{n-2,n-1}|_{\calf_{n-2}(A)\slash A}\right \rfloor:A\oplus\left \lfloor \calf_{n-2}(A)\slash A\right \rfloor\hookrightarrow A\oplus\left \lfloor \calf_{n-1}(A)\slash A\right \rfloor
$$
 induces an injective homomorphism of unital $\bk$-algebras:
$$
i_{n-1,n}:		\calf_{n-1}(A)=\tilde{\TT}(A\oplus  \left \lfloor \calf_{n-2}(A)\slash A\right \rfloor){\hookrightarrow} \calf_{n}(A)=\tilde{\TT}(A\oplus  \left \lfloor \calf_{n-1}(A)\slash A\right \rfloor).
$$

By the limit construction, there are injective homomorphisms $i_n: \calf_n(A)\hookrightarrow \calf(A), n\geqslant 0$ and $i: A\hookrightarrow \calf(A)$.
Finally, we define an operated untial $\bk$-algebra
\begin{equation}
	\nonumber \left( \calf(A):=\varinjlim \calf_{n}(A), P_{\calf(A)}(u):=
	\left\{
	\begin{array}{lcl}
		i(P_A(a)) &\text{if} &u=i(a) \ \text{for some }\  a\in {A} \\
		 i_{k+1}(\left\lfloor u'\right\rfloor) &\text{if} &u\notin i(A), \text{but}\ u=i_k(u') \ \text{for some}\ u' \in \calf_k(A)
	\end{array}
	\right. \right).
\end{equation}

Thus we obtain a functor
\begin{equation}
	\nonumber
	\begin{aligned}
		\calf_{\OpAlg}^{\uOpAlg}: &\OpAlg&\longrightarrow &\ \uOpAlg\\
		&(A,P_A)&\longmapsto&\ ( \calf(A),P_{\calf(A)})
	\end{aligned}
\end{equation}
and we can prove the following result:
\begin{prop}
	The  functor $\calf_{\OpAlg}^{\uOpAlg}$ is the left adjoint of the forgetful functor $\calu^{\uOpAlg}_{\OpAlg}:\uOpAlg\rightarrow \OpAlg$, thus it is the free object functor from $\OpAlg$ to $\uOpAlg$.
\end{prop}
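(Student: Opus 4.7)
The plan is to mimic the adjunction argument used in Proposition~\ref{Prop: Free object functor from operated sets to operated semigroups} and the construction in Subsection~\ref{Free object functor from unital algebras to unital operated algebras}, iteratively extending a morphism $\theta:(A,P_A)\to\calu^{\uOpAlg}_{\OpAlg}(B,P_B)=(B,P_B)$ of operated algebras to morphisms $\tilde\theta_n:\calf_n(A)\to B$ of unital algebras that respect the operators, and then passing to the limit. Let $i:A\hookrightarrow\calf(A)$ be the structural inclusion; uniqueness of the extension $\tilde\theta$ with $\tilde\theta\circ i=\theta$ is automatic since $A$ together with repeated applications of $P_{\calf(A)}$ and the unital algebra operations generate $\calf(A)$.

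For the base step, note $\calf_0(A)=\tilde{\TT}(A)=\bk\oplus A$ as a unital algebra, and the universal property of the quotient $\tilde{\TT}(A)$ gives a unique unital algebra map $\tilde\theta_0:\bk\oplus A\to B$ sending $1_\bk$ to $1_B$ and $a$ to $\theta(a)$; this uses that $\theta$ is a (not necessarily unital) algebra homomorphism, so $\theta(a)\theta(b)=\theta(ab)$, which is exactly the relation collapsed in forming $\tilde{\TT}(A)$ from $\TT(A)$. For the inductive step, assume $\tilde\theta_{n-1}:\calf_{n-1}(A)\to B$ has been defined and satisfies $\tilde\theta_{n-1}\circ i_{n-2,n-1}=\tilde\theta_{n-2}$ and $\tilde\theta_{n-1}\circ P_{\calf(A)}=P_B\circ\tilde\theta_{n-2}$ on the image of $\calf_{n-2}(A)$. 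Define a linear map
$$\theta_n:A\oplus\lfloor\calf_{n-1}(A)/A\rfloor\lon B,\quad a\longmapsto\theta(a),\quad \lfloor u\rfloor\longmapsto P_B(\tilde\theta_{n-1}(u)),$$
and let $\bar\theta_n:\TT(A\oplus\lfloor\calf_{n-1}(A)/A\rfloor)\to B$ be the induced unital algebra homomorphism obtained from the universal property of the tensor algebra. Since $\theta$ is an algebra map, $\bar\theta_n(a\otimes b)=\theta(a)\theta(b)=\theta(ab)=\bar\theta_n(a\cdot b)$ and $\bar\theta_n$ sends $1_A$ to $1_B$ as an image of $1_\bk$, so $\bar\theta_n$ descends to $\tilde\theta_n:\calf_n(A)=\tilde{\TT}(A\oplus\lfloor\calf_{n-1}(A)/A\rfloor)\to B$ as required.

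Two small compatibility checks remain at each stage: first, $\tilde\theta_n\circ i_{n-1,n}=\tilde\theta_{n-1}$, which holds because the two maps agree on the generating subspace $A\oplus\lfloor\calf_{n-2}(A)/A\rfloor$ by construction and both are unital algebra homomorphisms; second, $\tilde\theta_n\circ P_{\calf(A)}=P_B\circ\tilde\theta_{n-1}$ on $\calf_{n-1}(A)$, which is built into the definition $\theta_n(\lfloor u\rfloor)=P_B(\tilde\theta_{n-1}(u))$, together with the separate verification on $i(A)\subset\calf(A)$, where it follows from the fact that $\theta$ is a morphism of operated algebras, i.e. $\theta\circ P_A=P_B\circ\theta$. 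Taking colimits yields $\tilde\theta:\calf(A)\to B$, a unital algebra homomorphism commuting with operators.

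The main obstacle is of a bookkeeping nature: one must consistently use the canonical splitting $\calf_n(A)=A\oplus(\calf_n(A)/A)$ to identify the domain on which $\lfloor\,\cdot\,\rfloor$ is applied, and verify at each inductive step that the relations defining the quotient $\tilde{\TT}(A\oplus\lfloor\calf_{n-1}(A)/A\rfloor)$ from $\TT(A\oplus\lfloor\calf_{n-1}(A)/A\rfloor)$ are indeed annihilated by $\bar\theta_n$; injectivity of $i_{n-1,n}$ from Lemma~\ref{Lem: injective of algebra} ensures that $\calf(A)/A$ and $A$ sit as genuine direct summands, so the recursive definition of $\theta_n$ is unambiguous. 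Once this is in place, uniqueness and functoriality are routine, and the desired adjunction $\Hom_{\uOpAlg}(\calf^{\uOpAlg}_{\OpAlg}(A,P_A),(B,P_B))\cong\Hom_{\OpAlg}((A,P_A),\calu^{\uOpAlg}_{\OpAlg}(B,P_B))$ follows, completing the proof.
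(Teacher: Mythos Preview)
Your proposal is correct and follows precisely the approach the paper intends: the paper explicitly omits this proof, stating that all proofs in Section~3 are similar to those in Section~2, and your argument is the natural adaptation of the proof of Proposition~\ref{Prop: Free object functor from operated sets to operated semigroups} (and of Proposition~\ref{Prop: free object functor from semigroups to operated semigroups}) to the unital-algebra setting. One small notational slip: $A$ here is a non-unital operated algebra, so there is no element ``$1_A$'' to speak of; the only relations collapsed in forming $\tilde{\TT}(A\oplus V)$ from $\TT(A\oplus V)$ are $a\otimes b-a\cdot b$, and your verification that $\bar\theta_n$ annihilates these suffices.
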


\subsection{Compositions of free object functors}\

It is easy to check that the composite  $\calf^{\uOpAlg}_{\Set}:=\calf^{\uOpAlg}_{\OpMon}\circ\calf^{\OpMon}_{\Set}=\calf_{\uAlg}^{\uOpAlg}\circ\calf_{\Set}^{\uAlg}$  provides   exactly the free unital operated $\bk$-algebra generated by a set,  whose construction firstly appeared in \cite{Guo09a}; for a set $X$, $\calf_{\Set}^{\uOpAlg}(X)=\bk \frakM(X)$.

Although $\calf^{\uOpAlg}_{\Set}$ has many other constructions, as indicated by the cubic diagram in Introduction,  these different  constructions  seem to be much more complex.

\begin{remark} From the analysis of the first three sections, it is interesting to notice that except the four squares in the front  face and the back  face, all other squares of free object functors are actually commutative.

\end{remark}

To conclude this section, we include a result which  considers the action of $\calf_{\uAlg}^{\uOpAlg}$ on a unital  algebra $A=\TT(V)\slash I_A$.
\begin{prop}\label{Prop: algebra ideal considered as operated ideal}
	Let $A=\TT(V)\slash I_A$ be a unital algebra. Then we have:
	$$\calf_{\uAlg}^{\uOpAlg}(A)=\calf_{\Vect}^{\uOpAlg}(V)\slash \left\langle I_A \right\rangle_\uOpAlg,$$
	where $I_A$ is considered as a subset in $\calf_{\Vect}^{\uOpAlg}(V)$ via the canonical inclusion $$j: \TT(V) =\calf_0(\TT(V)) \hookrightarrow \varinjlim{\calf}_n(\TT(V))=\calf_{\uAlg}^{\uOpAlg}(\TT(V))=\calf_{\Vect}^{\uOpAlg}(V).$$
\end{prop}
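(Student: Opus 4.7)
The plan is to prove the identity by showing that both sides represent the same functor (equivalently, by constructing mutually inverse morphisms of operated unital algebras). First, since left adjoints compose and the composite forgetful functor $\calu^{\uOpAlg}_{\Vect}$ factors as $\calu^{\uAlg}_{\Vect} \circ \calu^{\uOpAlg}_{\uAlg}$, there is a natural isomorphism
$$\calf_{\Vect}^{\uOpAlg} \cong \calf_{\uAlg}^{\uOpAlg} \circ \calf_{\Vect}^{\uAlg}.$$
Under this identification $\calf_{\Vect}^{\uOpAlg}(V) = \calf_{\uAlg}^{\uOpAlg}(\TT(V))$, and the canonical embedding $V\hookrightarrow \calf_{\Vect}^{\uOpAlg}(V)$ factors through the map $j: \TT(V)\hookrightarrow \calf_{\uAlg}^{\uOpAlg}(\TT(V))$ from the statement.

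Next, I would apply the functor $\calf_{\uAlg}^{\uOpAlg}$ to the canonical surjection of unital algebras $\pi: \TT(V)\twoheadrightarrow A$ to obtain a morphism of operated unital algebras
$$\widetilde{\pi}: \calf_{\uAlg}^{\uOpAlg}(\TT(V)) \longrightarrow \calf_{\uAlg}^{\uOpAlg}(A).$$
Because $\ker(\widetilde{\pi})$ is an operated ideal containing $j(I_A)$, it contains $\langle I_A\rangle_{\uOpAlg}$, so $\widetilde{\pi}$ descends to a morphism of operated unital algebras
$$\overline{\pi}: \calf_{\Vect}^{\uOpAlg}(V)/\langle I_A\rangle_{\uOpAlg} \longrightarrow \calf_{\uAlg}^{\uOpAlg}(A).$$
For the inverse, the composition $A=\TT(V)/I_A \longrightarrow \calf_{\Vect}^{\uOpAlg}(V)/\langle I_A\rangle_{\uOpAlg}$ sending the class $[x]$ to $[j(x)]$ is a well-defined homomorphism of unital algebras into an operated unital algebra, so by the universal property of $\calf_{\uAlg}^{\uOpAlg}(A)$ it extends uniquely to a morphism of operated unital algebras
$$\sigma: \calf_{\uAlg}^{\uOpAlg}(A) \longrightarrow \calf_{\Vect}^{\uOpAlg}(V)/\langle I_A\rangle_{\uOpAlg}.$$

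Finally, verifying $\overline{\pi}\circ\sigma=\id$ and $\sigma\circ\overline{\pi}=\id$ reduces, by the universal properties, to checking the identity on the generating unital algebras $A$ and $\TT(V)/I_A$ respectively, which is immediate from construction. The main technical point will be checking that the operated ideal $\langle I_A\rangle_{\uOpAlg}$ in $\calf_{\Vect}^{\uOpAlg}(V)$ is small enough to give a genuine inverse; that is, one must ensure $\sigma$ is surjective, which I expect to follow from the fact that $\calf_{\uAlg}^{\uOpAlg}(A)$ is generated, as an operated unital algebra, by its image of $A$, combined with the explicit colimit description of $\calf_{\uAlg}^{\uOpAlg}$ given in Subsection~\ref{Free object functor from unital algebras to unital operated algebras}. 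Everything else is a straightforward unraveling of the adjunctions established in the previous sections.
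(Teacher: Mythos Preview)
Your proof is correct and rests on the same adjunctions as the paper's. The only difference is organizational: the paper verifies directly that $\calf_{\Vect}^{\uOpAlg}(V)/\langle I_A\rangle_{\uOpAlg}$ satisfies the universal property defining $\calf_{\uAlg}^{\uOpAlg}(A)$ (i.e.\ for an arbitrary target $(B,P)$ and unital algebra map $f:A\to B$, it produces the unique extension $\tilde f$), whereas you construct the two comparison maps $\overline{\pi}$ and $\sigma$ and check they are mutual inverses. These are standard equivalent ways of identifying two objects representing the same functor.

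One minor remark: your closing concern about surjectivity of $\sigma$ is unnecessary. Once you have checked $\sigma\circ\overline{\pi}=\id$ by restricting to the image of $V$ (which generates the quotient $\calf_{\Vect}^{\uOpAlg}(V)/\langle I_A\rangle_{\uOpAlg}$ as an operated unital algebra, since $\calf_{\Vect}^{\uOpAlg}(V)$ is already so generated), bijectivity of both maps is automatic; no separate appeal to the colimit description is needed.
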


\begin{proof}

Write canonical surjections  $\pi: \TT(V)\to A$   and $p: \calf_{\Vect}^{\uOpAlg}(V)  \to \calf_{\Vect}^{\uOpAlg}(V)\slash \left\langle I_A \right\rangle_\uOpAlg $. Since $I_A\subseteq \TT(V)\cap \left\langle I_A \right\rangle_\uOpAlg$, the inclusion $j$ induces a homomorphism of unital algebras
$$i: A\to \calf_{\Vect}^{\uOpAlg}(V)\slash \left\langle I_A \right\rangle_\uOpAlg.$$
We have obviously $p\circ j=i\circ \pi.$

 Let $(B,P)$ be a unital operated algebra and  $f:A\rightarrow B$ be  a homomorphism of unital algebras.  We will show that there exists a unique morphism of unital operated algebras
 $$\tilde{f}: \calf_{\Vect}^{\uOpAlg}(V)\slash \left\langle I_A \right\rangle_\uOpAlg\to (B, P)$$ such that
 $\tilde{f}\circ i=f$.

 Consider the following diagram:
$$
 \xymatrix{\TT(V) \ar[rr]^-{j}    \ar[dd]_{\pi} \ar[dr]^{f\circ \pi}
 	&  &\calf_{\uAlg}^{\uOpAlg}(\TT(V))=\calf_{\Vect}^{\uOpAlg}(V) \ar[dd]^{p}  \ar@{-->}[dl]_-{\hat{f}}\\
 	&(B,P)&\\
 	A \ar[rr]^-{i}\ar[ur]^{f}
 	&
 	&{\calf_{\Vect}^{\uOpAlg}(V)\slash \left\langle I_A \right\rangle_\uOpAlg} \ar@{-->}[ul]_{\tilde{f}} }	$$
  The composition $f\circ \pi$ is a homomorphism of unital algebras and sends $I_A$ to zero. By the universal property of $\calf_{\uAlg}^{\uOpAlg}(\TT(V))=\calf_{\Vect}^{\uOpAlg}(V)$, there is a unique morphism $\hat{f}$ of unital operated algebras from $\calf_{\Vect}^{\uOpAlg}(V)$ to $(B,P)$  such that $\hat{f}\circ j=f\circ \pi$. As $j$ is injective,  we also have $I_A\subseteq\ker(\hat{f})$, thus the operated ideal $\left\langle I_A \right\rangle_\uOpAlg $ generated by $I_A$ is also contained in $\ker(\hat{f})$. So $\hat{f}$ factors through $\calf_{\Vect}^{\uOpAlg}(V)\slash \left\langle I_A \right\rangle_\uOpAlg$, which induces  a   morphism $\tilde{f}$ of unital operated algebras from $\calf_{\Vect}^{\uOpAlg}(V)\slash \left\langle I_A \right\rangle_\uOpAlg$ to $(B,P)$ so that $\tilde{f}\circ p=\hat{f}$. Since  $i\circ\pi=p\circ j$, we have
$$\tilde{f}\circ i\circ\pi=\tilde{f}\circ p\circ j=\hat{f}\circ j=f\circ \pi.$$
Hence, $\tilde{f}\circ i=f$, by the surjectivity of  $\pi$.

For the  uniqueness, assume that there is another morphism of unital operated algebras
 $$\tilde{g}: \calf_{\Vect}^{\uOpAlg}(V)\slash \left\langle I_A \right\rangle_\uOpAlg\to (B, P)$$ such that
 $\tilde{g}\circ i=f$.  So $\hat{g}:=\tilde{g}\circ p$ satisfies
 $$\hat{g}\circ j=\tilde{g}\circ p \circ j=\tilde{g}\circ i\circ\pi=f\circ \pi.$$
 By the uniqueness of $\hat{f}$, we  get $\hat{g}=\hat{f}$ and thus $\tilde{g}=\tilde{f}$ as $p$ is surjective.
 \end{proof}



\section{Free $\Phi$-algebras via  universal algebra}
Free objects in operated contexts have been constructed in the previous sections. In this section, we will consider an operated algebra  whose  operator  satisfies extra relations.  It will be seen that free operated algebras with prescribed relations on  the operator    are given by    quotients of free operated algebras.  Our method is in the realm of    universal algebra following \cite{Guo09a}.

Throughout this section, let $X$ be a set.  Recall that $\bk\frakM(X)$   denotes  the free unital operated algebra generated by $X$.

\begin{defn}[{\cite{GSZ, GGSZ, GaoGuo17}}]
	 Let $\phi(x_1, \dots, x_n)\in \bk\frakM(X) $ with $ n\geqslant 1, x_1, \dots, x_n\in X$. We call $\phi(x_1,\dots,x_n)=0$ (or just $\phi(x_1,\dots,x_n)$) an operated polynomial identity (aka  OPI ).
\end{defn}
Let $\phi=\phi(x_1,\dots,x_n)$ be an  OPI  in $\bk\frakM(X)$ and  $(A,P)$ be  a  unital operated algebra. Let $\theta: X\rightarrow A$ be a map with $r_i= \theta(x_i), 1\leqslant i\leqslant n$.
By the universal property of $\bk\frakM(X)$, there is a unique morphism of  unital operated algebras $\tilde{\theta}: \bk\frakM(X)\rightarrow A.$  We denote :
\[ \phi(r_1,\dots,r_n):=\tilde{\theta}(\phi(x_1,\dots,x_n)). \]

\begin{defn}[{\cite{GSZ, GGSZ, GaoGuo17}}]  Let $\phi(x_1,\dots,x_n)$ be an  OPI. A unital operated algebra $(A,P)$ is said to satisfy the OPI $\phi(x_1,\dots,x_n)$ if
	\[\phi(r_1,\dots,r_n)=0 \ \ \  \mbox{for all}\  r_1,\dots,r_n\in A.\]
	In this case, $(A,P)$ is called a   $\phi$-algebra and $P$ is called a $\phi$-operator.
	
	Generally, for a family of OPIs $\Phi\subset \bk\frakM(X)$, we call a  unital operated algebra $(A,P)$ a   $\Phi$-algebra if it is a   $\phi$-algebra for any $\phi\in \Phi$.
	Denote the full subcategory of   $\Phi$-algebras in $\uOpAlg$ by $\Phi\zhx\Alg$. 
\end{defn}

\begin{defn}[{\cite{GSZ, GGSZ, GaoGuo17}}] Let $(A,P)$ be a unital operated algebra. An ideal $I$ of $A$ is called an operated ideal if it is closed under the action of the operator $P$.
	The operated ideal generated by a subset $S\subset A$ is denoted by $\left\langle S\right\rangle_\uOpAlg$.
\end{defn}
It is easy to see that the quotient of a  unital operated algebra by an operated ideal is naturally a unital operated algebra.

Let $\Phi$ be a set of OPIs in   $\bk\frakM(X)$.
For a unital algebra  $A$, define  $R_{\Phi}(A)\subseteq \calf^{\uOpAlg}_{\uAlg}(A)$ to be the following set
	\[R_\Phi(A):=\{\phi(u_1,\dots,u_n)\ |\ u_1,\dots,u_n\in \calf^{\uOpAlg}_{\uAlg}(A),\phi(x_1,\dots,x_n)\in \Phi\}.\]
 In particular, when $A=\bk \calm(Z)$ is the free unital   algebra generated by a set $Z$, we also use the notation $R_\Phi(Z):=R_\Phi(\bk \calm(Z))\subseteq \bk\frakM(Z)$ for short.
We also define another set $S_{\Phi}(Z)\subseteq\frakM(Z)$ by
\[S_{\Phi}(Z):=\{\phi(u_1,\dots,u_k)\ |\ u_1,\dots,u_k\in \frakM(Z),\phi(x_1,\dots,x_k)\in \Phi\}.\]

\begin{prop}[{see for example \cite[Proposition 1.3.6]{Cohn}}]\label{Prop: free Phi algebra generated by a set}
	Let   $\Phi\subseteq  \bk\frakM(X)$ be a set of OPIs. For a set $Z$, the quotient unital operated algebra  $\calf_{\Set}^{\Phi\zhx \Alg}(Z):=\calf^\uOpAlg_\Set(Z)\slash \left\langle R_\Phi(Z)\right\rangle_\uOpAlg$  is the free   $\Phi$-algebra generated by $Z$.
\end{prop}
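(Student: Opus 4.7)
The plan is to verify the two defining properties of a free object: that $B := \calf^\uOpAlg_\Set(Z) / \langle R_\Phi(Z) \rangle_\uOpAlg$ belongs to the category $\Phi\zhx\Alg$, and that it satisfies the expected universal property with respect to the forgetful functor $\Phi\zhx\Alg \to \Set$. Denote by $\pi: \calf^\uOpAlg_\Set(Z) \to B$ the canonical projection; since $\langle R_\Phi(Z) \rangle_\uOpAlg$ is an operated ideal, $B$ inherits the structure of a unital operated algebra and $\pi$ is a morphism in $\uOpAlg$.

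First I would show that $B$ is a $\Phi$-algebra. Fix $\phi=\phi(x_1,\dots,x_n)\in \Phi$ and arbitrary elements $b_1,\dots,b_n \in B$. Lift them to representatives $u_1,\dots,u_n \in \calf^\uOpAlg_\Set(Z) = \bk\frakM(Z)$, so that $b_i = \pi(u_i)$. The key point here, which I regard as the only nontrivial step, is the naturality of OPI-evaluation: applying the definition of $\phi(b_1,\dots,b_n)$ through the universal property of $\bk\frakM(X)$, together with the fact that $\pi$ is a morphism of unital operated algebras, yields
\[
\phi(b_1,\dots,b_n) = \phi(\pi(u_1),\dots,\pi(u_n)) = \pi\bigl(\phi(u_1,\dots,u_n)\bigr).
\]
But $\phi(u_1,\dots,u_n) \in R_\Phi(Z) \subseteq \langle R_\Phi(Z)\rangle_\uOpAlg = \ker \pi$, so the right-hand side is zero. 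Hence $B$ satisfies every OPI in $\Phi$.

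Next I would check the universal property. Let $(A,P)$ be a $\Phi$-algebra and $\theta: Z \to A$ a set-theoretic map. By the universal property of $\calf^\uOpAlg_\Set(Z)$ (which gives the free unital operated algebra on a set, established in the composite discussed in Section 3), there exists a unique morphism $\bar\theta: \bk\frakM(Z) \to (A,P)$ of unital operated algebras restricting to $\theta$ on $Z$. For any $\phi(u_1,\dots,u_n)\in R_\Phi(Z)$, again by naturality of OPI-evaluation,
\[
\bar\theta\bigl(\phi(u_1,\dots,u_n)\bigr) = \phi(\bar\theta(u_1),\dots,\bar\theta(u_n)) = 0,
\]
the last equality because $A$ satisfies $\phi$. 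Thus $R_\Phi(Z) \subseteq \ker \bar\theta$, and since $\ker \bar\theta$ is an operated ideal the inclusion extends to $\langle R_\Phi(Z)\rangle_\uOpAlg \subseteq \ker \bar\theta$. Therefore $\bar\theta$ factors uniquely through $\pi$ as $\tilde\theta: B \to (A,P)$ with $\tilde\theta \circ \pi = \bar\theta$, and in particular $\tilde\theta$ restricted to $Z$ equals $\theta$.

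For uniqueness: any morphism $\tilde\theta': B \to (A,P)$ of unital operated algebras extending $\theta$ gives a morphism $\tilde\theta' \circ \pi: \bk\frakM(Z) \to (A,P)$ extending $\theta$, which must equal $\bar\theta$ by the uniqueness part of the universal property of $\bk\frakM(Z)$. Since $\pi$ is surjective, this forces $\tilde\theta' = \tilde\theta$. The main obstacle throughout is the naturality of OPI-evaluation under morphisms of unital operated algebras; once this is clearly stated and used, the rest is a routine application of the universal properties already available.
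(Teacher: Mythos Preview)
Your proof is correct and is the standard verification of the universal property. Note that the paper does not supply its own proof of this proposition: it is stated with a reference to \cite[Proposition 1.3.6]{Cohn}, so there is nothing to compare against directly. Your argument is essentially a fleshed-out version of the proof the paper gives for the closely analogous Proposition~\ref{Prop: free Phi-algebra over  a given algebra via universal algebra}.
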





\begin{defn}[\cite{GSZ, GGSZ, GaoGuo17}] An OPI $\phi(x_1,\dots,x_n)\in  \bk\frakM(X)$ is said to be multilinear if it is linear in each $x_i,~1\leq i\leq n$.

\end{defn}
\begin{remark} \label{Rem: in multilinear case, free Phi-algebra over  a given algebra via universal algebra}
	\begin{itemize}
\item[(a)] When   the set $\Phi$ consists of  multilinear OPIs,  for any given set $Z$,  $R_{\Phi}(Z)$ becomes a $\bk$-space and $R_{\Phi}(Z)=\bk S_{\Phi}(Z)$, thus the free  $\Phi$-algebra generated by $Z$  can also be written as $$\calf_{\Set}^{\Phi\zhx \Alg}(Z)=\calf^\uOpAlg_\Set(Z)\slash \left\langle S_\Phi(Z)\right\rangle_\uOpAlg.$$
\item[(b)] The multilinear property on OPIs is not strong in the sense that one can always use a  polarization  so long as the  OPIs in question are multilinear, at least when the base field is of characteristic zero; see \cite[Corollary 2.18]{GaoGuoR}. \textit{We will assume from now on that all OPIs are multilinear. }

\end{itemize} 	
\end{remark}

Similar to Proposition~\ref{Prop: free Phi algebra generated by a set}, we can    construct  the free $\Phi$-algebra generated by a given algebra as well.

\begin{prop}\label{Prop: free Phi-algebra over  a given algebra via universal algebra}
	Let    $\Phi\subseteq  \bk\frakM(X)$ be a set of OPIs. Given     a unital $\bk$-algebra $A$,  the quotient operated algebra $\calf_{\uAlg}^{\Phi\zhx \Alg}(A):=\calf^{\uOpAlg}_{\uAlg}(A)/{\left\langle R_{\Phi}(A)\right\rangle_\uOpAlg}$ is the free $\Phi$-algebra generated by the unital algebra $A$.
\end{prop}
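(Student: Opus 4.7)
The plan is to mimic the argument for Proposition~\ref{Prop: free Phi algebra generated by a set}, with the free operated algebra $\calf^{\uOpAlg}_{\Set}(Z)$ replaced by $\calf^{\uOpAlg}_{\uAlg}(A)$ and the role of the set $Z$ replaced by the underlying unital algebra $A$. Set $B:=\calf^{\uOpAlg}_{\uAlg}(A)/\langle R_\Phi(A)\rangle_\uOpAlg$ and let $\pi:\calf^{\uOpAlg}_{\uAlg}(A)\to B$ be the canonical projection. Let $j:A\hookrightarrow \calf^{\uOpAlg}_{\uAlg}(A)$ be the unit of the adjunction of Section~3, and set $i:=\pi\circ j:A\to B$, which is a homomorphism of unital algebras.

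First I would check that $B$ is a $\Phi$-algebra. Since $\langle R_\Phi(A)\rangle_\uOpAlg$ is an operated ideal, $B$ inherits the structure of a unital operated algebra and $\pi$ becomes a morphism of unital operated algebras. Given any $\phi(x_1,\dots,x_n)\in\Phi$ and elements $b_1,\dots,b_n\in B$, surjectivity of $\pi$ lets me pick lifts $u_1,\dots,u_n\in\calf^{\uOpAlg}_{\uAlg}(A)$ with $\pi(u_k)=b_k$. Because $\pi$ preserves multiplication, unit, and the operator, and OPIs are built from these operations (here the multilinearity assumption in Remark~\ref{Rem: in multilinear case, free Phi-algebra over  a given algebra via universal algebra} guarantees that evaluation is well behaved), we obtain
\[
\phi(b_1,\dots,b_n)=\phi(\pi(u_1),\dots,\pi(u_n))=\pi\bigl(\phi(u_1,\dots,u_n)\bigr)=0,
\]
the last equality because $\phi(u_1,\dots,u_n)\in R_\Phi(A)\subset\langle R_\Phi(A)\rangle_\uOpAlg=\ker\pi$.

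Next I would verify the universal property. Let $(C,P_C)$ be a $\Phi$-algebra and let $f:A\to C$ be a homomorphism of unital algebras. The adjunction $\calf^{\uOpAlg}_{\uAlg}\dashv\calu^{\uOpAlg}_{\uAlg}$ established in Subsection~\ref{Free object functor from unital algebras to unital operated algebras} produces a unique morphism $\hat f:\calf^{\uOpAlg}_{\uAlg}(A)\to(C,P_C)$ of unital operated algebras with $\hat f\circ j=f$. The point is to show that $\hat f$ annihilates the operated ideal $\langle R_\Phi(A)\rangle_\uOpAlg$. For any $\phi\in\Phi$ and any $u_1,\dots,u_n\in\calf^{\uOpAlg}_{\uAlg}(A)$, since $\hat f$ commutes with multiplication, unit, and the operator,
\[
\hat f\bigl(\phi(u_1,\dots,u_n)\bigr)=\phi\bigl(\hat f(u_1),\dots,\hat f(u_n)\bigr)=0
\]
in $C$, as $C$ is a $\Phi$-algebra. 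Hence $R_\Phi(A)\subseteq\ker\hat f$, and since $\ker\hat f$ is already an operated ideal, we get $\langle R_\Phi(A)\rangle_\uOpAlg\subseteq\ker\hat f$. Consequently $\hat f$ descends through $\pi$ to a unique morphism $\tilde f:B\to(C,P_C)$ of unital operated algebras with $\tilde f\circ\pi=\hat f$, so in particular $\tilde f\circ i=\tilde f\circ\pi\circ j=\hat f\circ j=f$. Uniqueness of $\tilde f$ follows from the uniqueness of $\hat f$ together with the surjectivity of $\pi$, exactly as in the proof of Proposition~\ref{Prop: algebra ideal considered as operated ideal}.

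I do not expect any substantial obstacle: the only delicate point is the first step, namely justifying that after passing to the quotient every evaluation of every $\phi\in\Phi$ on elements of $B$ really lands in $0$. This is where the standing assumption that the OPIs in $\Phi$ are multilinear (Remark~\ref{Rem: in multilinear case, free Phi-algebra over  a given algebra via universal algebra}) is used implicitly, in order to identify the image under $\pi$ of $\phi(u_1,\dots,u_n)$ with $\phi(\pi(u_1),\dots,\pi(u_n))$; once this naturality is in place, the remainder is a formal diagram chase along the adjunction.
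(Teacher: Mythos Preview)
Your proof is correct and follows essentially the same route as the paper's (much terser) argument: invoke the universal property of $\calf^{\uOpAlg}_{\uAlg}(A)$, observe that the induced morphism to any $\Phi$-algebra kills $R_\Phi(A)$, and hence factors through the quotient. One minor point: the identity $\pi(\phi(u_1,\dots,u_n))=\phi(\pi(u_1),\dots,\pi(u_n))$ holds for \emph{any} morphism of unital operated algebras $\pi$, simply by the definition of evaluation via the universal property of $\bk\frakM(X)$, so multilinearity is not actually needed at that step.
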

\begin{proof} Let   $B$ be a   $\Phi$-algebra. Given a homomorphism of unital    algebras from $A$ to $\calu^{\Phi\zhx \Alg}_\uAlg(B)$,
by the universal property of   $\calf^{\uOpAlg}_\uAlg(A)$, we get a morphism of unital operated algebras from $\calf^{\uOpAlg}_\uAlg(A)$ to $B$.
Since $B$ is a  $\Phi$-algebra, any morphism of unital operated algebras from $\calf^{\uOpAlg}_\uAlg(A)$ to $B$	will factor through the $\Phi$-algebra $\calf^{\uOpAlg}_\uAlg(A)/\langle R_\Phi(A)\rangle_\uOpAlg$, thus we obtain a morphism of   $\Phi$-algebras from  $\calf^{\uOpAlg}_\uAlg(A)/\langle R_\Phi(A)\rangle_\uOpAlg$ to $B$ fulfilling the required universal property.  \end{proof}

	


\begin{prop}\label{Prop: from unital algebra to Phi algebra with relations}
	Let $X$ be a set and $\Phi\subseteq \bk\frakM(X)$ a system of OPIs. Let $A=\bk \calm (Z) \slash I_A$ be a unital algebra with generating set $Z$.  Then we have:
	$$ \calf_{\uAlg}^{\Phi\zhx\Alg}(A)=\bk\frakM(Z)\slash\left\langle  S_{\Phi}(Z)\cup I_A\right\rangle_\uOpAlg.$$

\end{prop}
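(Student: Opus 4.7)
The plan is to chain together Propositions~\ref{Prop: free Phi-algebra over  a given algebra via universal algebra} and \ref{Prop: algebra ideal considered as operated ideal} and then identify the two operated ideals.

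First, by Proposition~\ref{Prop: free Phi-algebra over  a given algebra via universal algebra} applied to the unital algebra $A$,
$$\calf_{\uAlg}^{\Phi\zhx\Alg}(A) \;=\; \calf^{\uOpAlg}_{\uAlg}(A)\big/\bigl\langle R_{\Phi}(A)\bigr\rangle_{\uOpAlg}.$$
Since $A = \bk\calm(Z)/I_A = \TT(\bk Z)/I_A$, I can apply Proposition~\ref{Prop: algebra ideal considered as operated ideal} with $V=\bk Z$. Observing that $\calf_{\Vect}^{\uOpAlg}(\bk Z)=\calf_{\Set}^{\uOpAlg}(Z)=\bk\frakM(Z)$ (this is the naturality in the cube noted at the end of Section~3), this gives
$$\calf^{\uOpAlg}_{\uAlg}(A)\;=\;\bk\frakM(Z)\big/\bigl\langle I_A\bigr\rangle_{\uOpAlg}.$$
Thus $\calf_{\uAlg}^{\Phi\zhx\Alg}(A)$ is obtained by taking two successive quotients of $\bk\frakM(Z)$. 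By the standard third-isomorphism-style collapse for operated ideals, it suffices to show that, as subsets of $\bk\frakM(Z)$,
$$\bigl\langle I_A\bigr\rangle_{\uOpAlg} + \bigl\langle \widetilde{R_\Phi(A)}\bigr\rangle_{\uOpAlg} \;=\; \bigl\langle I_A \cup S_\Phi(Z)\bigr\rangle_{\uOpAlg},$$
where $\widetilde{R_\Phi(A)}$ denotes any set of lifts of $R_\Phi(A)$ to $\bk\frakM(Z)$.

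The key step is therefore to control how the $\Phi$-evaluations $\phi(u_1,\dots,u_n)$ with $u_i\in\calf^{\uOpAlg}_{\uAlg}(A)$ look once lifted to $\bk\frakM(Z)$. Choose for each $u_i$ a lift $\tilde u_i\in\bk\frakM(Z)$ and expand it in the canonical basis $\frakM(Z)$. Since every $\phi\in\Phi$ is multilinear (see Remark~\ref{Rem: in multilinear case, free Phi-algebra over  a given algebra via universal algebra}), $\phi(\tilde u_1,\dots,\tilde u_n)$ becomes a $\bk$-linear combination of elements of the form $\phi(w_1,\dots,w_n)$ with $w_i\in\frakM(Z)$, i.e.\ an element of $\bk S_\Phi(Z)$. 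Hence any lift of $R_\Phi(A)$ lies in $\bk S_\Phi(Z) + \langle I_A\rangle_{\uOpAlg}$, and conversely $S_\Phi(Z)$ is clearly the image of elements of $R_\Phi(A)$ (take $u_i$ to be the classes of the $w_i$). This shows $\widetilde{R_\Phi(A)}$ and $S_\Phi(Z)$ generate the same operated ideal modulo $\langle I_A\rangle_{\uOpAlg}$.

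Combining everything,
$$\calf_{\uAlg}^{\Phi\zhx\Alg}(A)\;=\;\bigl(\bk\frakM(Z)/\langle I_A\rangle_{\uOpAlg}\bigr)\big/\bigl\langle R_\Phi(A)\bigr\rangle_{\uOpAlg}\;=\;\bk\frakM(Z)\big/\bigl\langle S_\Phi(Z)\cup I_A\bigr\rangle_{\uOpAlg},$$
as desired. I expect the only delicate point to be the lifting argument in the middle step; in particular, the use of multilinearity is essential, since without it the set $R_\Phi(A)$ need not be stable under $\bk$-linear combinations of its arguments and the identification with $\bk S_\Phi(Z)$ modulo $\langle I_A\rangle_{\uOpAlg}$ would fail. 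Fortunately, by the blanket assumption after Remark~\ref{Rem: in multilinear case, free Phi-algebra over  a given algebra via universal algebra}, all OPIs are multilinear, so this obstacle dissolves.
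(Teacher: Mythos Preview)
Your proposal is correct and follows essentially the same route as the paper: apply Proposition~\ref{Prop: free Phi-algebra over  a given algebra via universal algebra}, then Proposition~\ref{Prop: algebra ideal considered as operated ideal}, and finally use multilinearity (Remark~\ref{Rem: in multilinear case, free Phi-algebra over  a given algebra via universal algebra}) to identify the operated ideals. The paper's proof is terser---it writes the chain $\calf^{\uOpAlg}_{\uAlg}(A)/\langle R_{\Phi}(A)\rangle_\uOpAlg = \calf^{\uOpAlg}_{\Vect}(\bk Z)/\langle R_{\Phi}(Z)\cup I_A\rangle_\uOpAlg = \bk\frakM(Z)/\langle S_{\Phi}(Z)\cup I_A\rangle_\uOpAlg$ without spelling out the lifting argument---whereas you make explicit the third-isomorphism collapse and the role of multilinearity in reducing lifts of $R_\Phi(A)$ to $\bk S_\Phi(Z)$ modulo $\langle I_A\rangle_\uOpAlg$.
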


\begin{proof}
	By Proposition~\ref{Prop: algebra ideal considered as operated ideal} and Remark~\ref{Rem: in multilinear case, free Phi-algebra over  a given algebra via universal algebra}, we have
	$$	\begin{array}{lcl}		\calf_{\uAlg}^{\Phi\zhx\Alg}(A)&=&\calf^{\uOpAlg}_{\uAlg}(A)\slash\left\langle R_{\Phi}(A)\right\rangle_\uOpAlg	\\
		&=&\calf^{\uOpAlg}_{\Vect}(\bk Z)\slash\left\langle R_{\Phi}(Z)\cup I_{A}\right\rangle_\uOpAlg\\
		&=&\bk\frakM(Z)\slash\left\langle S_{\Phi}(Z)\cup  I_{A}\right\rangle_\uOpAlg.\\
		\end{array}	$$ \end{proof}

\section{Gr\"obner-Shirshov bases for free $\Phi$-algebras over algebras}\

 In this section,  as a generalisation of the well developed theory of  Gr\"obner-Shirshov bases (aka GS bases)   for associative algebras  \cite{Shirshov,Buc,Green,BokutChen14,BokutChen20},   the  GS basis theory for  operated algebras  is recalled following \cite{BokutChenQiu, GSZ, GGSZ, GaoGuo17}.  Inspired by \cite{LeiGuo, GuoLi},  we will  also consider  the general problem of  GS bases  for free $\Phi$-algebra generated by a given algebra.


\begin{defn} [\cite{BokutChenQiu, GSZ, GGSZ, GaoGuo17}]
	Let $Z$ be a set, $\star$ a symbol not in $Z$.
	\begin{itemize}
		\item [(a)] Define $\frakM^\star(Z)$ to be the subset of $\frakM(Z\cup\star)$ consisting of elements with $\star$ occurring only once.
		\item [(b)] For $q\in\frakM^\star(Z)$ and $u\in   \frakM(Z)$,	we define $q|_{u}\in \frakM(Z)$ obtained by
		replacing the symbol $\star$ in $q$ by $u$.
		\item [(c)] For $q\in\frakM^\star(Z)$ and $s=\sum_ic_iu_i\in \bk \frakM(Z)$  with $c_i\in\bk$ and $u_i\in\frakM(Z)$, we define
		$$q|_s:=\sum_ic_iq|_{u_i}.$$
		\item [(d)]  An element $u\in\frakM(Z)$ is a subword of another element $w\in\frakM(Z)$ if $w=q|_{u}$ for some $q\in\frakM^\star(Z)$.
		\item [(e)] For $ f=\sum_ic_iq_i|_u\in\frakM(Z)$  with  $q_i\in\frakM^{\star} (Z)$, $c_i\in\bk$ and $u\in\frakM(Z)$, we define
		its substitution of $u$  by $s\in\frakM(Z)$ to be $$f|_{u\rightarrow s}:=\sum_ic_iq_i|_s.$$
	\end{itemize}
\end{defn}

\begin{defn} [\cite{BokutChenQiu, GSZ, GGSZ, GaoGuo17}]
	Let $Z$ be a set,  $\leq$ a linear order on $\frakM(Z)$ and $f \in \bk \frakM(Z)$.
	\begin{itemize}
		\item [(a)] Let $f\notin \bk$. The leading monomial of $f$ , denoted by $\bar{f}$, is the largest monomial appearing in $f$. The leading coefficient of $f$ , denoted by $c_f$, is the coefficient of $\bar{f}$ in $f$. We call $f$	monic with respect to $\leq$ if $c_f = 1$.
		\item [(b)] Let $f\in \bk$ (including the case $f=0$).  We define the leading monomial of $f$ to be $1$ and the	leading coefficient of $f$ to be $c_f=f$.
		\item [(c)] A subset $S\subseteq \bk\frakM(Z)$ is called monicized with respect to $\leq$,  if each nonzero element of $S$ has leading coefficient $1$. Obviously, each subset $S\subseteq \frakM(Z)$ can be made monicized  if we divide  each nonzero  element by    its  leading coefficient.
	\end{itemize}
\end{defn}
\begin{defn} [\cite{BokutChenQiu, GSZ, GGSZ, GaoGuo17}]
	Let $Z$ be a set. We deonte $u< v $ if $u\leq v$ but $u\neq v$ for an order $\leq$.
	\begin{itemize}	
		\item [(a)] A monomial order on $\calm(Z)$ is a well-order $\leq$ on $\calm(Z)$ such that
		$$  u < v \Rightarrow  wuz < wvz  \text{ for any }u, v, w,z\in \calm(Z)$$
	(	e.g the  deg-lex order, see Definition \ref{Def: deg-lex order}).
		\item [(b)] A monomial order on $\frakM(Z)$ is a well-order $\leq$ on $\frakM(Z)$ such that
		$$u< v \Rightarrow q|_u<q|_v\quad\text{for all }u,v\in\frakM(Z)\text{ and } q\in \frakM^\star(Z). $$
		\end{itemize}
	
\end{defn}

We need  another notation.
Let $Z$ be a set. For  $u\in\frakM(Z)$ with $u\neq1$, as  $u$ can be uniquely written as a product $ u_1 \cdots u_n $ with $ u_i \in Z \cup \left \lfloor\frakM(Z)\right \rfloor$ for $1\leq i\leq n$, call $n$ the breadth of $u$, denoted by $|u|$; for $u=1$, we define
		$ |u| = 0 $.
\begin{defn} [\cite{BokutChenQiu, GSZ, GGSZ, GaoGuo17}]
	Let $\leq$ be a monomial order on $\frakM(Z)$ and $ f, g \in \bk\frakM(Z) $ be monic.
	\begin{itemize}
		\item[(a)]If there are $w,u,v\in \frakM(Z)$ such that $w=\bar{f}u=v\bar{g}$ with max$\left\lbrace |\bar{f}| ,|\bar{g}|   \right\rbrace < \left| w\right| < |\bar{f}| +|\bar{g}|$, we call
		$$\left( f,g\right) ^{u,v}_w:=fu-vg$$
		the intersection composition of $f$ and $g$ with respect to $w$.
		\item[(b)] If there are $w\in\frakM(Z)$ and $q\in\frakM^\star(Z)$ such that $w=\bar{f}=q|_{\bar{g}}$, we call
		$$\left( f,g\right)^q_w:=f-q|_g $$
		the inclusion composition of $f$ and $g$ with respect to $w$.
		
	\end{itemize}
\end{defn}
\begin{defn} [\cite{BokutChenQiu, GSZ, GGSZ, GaoGuo17}]
	Let $Z$ be a set and $\leq$ a monomial order on $\frakM(Z)$. Let $\calg\subseteq \bk\frakM(Z) $.
	\begin{itemize}
		\item[(a)]An element $f\in \bk\frakM(Z) $ is called trivial modulo $(\calg,w)$ for $w\in \frakM(Z)$ if
		$$f=\underset{i}{\sum}c_iq_i|_{s_i}\text{ with }q_i|_{\bar{s_i}}<w\text{, where }c_i\in \bk,\ q_i\in\frakM^\star(Z)\  \mathrm{and}\  s_i\in \calg.$$
		\item[(b)]  The subset $\calg\subseteq \bk\frakM(Z) $ is called a  GS basis in $\bk\frakM(Z) $ with respect to $\leq$ if, for all pairs $f,g\in \calg$ monicized with respect to $\leq$, every  intersection composition of the form $\left( f,g\right) ^{u,v}_w$ is trivial modulo $(\calg,w)$,  and every inclusion composition of the form $\left( f,g\right)^q_w$ is trivial modulo $(\calg,w)$.
	\end{itemize}
\end{defn}

\textit{ {To distinguish from usual GS bases for associative algebras,  from now on, we shall rename       GS bases  in operated contexts by operated GS bases.}}


\begin{theorem} [\cite{BokutChenQiu, GSZ, GGSZ, GaoGuo17}]\label{Thm: CD}
	(Composition-Diamond Lemma) Let $Z$ be a set, $\leq$ a monomial order on $\frakM(Z)$ and $\calg\subseteq \bk\frakM(Z) $. Then the following conditions are equivalent:
	\begin{itemize}
		\item[(a)] $\calg$ is an operated GS basis in $\bk\frakM(Z) $.
		\item[(b)] Let $\eta:\bk\frakM(Z) \rightarrow\bk\frakM(Z) \slash\left\langle \calg \right\rangle_\uOpAlg$ be the quotient morphism. Denote
		$$\Irr(\calg):=\frakM(Z)\backslash \left\lbrace q|_{\overline{s}}~|~s\in \calg,\ \ q\in\frakM^\star(Z)\right\rbrace. $$
		As a $\bk$-space, $\bk\frakM(Z) =\bk\Irr(\calg)\oplus\left\langle \calg \right\rangle_\uOpAlg$ and     $\eta(\Irr(\calg))$ is a $\bk$-basis of $\bk\frakM(Z)\slash\left\langle\calg \right\rangle_\uOpAlg$.
	\end{itemize}
\end{theorem}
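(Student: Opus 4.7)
The plan is to prove both implications via a normal form reduction compatible with the monomial order, with the hard direction relying on a case analysis of subword overlaps. Observe first that the second half of (b), namely that $\eta(\Irr(\calg))$ is a $\bk$-basis of the quotient, is an automatic consequence of the direct sum decomposition $\bk\frakM(Z) = \bk\Irr(\calg) \oplus \langle\calg\rangle_\uOpAlg$: $\eta$ restricted to $\bk\Irr(\calg)$ is then injective (its kernel is the intersection, which vanishes) and surjects onto the quotient (by the spanning part of the sum). So the task reduces to proving (a) is equivalent to this direct sum statement.

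For (b) $\Rightarrow$ (a), I would pick a composition $h = (f,g)^{u,v}_w$ or $h = (f,g)^q_w$ and note that by construction $h \in \langle\calg\rangle_\uOpAlg$ with $\bar h < w$. I then run the following reduction: given $h' \in \bk\frakM(Z)$, if $\bar{h'} = p|_{\bar s}$ for some $s \in \calg$ and $p \in \frakM^\star(Z)$, replace $h'$ by $h' - c_{h'} p|_{s}$, which has strictly smaller leading monomial; if $\bar{h'} \in \Irr(\calg)$, peel off $c_{h'} \bar{h'}$ into an ``irreducible accumulator'' and continue with $h' - c_{h'} \bar{h'}$. Well-ordering guarantees termination and produces a decomposition $h = r + g_0$ with $r \in \bk\Irr(\calg)$ and $g_0 = \sum_i c_i p_i|_{s_i}$ where every $p_i|_{\bar{s_i}} \leq \bar h < w$. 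Since $r = h - g_0$ lies in $\bk\Irr(\calg) \cap \langle\calg\rangle_\uOpAlg = 0$ by (b), we conclude $h = g_0$, which is trivial modulo $(\calg, w)$.

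For (a) $\Rightarrow$ (b), the spanning inclusion $\bk\frakM(Z) = \bk\Irr(\calg) + \langle\calg\rangle_\uOpAlg$ follows at once from the same reduction applied to arbitrary elements. The content is directness. Suppose for contradiction that some nonzero $f$ lies in $\bk\Irr(\calg) \cap \langle\calg\rangle_\uOpAlg$. Write $f = \sum_i c_i q_i|_{s_i}$ with $s_i \in \calg$ monic and $c_i \neq 0$, set $w_i := q_i|_{\bar{s_i}}$, $w := \max_i w_i$, $N := |\{i : w_i = w\}|$, and among all representations select one minimizing $(w, N)$ lexicographically. Since every monomial of $f$ is irreducible, $\bar f = w$ is impossible (as $w$ is of the form $q|_{\bar s}$ and so not in $\Irr(\calg)$), hence $\bar f < w$, which forces cancellation at level $w$ and so $N \geq 2$.

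The main obstacle is the ensuing case analysis on how two subwords $\bar{s_i}, \bar{s_j}$ with $w_i = w_j = w$ can sit inside $w$. Three configurations arise: disjoint occurrences, one strictly contained in the other (an inclusion composition), or proper overlap at the outermost level (an intersection composition). In each case, the triviality of the corresponding composition granted by (a), combined with the substitution operator $f|_{u \to s}$, allows me to rewrite $c_i q_i|_{s_i} + c_j q_j|_{s_j}$ as a sum of terms $c_k' q_k'|_{s_k'}$ with every $q_k'|_{\bar{s_k'}} < w$, producing a representation of $f$ with strictly smaller $w$ or the same $w$ but smaller $N$, contradicting minimality. The delicacy specific to the operated setting, as opposed to the classical associative case, is the bookkeeping when one occurrence of $\bar{s_i}$ is nested inside a $\lfloor \cdot \rfloor$ that itself sits inside the context $q_j$; here the contexts must be reassembled through iterated bracket applications, and this is precisely the point at which the definitions of inclusion/intersection composition and of the substitution $f|_{u \to s}$ have to be deployed with care.
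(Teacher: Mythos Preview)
The paper does not prove this theorem; it is stated with citations to \cite{BokutChenQiu, GSZ, GGSZ, GaoGuo17} and used as a black box. Your outline is precisely the standard proof given in those references: normal-form reduction (terminating because a monomial order is a well-order) yields both the spanning part of (b) and the implication (b)$\Rightarrow$(a), while (a)$\Rightarrow$(b) (directness) goes via a minimal-counterexample argument with the three-case overlap analysis you describe. Your identification of the only subtle point---the disjoint/inclusion/intersection trichotomy for two occurrences $q_i|_{\bar{s_i}} = q_j|_{\bar{s_j}} = w$ in the bracketed-word setting, where nesting inside $\lfloor\cdot\rfloor$ must be tracked---is exactly right, and the cited papers carry this out in detail. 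There is nothing to compare: your plan is the standard one, and the paper offers no alternative.
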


\begin{prop}\label{GS-basis for algebras considered as GS-basis for operated algebras}
	Let $Z$ be a set and $\leq$ a monomial order on $\frakM(Z)$. Clearly when  restricted to $ \calm(Z)$, it is still a monomial order.
	 Then a GS basis $G \subseteq \bk \calm(Z) $    with respect to the restriction of $\leq$ to $ \calm(Z)$ is also an operated GS basis in  $\bk\frakM(Z) $ with respect to $\leq$. 
\end{prop}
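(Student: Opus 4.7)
The plan is to show that every operated composition among elements of $G$ reduces to an associative composition, and then that associative triviality implies operated triviality. The key observation is structural: since every $g \in G$ lies in $\bk\calm(Z)$, its leading monomial $\bar g$ is a bracket-free word, and the ``bracket-free'' subset $\calm(Z) \subseteq \frakM(Z)$ is closed under taking subwords.

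First I would analyze the two types of compositions. For an intersection composition $(f,g)_w^{u,v} = fu - vg$ with $f, g \in G$ and $w = \bar f u = v \bar g$ of breadth strictly between $\max\{|\bar f|,|\bar g|\}$ and $|\bar f|+|\bar g|$: since $\bar f, \bar g \in \calm(Z)$, the two factorizations of $w$ force a genuine overlap of the two bracket-free words, so $w \in \calm(Z)$ and the complementary factors $u, v$ are themselves bracket-free subwords. Hence $(f,g)_w^{u,v}$ coincides with the ordinary intersection composition of $f$ and $g$ in $\bk\calm(Z)$. For an inclusion composition $(f,g)_w^q = f - q|_g$ with $\bar f = q|_{\bar g}$: because $\bar f \in \calm(Z)$ and inserting the bracket-free word $\bar g$ for $\star$ produces a bracket-free word, the context $q \in \frakM^\star(Z)$ must itself be bracket-free, i.e.\ $q \in \calm^\star(Z)$. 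Thus again $(f,g)_w^q$ coincides with the ordinary inclusion composition in $\bk\calm(Z)$.

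Next I would transport triviality from the associative to the operated setting. By hypothesis $G$ is a GS basis in $\bk\calm(Z)$ with respect to the restricted order, so every such composition can be written as
\[
\sum_i c_i\, a_i s_i b_i, \qquad a_i \bar{s_i} b_i < w,
\]
with $c_i \in \bk$, $a_i, b_i \in \calm(Z)$ and $s_i \in G$. Setting $q_i := a_i \star b_i \in \calm^\star(Z) \subseteq \frakM^\star(Z)$ rewrites this sum as $\sum_i c_i q_i|_{s_i}$ with $q_i|_{\bar{s_i}} < w$, which is precisely the definition of triviality modulo $(G,w)$ in $\bk\frakM(Z)$. I would also briefly note that the leading monomial of an element of $\bk\calm(Z)$ is unchanged under the passage from the restricted order on $\calm(Z)$ to the full order on $\frakM(Z)$, since all of its monomials lie in $\calm(Z)$.

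The main obstacle, such as it is, lies in step one: verifying carefully that overlaps and inclusions of bracket-free leading monomials cannot introduce brackets into $u$, $v$, or $q$. This is really a statement about the unique factorization of words in $\frakM(Z)$ as products of letters in $Z \cup \lfloor \frakM(Z) \rfloor$ (the breadth decomposition), combined with the fact that $\bar f, \bar g$ have no $\lfloor\cdot\rfloor$-factors, so neither do any of the words surrounding or embedding them. Once this structural observation is in place, the two reductions above are immediate and the result follows.
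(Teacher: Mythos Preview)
Your proposal is correct and follows essentially the same approach as the paper: the paper's proof simply asserts that for $f,g\in G\subseteq \bk\calm(Z)$ one has $w,u,v\in\cals(Z)$ in any intersection composition $(f,g)_w^{u,v}$, hence triviality in $\bk\calm(Z)$ carries over to $\bk\frakM(Z)$, and remarks that inclusion compositions are similar. Your write-up spells out in more detail exactly why the bracket-free structure of $\bar f,\bar g$ forces $u,v,q$ (and hence $w$) to be bracket-free, and makes explicit the passage from associative triviality to operated triviality via $q_i=a_i\star b_i$, but the underlying idea is identical.
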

\begin{proof}
	For any intersection composition $\left( f,g\right) ^{u,v}_w$ in $\bk\frakM(Z) $ with $f,g\in  G \subseteq \bk \calm(Z) $, we have $w,u,v\in \cals(Z)$ and $\left( f,g\right) ^{u,v}_w$ is trivial modulo $(G,w)$ in $\bk\frakM(Z) $;  the case of inclusion compositions is similar. It follows that $G$ is a GS  basis in  $\bk\frakM(Z)$. \end{proof}

Now, let's consider operated  GS  bases for free $\Phi$-algebras over  unital algebras.   Recall that we always assume   that all OPIs are multilinear.

\begin{defn}[\cite{GaoGuo17}]
	Let $X$ be a set and $\Phi\subseteq \bk\frakM(X)$ a system of OPIs. Let $Z$ be a set and $\leq$ a monomial
	order on $\frakM(Z)$. We call $\Phi$ operated GS  on $Z$ with respect to $\leq$ if $S_{\Phi}(Z)$ is a
	GS basis  in $\bk\frakM(Z)$ with respect to $\leq$.
	We call $\Phi$ operated GS  if, for each set $Z$, there is a monomial order $\leq$ on $\frakM(Z)$ such that $\Phi$ is GS  on $Z$ with respect to $\leq$.

\end{defn}

\begin{theorem}\label{Thm: GS basis for free Phi algebra over alg}
	Let $X$ be a set and $\Phi\subseteq \bk\frakM(X)$ a system of OPIs. Let $A=\bk \calm (Z) \slash I_A$ be a unital algebra with generating set $Z$.
	Assume that $\Phi$ is operated GS  on $Z$ with respect to a monomial order $\leq$ in $\frakM(Z)$ and that  $G$ is a GS    basis of $I_{A}$ in $\bk \calm (Z)$ with respect to the restriction of $\leq$ to $ \calm(Z)$.

 Suppose that the leading monomial  of any OPI $\phi(x_1, \dots, x_n)\in \Phi$ has no subword in $\cals(X)\backslash X$,  and  that  for all $u_1, \dots, u_n\in \frakM (Z)$,  $\phi(u_1, \dots, u_n) $   vanishes or its  leading monomial is still    $\overline{\phi}(u_1, \dots, u_n) $.  Then $S_{\Phi}(Z)\cup G$ is an operated  GS basis of $\left\langle S_{\Phi}(Z)\cup I_A\right\rangle_\uOpAlg$ in $\bk\frakM(Z)$ with respect to $\leq$.

\end{theorem}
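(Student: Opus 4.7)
The plan is to apply the Composition--Diamond Lemma (Theorem \ref{Thm: CD}), which reduces the claim to verifying that every intersection composition $(f,g)^{u,v}_w$ and every inclusion composition $(f,g)^q_w$ between monic pairs drawn from $S_{\Phi}(Z)\cup G$ is trivial modulo $(S_{\Phi}(Z)\cup G, w)$. These compositions split into three families: both elements in $S_{\Phi}(Z)$, both in $G$, or mixed. Compositions of the first kind are trivial by the hypothesis that $\Phi$ is operated GS on $Z$, and those of the second kind are trivial by Proposition \ref{GS-basis for algebras considered as GS-basis for operated algebras} applied to $G\subseteq\bk\calm(Z)$. Accordingly, all of the real work lies in the mixed case.

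For the mixed compositions the two structural hypotheses combine decisively. Since $\bar{f}\in\calm(Z)$ is bracket-free while $\bar{g}=\overline{\phi}(u_1,\dots,u_n)$ is pinned down by the second hypothesis, the assumption that $\overline{\phi}$ has no subword in $\cals(X)\setminus X$ forces that at each nesting level of $\overline{\phi}$ no two variables are adjacent. Hence every maximal bracket-free factor of $\bar{g}$ is contained in a single $u_j$. This localization immediately excludes the inclusion $(f,g)^q_w$ with $f\in G$ and $g\in S_{\Phi}(Z)$ (a bracket in $\bar{g}$ prevents $\bar{g}$ from being a subword of the bracket-free $\bar{f}$, with the degenerate case $\overline{\phi}\in X$ being trivial).

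For the remaining mixed compositions I will write the $u_j$ hosting the overlap (or inclusion) as $u_j=\tilde{q}|_{\bar{g}}$ for some $\tilde{q}\in\frakM^{\star}(Z)$. After cancelling the leading terms via the decompositions $f=\bar{f}+f_{<}$ and $g=\bar{g}+g_{<}$, the composition reduces to an expression in which $f_{<}$ and $g_{<}$ have strictly smaller leading monomial than $w$. The key manipulation is to recognize, by multilinearity of $\phi$, that substituting $\tilde{q}|_{g}$ for the $j$-th argument of $\phi$ gives an element of $\bk S_{\Phi}(Z)$ which equals $\phi(u_1,\dots,u_n)+\phi(u_1,\dots,\tilde{q}|_{g_{<}},\dots,u_n)$; pairing these $S_{\Phi}(Z)$-terms with $G$-reductions of the residual factors produces a representation $\sum_i c_i q_i|_{s_i}$ with $s_i\in S_{\Phi}(Z)\cup G$ and $q_i|_{\overline{s_i}}<w$, yielding the required triviality.

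The principal obstacle will be the intersection composition with $g\in S_{\Phi}(Z)$, where the overlap is only a proper prefix or suffix of some $u_j$, so prefixing or suffixing by $v$ or $u$ does not directly correspond to a substitution into $\phi$. The argument there requires an induction on the monomial order, and uses the hypothesis that substitution into $\phi$ preserves the leading monomial to ensure that each rewriting step, either a $G$-reduction of a subword of $u_j$ followed by re-substitution into $\phi$, or a direct $S_{\Phi}$-reduction, strictly decreases the monomial while producing only admissible contexts. The bookkeeping for this interplay between $G$ and $S_{\Phi}(Z)$ is the most delicate part of the proof.
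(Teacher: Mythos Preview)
Your overall architecture matches the paper's: reduce to mixed compositions, handle the inclusion case by localizing $\bar g$ (with $g\in G$) inside a single argument $u_j$ of $\phi$, and use multilinearity to rewrite. That part is fine and essentially identical to the paper's argument, which writes
\[
(f,g)^q_{w_1}=f|_{\bar g\to \bar g-g}+(f-\bar f)|_{\bar g\to g},
\]
observing that the first summand lies in $\bk S_\Phi(Z)$ with leading monomial $<w_1$ and the second is trivial modulo $(G,w_1)$.

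Where your proposal diverges is the intersection case, and here there is a genuine gap. You assert that ``prefixing or suffixing by $v$ or $u$ does not directly correspond to a substitution into $\phi$'' and then fall back on an unspecified induction on the monomial order. In fact the opposite is true, and this is the key observation you are missing. If an intersection composition $(f,g)^{u,v}_{w}$ with $f=\phi(u_1,\dots,u_k)\in S_\Phi(Z)$ and $g\in G$ exists, then $\bar f$ must have a right factor in $\cals(Z)$; hence $\bar\phi$ has a right factor in $\cals(X)$, and the hypothesis on $\bar\phi$ forces this right factor to be a \emph{single} variable, say $x_k$, so $\bar\phi=\psi(x_1,\dots,x_{k-1})\,x_k$. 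Then
\[
\bar f\, u=\bar\phi(u_1,\dots,u_k)\,u=\bar\phi(u_1,\dots,u_{k-1},u_ku),
\]
i.e.\ right multiplication by $u$ \emph{is} a substitution into $\phi$. Writing $u_ku=s\bar g$, one obtains the inclusion composition $(\phi(u_1,\dots,u_ku),g)^{q}_{w}$ with $q=\bar\phi(u_1,\dots,u_{k-1},s\star)$, already known to be trivial; and $(\phi(u_1,\dots,u_ku),\phi(u_1,\dots,u_k))^{q'}_{w}$ with $q'=\star u$ is trivial because $S_\Phi(Z)$ is GS. The intersection composition is the difference of these two, hence trivial. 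The case $f\in G$, $g\in S_\Phi(Z)$ is symmetric. No induction is needed; your proposed induction is not wrong in spirit, but as written it is not a proof, and it obscures the simple reduction that makes the argument work.
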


\begin{proof}
	By Proposition~\ref{Prop: from unital algebra to Phi algebra with relations},  we have
	 $ 	\calf_{\uAlg}^{\Phi\zhx\Alg}(A)=\bk\frakM(Z)\slash\left\langle S_{\Phi}(Z)\cup  I_{A}\right\rangle_\uOpAlg. $

	By Proposition~\ref{GS-basis for algebras considered as GS-basis for operated algebras},  $G$  is an operated  GS basis in  $\bk\frakM(Z) $ with respect to $\leq$ and
    by  assumption,   so is  $S_{\Phi}(Z)$. To show that $S_{\Phi}(Z)\cup G$ is an operated  GS basis of $\left\langle S_{\Phi}(Z)\cup I_A\right\rangle_\uOpAlg$ in $\bk\frakM(Z)$,    we  need to check the triviality of    any inclusion composition $\left( f,g\right)^q_{w_1}$    modulo $(S_{\Phi}(Z)\cup G,w_1)$, and also that of    any intersection composition $(f,g)^{u,v}_{w_2}$     modulo    $(S_{\Phi}(Z)\cup G,w_2)$,   for  only ($f\in S_{\Phi}(Z)$, $g\in G$) or ($f\in G$, $g\in S_{\Phi}(Z)$).


\medskip

For an inclusion composition $\left( f,g\right)^q_{w_1}$, it is easy to see that we only need to consider the case $f\in S_{\Phi}(Z)$, $g\in G$. Since for  $w_1\in\frakM(Z)$ and $q\in\frakM^\star(Z)$, we have  $w_1=\bar{f}=q|_{\bar{g}}$, we can  write $f=\phi(u_1,\dots,u_k)\in S_{\Phi}(Z)$ with $\phi(x_1,\dots,x_k)\in\Phi$ and $u_1,\dots,u_k\in \frakM(Z)$.
By assumption,  the leading monomial of any OPI  in $\Phi$ has no subword in $\cals(X)\backslash X$, so  there exists $u_i$ and $q^{\prime}\in\frakM^\star(Z)$ such that $u_i=q^{\prime}|_{\bar{g}}$. One gets $f=\sum_jc_jq_j|_{\bar{g}}$ with $q_j\in\frakM^\star(Z)$.
By multilinearity of $\phi$,
	$$\left( f,g\right)^q_{w_1}:=f-q|_g=f-f|_{\bar{g}\rightarrow g}+f|_{\bar{g}\rightarrow g}-\bar{f}|_{\bar{g}\rightarrow g}=f|_{\bar{g}\rightarrow\bar{g}-g}+(f-\bar{f})|_{\bar{g}\rightarrow g}.$$
	Observe three facts:    $f|_{\bar{g}\rightarrow\bar{g}-g}\in S_{\Phi}(Z)$, $f|_{\bar{g}\rightarrow\bar{g}-g}<w_1$ and $(f-\bar{f})|_{\bar{g}\rightarrow g}$ is trivial modulo $(G,w_1)$. Consequently, we obtain that  $\left( f,g\right)^q_{w_1}$ is trivial modulo $(S_{\Phi}(Z)\cup G,w_1)$.

\medskip

For  an intersection composition  $(f,g)^{u,v}_{w_2}$,  we only consider the case $f\in S_{\Phi}(Z)$, $g\in G$, the other case being similar.   Our method is to reduce this case  to the case of inclusion compositions.

Write  $f=\phi(u_1,\dots,u_k)\in S_{\Phi}(Z)$  with $\phi(x_1,\dots,x_k)\in\Phi$ and $u_1,\dots,u_k\in \frakM(Z)$.
For  the intersection composition  $(f,g)^{u,v}_{w_2}$,  we have $w_2=\bar{f}u=v\bar{g}$ with max$\left\lbrace |\bar{f}| ,|\bar{g}|   \right\rbrace < \left| w_2\right| < |\bar{f}| +|\bar{g}|$.   So $\bar{f}$ must have a right factor in $\cals(Z)$ and thus $\bar{\phi}$ has a right factor in $\cals(X)$. Since the leading monomial of any OPI in $\Phi$ has no subword in $\cals(X)\backslash X$, without loss of generality, we can write $\bar{\phi}(x_1,\dots,x_k)$ as $\psi(x_1,\dots,x_{k-1})x_k$, for some $\psi(x_1,\dots,x_{k-1})\in \frakM(X)$ with no right factor in $\cals(X)$. So we have $$v\bar{g}=w_2=\bar{f}u=\bar{\phi}(u_1,\dots,u_k)u=\psi (u_1,\dots,u_{k-1})u_ku=\bar{\phi}(u_1,\dots,u_ku),$$
and $u_ku=s\bar{g}$ for some $s\in \frakM(Z)$.
Let $q=\bar{\phi}(u_1,\dots,u_{k-1},s\star)\in \frakM^\star(Z)$, then $$w_2=\bar\phi(u_1,\dots,u_ku)=\bar\phi(u_1,\dots,u_{k-1},s\bar g)=q|_{\bar g}.$$
Notice that we obtain  an inclusion composition:
$$(\phi(u_1,\dots,u_ku),g)^{q}_{w_2}=\phi(u_1,\dots,u_ku)-\bar{\phi}(u_1,\dots,u_{k-1},sg)=\phi(u_1,\dots,u_ku)-vg,$$
 which  is trivial modulo $(S_{\Phi}(Z)\cup G,w_2)$ by the case of inclusion compositions  already dealt with above.

Consider two elements $\phi(u_1,\dots,u_ku)$ and $\phi(u_1,\dots,u_k)$ in $\bk\frakM(Z)$.
Let $q^\prime=\star u\in \frakM^\star(Z)$, then we have $$w_2=\bar\phi(u_1,\dots,u_ku)=\bar\phi(u_1,\dots,u_k)u=q^\prime|_{\bar\phi(u_1,\dots,u_k)}.$$
One gets  another inclusion composition: $$(\phi(u_1,\dots,u_ku),\phi(u_1,\dots,u_k))^{q^\prime}_{w_2}=\phi(u_1,\dots,u_ku)-\phi(u_1,\dots,u_k)u,$$
which is trivial modulo $(S_{\Phi}(Z),w_2)$ by the assumption that $S_{\Phi}(Z)$ is a GS  basis in $\bk\frakM(Z)$. Now it  is obvious   that $(\phi(u_1,\dots,u_ku),\phi(u_1,\dots,u_k))^{q^\prime}_{w_2}$ is also trivial modulo $(S_{\Phi}(Z)\cup G,w_2)$.

Return to our   intersection composition and we obtain that
	$$	\begin{array}{lcl}		
		\left( f,g\right) ^{u,v}_{w_2}&=&\phi(u_1,\dots,u_k)u-vg\\
		&=&\phi(u_1,\dots,u_k)u-\phi(u_1,\dots,u_ku)+\phi(u_1,\dots,u_ku)-vg\\
		&=&-(\phi(u_1,\dots,u_ku),\phi(u_1,\dots,u_k))^{q^\prime}_{w_2}+(\phi(u_1,\dots,u_ku),g)^{q}_{w_2}\\
\end{array}	$$
is trivial modulo $(S_{\Phi}(Z)\cup G,w_2)$, by the triviality of the two inclusion compositions proved before.



\end{proof}

By Theorem~\ref{Thm: CD}, we have the following result.
\begin{coro}\label{Cor: linear basis for free Phi algebra over alg}
	With the same assumption as Theorem~\ref{Thm: GS basis for free Phi algebra over alg}, denote by $$\eta:\bk\frakM(Z)\rightarrow{\calf_{\uAlg}^{\Phi\zhx\Alg}(A)=\bk\frakM(Z)\slash\left\langle { S_{\Phi}(Z)}\cup I_A\right\rangle_\uOpAlg}$$
	   the natural quotient  map.
 Then $\eta(\Irr(S_{\Phi}(Z)\cup G))$  is a $\bk$-basis of ${\calf_{\uAlg}^{\Phi\zhx\Alg}(A)}$.
\end{coro}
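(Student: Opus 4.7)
The plan is to deduce this directly from the Composition-Diamond Lemma (Theorem~\ref{Thm: CD}) once we have Theorem~\ref{Thm: GS basis for free Phi algebra over alg} in hand. First I would invoke Theorem~\ref{Thm: GS basis for free Phi algebra over alg} to conclude that $S_{\Phi}(Z)\cup G$ is an operated GS basis in $\bk\frakM(Z)$ with respect to $\leq$.

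Next, I need to identify the operated ideal it generates. Since $G$ is a GS basis of $I_A$ in $\bk\calm(Z)$, one has $\langle G\rangle_{\calm(Z)}=I_A$ in the plain associative sense, hence $I_A\subseteq \langle G\rangle_\uOpAlg$, which gives $\langle S_{\Phi}(Z)\cup I_A\rangle_\uOpAlg\subseteq \langle S_{\Phi}(Z)\cup G\rangle_\uOpAlg$. The reverse inclusion is trivial from $G\subseteq I_A$. Therefore
\[
\langle S_{\Phi}(Z)\cup G\rangle_\uOpAlg \;=\; \langle S_{\Phi}(Z)\cup I_A\rangle_\uOpAlg,
\]
and by Proposition~\ref{Prop: from unital algebra to Phi algebra with relations} this common ideal is exactly the kernel of $\eta$.

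Finally, applying Theorem~\ref{Thm: CD} to the operated GS basis $S_{\Phi}(Z)\cup G$ yields the $\bk$-space decomposition
\[
\bk\frakM(Z) \;=\; \bk\,\Irr(S_{\Phi}(Z)\cup G)\,\oplus\,\langle S_{\Phi}(Z)\cup G\rangle_\uOpAlg,
\]
and the image $\eta(\Irr(S_{\Phi}(Z)\cup G))$ forms a $\bk$-basis of the quotient $\bk\frakM(Z)/\langle S_{\Phi}(Z)\cup G\rangle_\uOpAlg$. Combining with the identification above, this is precisely a $\bk$-basis of $\calf_{\uAlg}^{\Phi\zhx\Alg}(A)$.

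Honestly, there is no real obstacle: all the substantive work has already been done in Theorem~\ref{Thm: GS basis for free Phi algebra over alg}. The only point requiring a sentence of care is the identification of $\langle S_{\Phi}(Z)\cup G\rangle_\uOpAlg$ with $\langle S_{\Phi}(Z)\cup I_A\rangle_\uOpAlg$, which follows formally from the fact that $G$ generates $I_A$ as an associative ideal (hence a fortiori as an operated ideal).
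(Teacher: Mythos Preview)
Your proposal is correct and follows the same route as the paper, which simply says ``By Theorem~\ref{Thm: CD}'' before stating the corollary. You are in fact a bit more careful than the paper in making explicit the identification $\langle S_{\Phi}(Z)\cup G\rangle_\uOpAlg=\langle S_{\Phi}(Z)\cup I_A\rangle_\uOpAlg$, which is implicit in the wording of Theorem~\ref{Thm: GS basis for free Phi algebra over alg} but not spelled out there.
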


\begin{remark}\label{Rem: diff algebra with strange leading term}
	In Theorem~\ref{Thm: GS basis for free Phi algebra over alg}, it is necessary to require that the leading monomial  of any OPI in $\Phi$ has no subword in $\cals(X)\backslash X$.

 For example, let $A:=\bk\calm(\{z_1,z_2\})\slash (z_1z_2-1)$. It is obvious that  $\{z_1z_2-1\}$ is a GS  basis in $\bk\calm(\{z_1,z_2\})$.  Consider the OPI $$\phi(x,y)=\lfloor xy\rfloor-x\lfloor y\rfloor-\lfloor x\rfloor y,$$
	which defines differential algebras of weight zero. We know that $S_{\phi}(\{z_1,z_2\})$ is an operated  GS basis in $\bk\frakM(\{z_1,z_2\})$ with respect to a certain monomial order; see \cite{GSZ}.

However, let $w=\lfloor z_1z_2\rfloor$ and $q=\lfloor \star\rfloor\in\frakM^\star(\{z_1,z_2\})$, then $w=\bar\phi(z_1,z_2)=q|_{z_1z_2}$.
    There is an inclusion composition
	$$(\phi(z_1,z_2),z_1z_2-1)_w^q=\phi(z_1,z_2)-\lfloor z_1z_2-1\rfloor=-z_1\lfloor z_2\rfloor-\lfloor z_1\rfloor z_2,$$
	which it is not trivial modulo $S_{\phi}(\{z_1,z_2\})\cup \{z_1z_2-1\}$.
	So the set $S_{\phi}(\{z_1,z_2\})\cup \{z_1z_2-1\}$ is not an operated  GS basis in $\bk\frakM(\{z_1,z_2\})$ with respect to the given monomial order.
\end{remark}

\section{Examples of operated GS bases for free $\Phi$-algebras over Algebras}

In this section, we present many examples for which Theorem~\ref{Thm: GS basis for free Phi algebra over alg} will provide operated GS bases.
These  include  all
 OPIs  of Rota-Baxter type \cite{GGSZ, GaoGuo17}, a   class of OPIs of differential type \cite{GSZ,GaoGuo17} and  two  other examples, say,  averaging algebras and Reynolds algebras.

 \subsection{Preliminaries about rewriting systems}

 We need some basic notions  about rewriting systems in order to introduce OPIs of Rota-Baxter type and of differential type. The basic references about rewriting systems are, for instance,  \cite{BN} and the recent lecture notes \cite{Malbos}.
 \begin{defn}
Let $V$ be a $\bf{k}$-space with a $\bf{k}$-basis $Z$.
	\begin{itemize}
		\item[(a)] For $f=\sum_{w \in Z} c_{w} w \in V$ with $c_{w} \in \mathbf{k},$ the support $\operatorname{Supp}(f)$ of $f$ is the set $\{w \in Z \mid c_{w} \neq 0\} .$ By convention, we take $\operatorname{Supp}(0)=\emptyset$.
		\item[(b)]  Let $f, g \in V$. We use $f \dot{+} g$ to indicate the property that $\operatorname{Supp}(f) \cap \operatorname{Supp}(g)=\emptyset .$ If this is the case, we say $f+g$ is a direct sum of $f$ and $g,$ and use $f \dot{+} g$ also for the sum $f+g$.
		\item[(c)] For $f \in V$ and $w \in \operatorname{Supp}(f)$ with the coefficient $c_{w}$, write $R_{w}(f):=c_{w} w-f \in V$. So $f=c_{w} w+\left(-R_{w}(f)\right)$.
	\end{itemize}
\end{defn}

\begin{defn}
Let $V$ be a $\bf{k}$-space with a $\bf{k}$-basis $Z$.
	\begin{itemize}
		\item[(a)] A term-rewriting system $\Pi$ on $V$ with respect to $Z$ is a binary relation $\Pi \subseteq Z \times V$. An element $(t, v) \in \Pi$ is called a (term-)rewriting rule of $\Pi,$ denoted by $t \rightarrow v$.
		\item[(b)] The term-rewriting system $\Pi$ is called simple with respect to $Z$ if $t \dot{+} v$ for all $t \rightarrow v \in \Pi$
		\item[(c)]  If $f=c_{t} t+\left(-R_{t}(f)\right) \in V,$ using the rewriting rule $t \rightarrow v,$ we get a new element $g:=c_{t} v-R_{t}(f) \in V,$ called a one-step rewriting of $f$ and denoted by $f \rightarrow_{\Pi} g$.
		\item[(d)]  The reflexive-transitive closure of $\rightarrow_{\Pi}$ (as a binary relation on $V$ ) is denoted by $\stackrel{\ast}{\rightarrow}_{\Pi}$ and, if $f \stackrel{\ast}{\rightarrow}_\Pi g,$ we say $f$ rewrites to $g$ with respect to $\Pi$.
		\item[(e)] Two elements $f,g\in V$ are joinable if there exists $h\in V$ such that $f \stackrel{\ast}{\rightarrow}_{\Pi}h$ and $g \stackrel{\ast}{\rightarrow}_{\Pi}h$; we denote this by $f\downarrow_{\Pi} g$.
		\item[(f)] A term-rewriting system $\Pi$ on $V$ is called terminating if there is no infinite chain of one-step rewriting $$f_{0} \rightarrow_{\Pi} f_{1} \rightarrow_{\Pi} f_{2} \cdots.$$
		\item[(g)] A term-rewriting system $\Pi$ is called compatible with a linear order $\geq$ on $Z$, if $t>\bar v$ for each $t\rightarrow v\in\Pi$.
	\end{itemize}
\end{defn}

\subsection{Single  OPI of   Rota-Baxter type}\

Let us introduce OPIs of Rota-Baxter type following \cite{GGSZ,GaoGuo17}.

\begin{defn}[\cite{GGSZ,GaoGuo17}]\label{Def: OPI of RB type}
An OPI $\phi$ is said to be of Rota-Baxter type if $\phi$ is of the form $\lfloor x\rfloor\lfloor y\rfloor-\lfloor B(x, y)\rfloor,$ where $B(x, y)$ satisfies the following conditions:
	\begin{itemize}
		\item[(a)] $B(x, y)$ is linear in $x$ and $y$;
		\item[(b)] no monomial of $B(x, y)$ contains any subword of the form $\lfloor u\rfloor\lfloor v\rfloor$ for any $u,v\in\frakM(\{x,y\})\backslash\{1\}$;
		\item[(c)] for every set $Z$, the rewriting system $\Pi_{\phi}(Z):= \{\lfloor u\rfloor\lfloor v\rfloor\rightarrow\lfloor B(u,v)\rfloor~|~u,v \in \frakM(Z)\}$ is terminating;
		\item[(d)] for any set $Z$ and $u, v, w \in\frakM(Z)\backslash\{1\}$, $$B(B(u,v), w)-B(u, B(v, w))\stackrel{\ast}\rightarrow_{\Pi_{\phi}(Z)}0.$$
	\end{itemize}
\end{defn}

Gao, Guo, Sit and Zheng  gave a list of OPIs of Rota-Baxeter type and   they conjectured that these are all possible OPIs of Rota-Bax ter type.
\begin{List}[{\cite[Conjecture 2.37]{GGSZ}}]\label{Ex: list of OPIs of RB type}\
For any $c,\lambda\in\bk$, the OPI $\phi:= \lfloor x\rfloor\lfloor y\rfloor-\lfloor B(x, y)\rfloor$, where $B(x, y)$ is taken from the list below, is of Rota- Baxter type.
	\begin{itemize}
		\item[(1)] $x\lfloor y\rfloor \quad$(average operator),
		\item[(2)] $\lfloor x\rfloor y\quad$ (inverse average operator),
		\item[(3)] $x\lfloor y\rfloor+y\lfloor x\rfloor$,
		\item[(4)] $\lfloor x\rfloor y+\lfloor y\rfloor x$,
		\item[(5)] $x\lfloor y\rfloor+\lfloor x\rfloor y-\lfloor x y\rfloor \quad$ (Nijenhuis operator),
		\item[(6)] $x\lfloor y\rfloor+\lfloor x\rfloor y+ \lambda xy \quad$ (Rota-Baxter operator of weight $\lambda$),
		\item[(7)] $ x\lfloor y\rfloor-x\lfloor 1\rfloor y+\lambda x y$,
		\item[(8)] $\lfloor x\rfloor y-x\lfloor 1\rfloor y+\lambda x y$,
		\item[(9)]  $x\lfloor y\rfloor+\lfloor x\rfloor y-x\lfloor 1\rfloor y+\lambda x y \quad$ (generalized Leroux TD operator with weight $\lambda$),
		\item[(10)]  $x\lfloor y\rfloor+\lfloor x\rfloor y-x y\lfloor 1\rfloor-x\lfloor 1\rfloor y+\lambda x y$,
		\item[(11)]  $x\lfloor y\rfloor+\lfloor x\rfloor y-x\lfloor 1\rfloor y-\lfloor x y\rfloor+\lambda x y$,
		\item[(12)] $x\lfloor y\rfloor+\lfloor x\rfloor y-x\lfloor 1\rfloor y-\lfloor 1\rfloor x y+\lambda x y$,
		\item[(13)] $ c x\lfloor 1\rfloor y+\lambda x y \quad$(generalized endomorphisms),
		\item[(14)] $ c y\lfloor 1\rfloor x+\lambda y x \quad$(generalized antimorphisms).
	\end{itemize}
\end{List}

It is showed in \cite[Theorem 4.9]{GGSZ} that for an OPI $\phi$ of Rota-Baxter type, $S_{\phi}(Z)$ is a GS  basis of $\left\langle S_{\phi}(Z)\right\rangle_\uOpAlg$ with respect to some monomial order, denoted by $\leq_{\rm{db}}$ (see  \cite[Lemma 5.5]{GGSZ});   the leading monomial of $\phi$ under $\leq_{\rm{db}}$ is $\lfloor x\rfloor\lfloor y\rfloor$,  and
  the restriction of this order on $\TT(V)$ is the degree lexicographical order $\leq_{\text {dlex}}$ defined as follows.
\begin{defn}\label{Def: deg-lex order}
Let $Z$ be a set endowed with a well order $\leq_{Z}.$ By convention, define  $\operatorname{deg}_{Z}(1)=0;$ for $u=u_{1} \cdots u_{r} \in \calm(Z)\setminus \{1\}$ with $u_{1}, \dots, u_{r} \in Z,$ define $\operatorname{deg}_{Z}(u)=r.$

Define the degree lexicographical order $\leq_{\rm {dlex }}$ on $\calm(Z)$ by taking, for any $u, v \in \calm(Z)$, $u <_{\rm {dlex}}v$   if
	\begin{itemize}
		\item[(a)] either $\operatorname{deg}_{Z}(u)<\operatorname{deg}_{Z}(v)$, or
		\item[(b)] $\operatorname{deg}_{Z}(u)=\operatorname{deg}_{Z}(v)$, and $u=mu_{i}n$, $v=mv_{i}n^\prime$ for some $m,n,n^\prime\in \calm(Z)$ and $u_{i},v_{i}\in Z$ with $u_{i}<_{Z} v_{i}$.
	\end{itemize}
\end{defn}

\medskip

It is surprising to see that Theorem~\ref{Thm: GS basis for free Phi algebra over alg} could deal with all OPIs of Rota-Baxter type.
\begin{theorem}\label{Thm: GS basis for free Rota type over algebra}
	Let $Z$ be a set, $A=\bk \calm(Z)\slash I_A$ a unital $\bk$-algebra and $\phi$ an OPI of Rota-Baxter type.  Then we have:
	$$\calf^{\phi\zhx\Alg}_{\uAlg}(A)=\bk\frakM(Z)\slash\left\langle S_{\phi}(Z)\cup I_A\right\rangle_\uOpAlg.$$
	Moreover, assume $I_A$  has a GS  basis $G$ with respect to the degree lexicographical order $\leq_{\rm {dlex}}$. Then $S_{\phi}(Z)\cup G$ is an operated  GS  basis of $\left\langle S_{\phi}(Z)\cup I_A\right\rangle_\uOpAlg$ in $\bk\frakM(Z) $ with respect to $\leq_{\rm {db}}$.
\end{theorem}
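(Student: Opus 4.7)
The plan is to derive both assertions directly from the general machinery already set up in Section 5. The first equality will follow immediately from Proposition~\ref{Prop: from unital algebra to Phi algebra with relations} applied to $\Phi=\{\phi\}$, since $\phi$ is multilinear and $A=\bk\calm(Z)/I_A$. For the second, more substantial claim about the operated Gr\"obner-Shirshov basis, I would invoke Theorem~\ref{Thm: GS basis for free Phi algebra over alg} with the monomial order $\leq_{\rm db}$ on $\frakM(Z)$ constructed in \cite[Lemma 5.5]{GGSZ}, whose restriction to $\calm(Z)$ coincides with $\leq_{\rm dlex}$.

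To apply Theorem~\ref{Thm: GS basis for free Phi algebra over alg}, I must verify four hypotheses. First, that $\{\phi\}$ is operated Gr\"obner--Shirshov on $Z$ with respect to $\leq_{\rm db}$: this is precisely the content of \cite[Theorem 4.9]{GGSZ}. Second, that $G$ is a Gr\"obner--Shirshov basis of $I_A$ with respect to the restriction of $\leq_{\rm db}$ to $\calm(Z)$: this is exactly the standing hypothesis, because that restriction is $\leq_{\rm dlex}$.

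The remaining two conditions concern the shape of the leading monomial $\overline{\phi}=\lfloor x\rfloor\lfloor y\rfloor$ of $\phi$. I would first observe that $\overline{\phi}$ has breadth $2$ with both factors lying in $\lfloor\frakM(\{x,y\})\rfloor$; hence no subword of $\overline{\phi}$ can lie in $\cals(X)\setminus X$, since such a subword would need to be a word of length $\geq 2$ built purely from elements of $X$, which is impossible. Next, for arbitrary $u_1,u_2\in\frakM(Z)$ I would check that $\phi(u_1,u_2)=\lfloor u_1\rfloor\lfloor u_2\rfloor-\lfloor B(u_1,u_2)\rfloor$ has leading monomial $\lfloor u_1\rfloor\lfloor u_2\rfloor=\overline{\phi}(u_1,u_2)$: this holds because under $\leq_{\rm db}$ the breadth-$2$ monomial $\lfloor u_1\rfloor\lfloor u_2\rfloor$ strictly dominates the breadth-$1$ monomial $\lfloor B(u_1,u_2)\rfloor$.

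If any difficulty arises, it will lie in carefully unpacking the definition of $\leq_{\rm db}$ to confirm the breadth-dominance argument in the last verification; however, this is essentially already encoded in the construction of $\leq_{\rm db}$ from \cite[Lemma 5.5]{GGSZ}, where breadth is compared before the finer data. Once the four hypotheses are in hand, Theorem~\ref{Thm: GS basis for free Phi algebra over alg} delivers the desired conclusion directly, and as a bonus Corollary~\ref{Cor: linear basis for free Phi algebra over alg} will simultaneously furnish an explicit linear basis of $\calf^{\phi\zhx\Alg}_{\uAlg}(A)$ via $\eta(\Irr(S_\phi(Z)\cup G))$.
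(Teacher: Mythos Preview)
Your proposal is correct and follows essentially the same route as the paper: the paper's proof consists of the single observation that $\lfloor x\rfloor\lfloor y\rfloor$ has no subword in $\cals(X)\backslash X$ and then invokes Theorem~\ref{Thm: GS basis for free Phi algebra over alg}. You have simply been more explicit in spelling out all four hypotheses of that theorem, including the leading-monomial stability condition, which the paper leaves implicit (it is already built into the statement from \cite{GGSZ} that the leading monomial of $\phi$ under $\leq_{\rm db}$ is $\lfloor x\rfloor\lfloor y\rfloor$).
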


\begin{proof}
Since $\lfloor x\rfloor\lfloor y\rfloor$ has no subword in $\cals(X)\backslash X$, the  result is immediately obtained by our main result  Theorem \ref{Thm: GS basis for free Phi algebra over alg}.
\end{proof}

By Corollary~\ref{Cor: linear basis for free Phi algebra over alg}, one could determine  a linear basis of the free $\phi$-algebra of Rota-Baxter type over a unital algebra.

\begin{theorem}\label{Thm: Linear basis of RB type alg on alg}
	Let $Z$ be a set, $A=\bk \calm(Z)\slash I_A$ a unital algebra with a GS  basis $G$ with respect to $\leq_{\rm{dlex}}$. Let $\phi$ be an OPI of Rota-Baxter type.  Then $$\Irr(S_{\phi}(Z)\cup G)=\frakM(Z)\backslash \left\lbrace q|_{\bar{s}},q|_{\lfloor u\rfloor\lfloor v\rfloor}~|~s\in G,q\in\frakM^\star(Z),u,v\in\frakM(Z)\right\rbrace$$ is a basis of the free $\phi$-algebra $\calf^{\phi\zhx\Alg}_{\uAlg}(A)$ over $A$.
\end{theorem}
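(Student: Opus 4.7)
The plan is to derive this result almost mechanically from the Composition--Diamond Lemma once we have the operated GS basis in hand. First I would invoke Theorem~\ref{Thm: GS basis for free Rota type over algebra} to conclude that $S_\phi(Z)\cup G$ is an operated GS basis of the operated ideal $\langle S_\phi(Z)\cup I_A\rangle_\uOpAlg$ in $\bk\frakM(Z)$ with respect to the monomial order $\leq_\mathrm{db}$. Combined with Proposition~\ref{Prop: from unital algebra to Phi algebra with relations}, this identifies $\calf^{\phi\zhx\Alg}_\uAlg(A)$ with $\bk\frakM(Z)/\langle S_\phi(Z)\cup I_A\rangle_\uOpAlg$.

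Next I would apply the Composition--Diamond Lemma (Theorem~\ref{Thm: CD}), or equivalently its packaging in Corollary~\ref{Cor: linear basis for free Phi algebra over alg}: since $S_\phi(Z)\cup G$ is an operated GS basis, the image $\eta(\Irr(S_\phi(Z)\cup G))$ forms a $\bk$-basis of the quotient $\calf^{\phi\zhx\Alg}_\uAlg(A)$.

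It then remains to compute $\Irr(S_\phi(Z)\cup G)$ explicitly. By the definition of $\Irr$, we must delete from $\frakM(Z)$ every monomial of the form $q|_{\overline{s}}$ with $q\in\frakM^\star(Z)$ and $s\in S_\phi(Z)\cup G$. For $s\in G$ this contributes the excluded family $\{q|_{\overline{s}}\}$. For $s\in S_\phi(Z)$, any such element has the form $\phi(u,v)=\lfloor u\rfloor\lfloor v\rfloor-\lfloor B(u,v)\rfloor$ with $u,v\in\frakM(Z)$; under $\leq_\mathrm{db}$ the leading monomial is precisely $\lfloor u\rfloor\lfloor v\rfloor$ (this is exactly the design feature of $\leq_\mathrm{db}$ recalled before Definition~\ref{Def: deg-lex order}, and used in the proof of \cite[Theorem 4.9]{GGSZ}). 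Hence the contribution from $S_\phi(Z)$ is the family $\{q|_{\lfloor u\rfloor\lfloor v\rfloor}\mid u,v\in\frakM(Z),\,q\in\frakM^\star(Z)\}$, yielding exactly the set stated in the theorem.

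I do not anticipate a serious obstacle here: the work has already been done in Theorem~\ref{Thm: GS basis for free Rota type over algebra} and in the general Composition--Diamond machinery. The only point that requires a small verification is the identification of the leading monomial of $\phi(u,v)$ under $\leq_\mathrm{db}$, which follows from the construction of the order in \cite{GGSZ} and is uniform across the entire List~\ref{Ex: list of OPIs of RB type}; this is where one implicitly uses that $B(u,v)$ contains no subword of the form $\lfloor\cdot\rfloor\lfloor\cdot\rfloor$ (condition (b) of Definition~\ref{Def: OPI of RB type}), so that $\lfloor u\rfloor\lfloor v\rfloor$ strictly dominates every monomial appearing in $\lfloor B(u,v)\rfloor$.
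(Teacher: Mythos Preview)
Your proposal is correct and follows exactly the approach the paper intends: the paper does not give a detailed proof of this theorem but simply states that it follows from Corollary~\ref{Cor: linear basis for free Phi algebra over alg} (after Theorem~\ref{Thm: GS basis for free Rota type over algebra}), and your argument just unpacks that reference by invoking the Composition--Diamond Lemma and identifying the leading monomials of elements of $S_\phi(Z)$ as $\lfloor u\rfloor\lfloor v\rfloor$. Your extra verification of the leading monomial via condition~(b) of Definition~\ref{Def: OPI of RB type} is a useful clarification that the paper leaves implicit.
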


\begin{exam}
	Consider the OPI $\phi_\Nij$ of type (5) in Example~\ref{Ex: list of OPIs of RB type}:
	$$\phi_{\Nij}(x,y)=\left\lfloor x\right\rfloor\left\lfloor y\right\rfloor-\left\lfloor\left\lfloor x\right\rfloor y\right\rfloor-\left\lfloor x\left\lfloor y\right\rfloor\right\rfloor+\left\lfloor\left\lfloor x y\right\rfloor\right\rfloor.$$
	A $\phi_{\Nij}$-algebra is called a Nijenhuis algebra.

	In \cite{LeiGuo}, given a linear basis of a $\bk$-algebra $A$, Lei and Guo   gave a $\bk$-basis of the free Nijenhuis algebra  $\calf_{\uAlg}^{\phi_\Nij\zhx\Alg}(A)$ over $A$. When the $\bk$-basis of $A$ can be deduced from a GS basis of $A$,   it can be seen that their  result is  a consequence of  Theorem~\ref{Thm: Linear basis of RB type alg on alg}.

\end{exam}


\subsection{Single OPI of differential type }\label{Case of single OPI of differential type}

\begin{defn}[\cite{GSZ,GaoGuo17}]
An OPI $\phi$ is said to be of differential type if $\phi$ is of the form $\lfloor x y\rfloor-N(x, y)$, where $N(x, y)$ satisfies the following conditions:
	\begin{itemize}
		\item[(a)] $N(x, y)$ is linear in $x$ and $y$;
		\item[(b)] no monomial of $N(x, y)$ contains any subword of the form $\lfloor uv\rfloor$ for any $u,v\in\frakM(\{x,y\})\backslash\{1\}$;
		\item[(c)] for any set $Z$ and $u, v, w \in\frakM(Z)\backslash\{1\}$, $$N(u v, w)-N(u, v w)\stackrel{\ast}\rightarrow_{\Pi_{\phi}(Z)}0,$$
		where $\Pi_{\phi}(Z):= \{\lfloor uv\rfloor\rightarrow N(u,v)~|~u,v \in \frakM(Z)\}$.
	\end{itemize}
\end{defn}

  Guo, Sit and Zhang  gave a list of OPIs of differential type and   they conjectured that these are all possible OPIs of differential type.

\begin{List}[{\cite[Conjecture 4.7]{GSZ}}]\label{Ex: list of OPIs of diff  type}
For any $a,b,c,\lambda_{ij}\in\bk,~i, j \geq0$, the OPI $\phi:= \lfloor x y\rfloor-N(x, y)$, where $N(x, y)$ is taken from the list below, is of differential type.

	\begin{itemize}
		\item[(1)] $a(x\lfloor y\rfloor+\lfloor x\rfloor y)+b\lfloor x\rfloor\lfloor y\rfloor+cxy$ where $a^{2}=a+bc$,
		\item[(2)] $a b^{2} y x+b x y+a\lfloor y\rfloor\lfloor x\rfloor-ab(y\lfloor x\rfloor+\lfloor y\rfloor x)$,
		\item[(3)] $\sum_{i, j \geq 0} \lambda_{i j}\lfloor 1\rfloor^{i} x y\lfloor 1\rfloor^{j}$ with the convention that $\lfloor 1\rfloor^{0}=1$,
		\item[(4)] $x\lfloor y\rfloor+\lfloor x\rfloor y+a x\lfloor 1\rfloor y+b x y$,
		\item[(5)] $\lfloor x\rfloor y+a(x\lfloor 1\rfloor y-x y\lfloor 1\rfloor)$,
		\item[(6)] $x\lfloor y\rfloor+a(x\lfloor 1\rfloor y-\lfloor 1\rfloor x y)$.
	\end{itemize}
\end{List}

It is showed in \cite[Theorem 5.7]{GSZ} that for an OPI $\phi$ of differential type, $S_{\phi}(Z)$ is an operated GS  basis of $\left\langle S_{\phi}(Z)\right\rangle_\uOpAlg$ with respect to some monomial order, under which the leading monomial of $\phi$ is $\lfloor xy\rfloor$. Moreover, the restriction of this order on $\TT(V)$ is $\leq_{\rm{dlex}}$.

Since the leading term $\lfloor u v\rfloor$ contains a subword $uv\in \cals(X)\backslash X$, Theorem \ref{Thm: GS basis for free Phi algebra over alg} can not be applied to differential type directly. However, in some special cases, we can still compute the operated GS basis by rewriting the OPI. For case (1) with $a=b=0$ Example~\ref{Ex: list of OPIs of diff  type}, we substitute $xy$ by $z$ and get the following two types of  OPIs both with leading monomial $\lfloor z\rfloor$,

	\begin{itemize}
		\item[(1$^\prime$)] $\lfloor z\rfloor-cz$ for any $c$,
	\end{itemize}
They define the same $\phi$-algebras as case (1) (with $a=b=0$) in Example~\ref{Ex: list of OPIs of diff  type}. By Theorem~\ref{Thm: GS basis for free Phi algebra over alg} and Corollary~\ref{Cor: linear basis for free Phi algebra over alg}, we have the following result.

\begin{prop}\label{Prop: GS basis for differential type 1+3}
	Let $Z$ be a set, $A=\bk \calm(Z)\slash I_A$ a unital algebra with a GS  basis $G$ with respect to $\leq_{\rm{dlex}}$. Let $\phi$ be an OPI of type (1$^\prime$).  Then $S_{\phi}(Z)\cup G$ is an operated  GS  basis of $\left\langle S_{\phi}(Z)\cup I_A\right\rangle_\uOpAlg$ in $\bk\frakM(Z) $ with respect to the monomial order on $\frakM(Z)$ given in \cite{GSZ}, and
	$$\Irr(S_{\phi}(Z)\cup G)=\frakM(Z)\backslash \left\lbrace q|_{\bar{s}},q|_{\lfloor u\rfloor}~|~s\in G,q\in\frakM^\star(Z),u\in\frakM(Z)\right\rbrace=\calm(Z)\backslash \left\lbrace q|_{\bar{s}}~|~s\in G\right\rbrace$$ is a linear basis of the free $\phi$-algbra $\calf^{\phi\zhx\Alg}_{\uAlg}(A)$ over $A$.
\end{prop}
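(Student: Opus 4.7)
The plan is to apply Theorem~\ref{Thm: GS basis for free Phi algebra over alg} directly after verifying its hypotheses for the single-variable OPI $\phi(z)=\lfloor z\rfloor-cz$, and then simplify the resulting irreducible set.

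First, I would verify the two technical conditions of Theorem~\ref{Thm: GS basis for free Phi algebra over alg}. Since the variable set for $\phi$ is $X=\{z\}$, the leading monomial $\overline{\phi}=\lfloor z\rfloor$ lies in $\lfloor X\rfloor$ and trivially has no subword in $\cals(X)\backslash X$. Moreover, for any $u\in\frakM(Z)$, $\phi(u)=\lfloor u\rfloor-cu$, and with respect to the monomial order of \cite{GSZ} we have $\lfloor u\rfloor>u$, so the leading monomial is $\overline{\phi}(u)=\lfloor u\rfloor$, confirming the hypotheses. Next I need $S_\phi(Z)$ to be operated GS in $\bk\frakM(Z)$; this is already established in \cite{GSZ, Theorem 5.7} for all differential-type OPIs, and in our case is also easy to check by hand because $|\overline{\phi}|=1$ rules out intersection compositions, and each inclusion composition $\lfloor u\rfloor - \lfloor q'|_{cv}\rfloor$ (with $u=q'|_{\lfloor v\rfloor}$) rewrites to zero via the rule $\lfloor w\rfloor\to cw$ applied twice.

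With both hypotheses verified, Theorem~\ref{Thm: GS basis for free Phi algebra over alg} immediately yields that $S_\phi(Z)\cup G$ is an operated GS basis of $\langle S_\phi(Z)\cup I_A\rangle_\uOpAlg$ in $\bk\frakM(Z)$. Corollary~\ref{Cor: linear basis for free Phi algebra over alg} then gives that $\eta(\Irr(S_\phi(Z)\cup G))$ is a $\bk$-basis of $\calf^{\phi\zhx\Alg}_\uAlg(A)$.

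It remains to rewrite $\Irr(S_\phi(Z)\cup G)$ in the claimed closed form. The leading monomials of elements in $S_\phi(Z)$ are precisely $\{\lfloor u\rfloor\mid u\in\frakM(Z)\}$, and those of $G\subseteq\bk\calm(Z)$ lie in $\calm(Z)$, giving the first equality by unravelling the definition of $\Irr$. For the second equality, observe that removing all elements of the form $q|_{\lfloor u\rfloor}$ (with $q\in\frakM^\star(Z)$, $u\in\frakM(Z)$) is exactly removing every element of $\frakM(Z)$ that contains an occurrence of the operator $\lfloor\,\rfloor$, leaving $\calm(Z)$. Within $\calm(Z)$, any further forbidden element $q|_{\bar s}$ with $s\in G$ must have $q\in\calm^\star(Z)$ (otherwise $q|_{\bar s}$ already contains a bracket and has been discarded). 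The main (and only delicate) obstacle is keeping track of the notational bookkeeping in this last simplification; the substantive content reduces entirely to checking the two syntactic hypotheses of Theorem~\ref{Thm: GS basis for free Phi algebra over alg}, which is straightforward in this one-variable formulation.
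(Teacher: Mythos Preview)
Your proposal is correct and follows exactly the paper's approach: the paper simply states ``By Theorem~\ref{Thm: GS basis for free Phi algebra over alg} and Corollary~\ref{Cor: linear basis for free Phi algebra over alg}, we have the following result,'' and you have spelled out the routine hypothesis-checking and the simplification of $\Irr$. One minor caution: the one-variable OPI $\phi(z)=\lfloor z\rfloor-cz$ is not literally of differential type in the sense of Definition~6.3 (which requires the form $\lfloor xy\rfloor-N(x,y)$), so your appeal to \cite[Theorem 5.7]{GSZ} is not quite on the nose---but your direct hand-check of the compositions is correct and suffices.
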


\begin{remark}
	Consider the OPI $\phi_\Dif\in\bk\frakM(\{x,y\})$ of type (1) in Example~\ref{Ex: list of OPIs of diff  type}, by taking $a=1$, $b=\lambda$ and $c=0$:
	$$\phi_\Dif(x,y)=\left\lfloor xy\right\rfloor-x\left\lfloor y\right\rfloor-\left\lfloor x\right\rfloor y-\lambda\left\lfloor x\right\rfloor\left\lfloor y\right\rfloor.$$
	A $\phi_\Dif$-algebra is called a differential algebra of weight $\lambda$.

	In \cite{GuoLi}, Guo and Li showed that  the free differential algebra $\calf_{\uAlg}^{\phi_\Dif\zhx\Alg}(A)$ over a unital $\bk$-algebra $A=\bk\calm(X)\slash I_A$ is the same as  the free differential algebra over the set $X$ modulo the differential ideal generated by the ideal $I_A$, which can be deduced from  Proposition~\ref{Prop: from unital algebra to Phi algebra with relations} as well.

Guo and Li  also gave a $\bk$-basis of $\calf_{\uAlg}^{\phi_\Dif\zhx\Alg}(A)$ by using the theory of  differential GS bases. Their method is completely different from ours.

Moreover, our result Theorem~\ref{Thm: GS basis for free Phi algebra over alg} could not apply directly to differential algebras under the monomial order introduced in \cite{GSZ}; see Remark~\ref{Rem: diff algebra with strange leading term}. However, it may be possible to modify the monomial order so that we can apply Theorem~\ref{Thm: GS basis for free Phi algebra over alg} and this task will be taken in a subsequent paper.
	
\end{remark}



\subsection{Multiple OPIs}

\begin{exam}[Averaging algebra]\
Take $\Phi_{\Av}$ to be the following set of operated polynomial identities:
	$$\{\left \lfloor x_1\right\rfloor \left\lfloor x_2\right\rfloor-\left\lfloor x_1\left \lfloor x_2 \right\rfloor \right\rfloor,~ \left\lfloor x_1\right\rfloor \left\lfloor x_2 \right\rfloor-\left\lfloor \left\lfloor x_1\right\rfloor x_2\right\rfloor\}.$$ Then a $\Phi_{Av}$-algebra is called an averaging algebra.

 Let $$\bar\Phi_{\Av}=\{\left\lfloor \left \lfloor x_1 \right\rfloor x_2\right\rfloor-\left \lfloor x_1\right\rfloor \left\lfloor x_2\right\rfloor,~ \left\lfloor x_1\left \lfloor x_2 \right\rfloor \right\rfloor-\left\lfloor x_1\right\rfloor \left\lfloor x_2 \right\rfloor,~\lfloor x_1\rfloor^{(2)}\lfloor x_2\rfloor-\lfloor x_1\rfloor\lfloor x_2\rfloor^{(2)}\}.$$ Note that   $\bar\Phi_{\Av}$  has three elements.
Consider the free averaging algebra $\calf^{\Phi_{\Av}\zhx\Alg}_{\Set}(Z)$ over a given set $Z$.
It is showed in \cite[Theorem 3.10]{GaoZhang} that the set $S_{\bar\Phi_{\Av}}(Z)$ is an operated  GS  basis of $\calf^{\Phi_{\Av}\zhx\Alg}_{\Set}(Z)$ with respect to the order defined in \cite{GSZ}
and the leading monomials of the elements in $\bar\Phi_{\Av}$ are $\lfloor\lfloor x_1\rfloor x_2\rfloor$, $\lfloor x_1\lfloor x_2\rfloor\rfloor$ and $\lfloor x_1\rfloor^{(2)}\lfloor x_2\rfloor$ respectively.

By Theorem~\ref{Thm: GS basis for free Phi algebra over alg} and Corollary~\ref{Cor: linear basis for free Phi algebra over alg}, given a unital algebra $A$ with a GS  basis $G$ with respect to $\leq_{\rm{dlex}}$, the set
$ S_{\bar\Phi_{\Av}}(Z)\cup G)$   is an operated GS basis of the free averaging algebra $\calf^{\Phi_{\Av}\zhx\Alg}_{\uAlg}(A)$ over $A$  and
the set
$$\Irr(S_{\bar\Phi_{Av}}(Z)\cup G)=\frakM(Z)\backslash \left\lbrace q|_{\bar{s}},q|_{\lfloor\lfloor u\rfloor v\rfloor},q|_{\lfloor u\lfloor v\rfloor\rfloor},q|_{\lfloor u\rfloor^{(2)}\lfloor v\rfloor}~|~s\in G,q\in\frakM^\star(Z),u,v\in\frakM(Z)\right\rbrace$$
 is a linear basis of   $\calf^{\Phi_{\Av}\zhx\Alg}_{\uAlg}(A)$.

\end{exam}

\begin{exam}[Reynolds algebra]\
Let $\phi_{\Rey}$ be the following operated polynomial identity:
\[\left\lfloor x_1\right\rfloor \left\lfloor x_2\right\rfloor-\left\lfloor x_1\left\lfloor x_2\right\rfloor \right\rfloor-\left\lfloor\left\lfloor x_1\right\rfloor x_2\right\rfloor +\left\lfloor\left\lfloor x_1\right\rfloor\left\lfloor x_2\right\rfloor\right\rfloor.\]
Then a $\phi_{\Rey}$-algebra is called a Reynolds algebra.

Consider the free Reynolds algebra $\calf^{\phi_{\Rey}\zhx\Alg}_{\Set}(Z)$ over a given set $Z$.
By Theorem~\ref{Thm: CD} and    \cite[Proposition 2.13 and Theorem 3.9]{ZhangGaoGuo}, the set $S_{\bar\Phi_{\Rey}}(Z)$ is an operated GS  basis in $\calf^{\phi_{\Rey}\zhx\Alg}_{\Set}(Z)$ with respect to the order defined in \cite{GSZ}, where
$$\bar\Phi_{\Rey}=\{\lfloor\lfloor x_1\rfloor \cdots\lfloor x_n\rfloor\rfloor-\sum_{i=1}^{n}\lfloor\lfloor x_1\rfloor\cdots\lfloor x_{i-1}\rfloor x_i \lfloor x_{i+1}\rfloor \cdots\lfloor x_n\rfloor\rfloor+\lfloor x_1\rfloor\cdots\lfloor x_n\rfloor~|~n\geq2\},$$
and the leading monomials of the elements in $\bar\Phi_{\Rey}$ are of the form $\lfloor\lfloor x_1\rfloor\lfloor x_2\rfloor \cdots\lfloor x_n\rfloor\rfloor$.

By Theorem~\ref{Thm: GS basis for free Phi algebra over alg} and Corollary~\ref{Cor: linear basis for free Phi algebra over alg}, given a unital algebra $A$ with a GS  basis $G$ with respect to $\leq_{\rm{dlex}}$, the set
$ S_{\bar\Phi_{\Rey}}(Z)\cup G$   is an operated GS basis of the free Reynolds algebra $\calf^{\phi_{\Rey}\zhx\Alg}_{\uAlg}(A)$ over $A$ and  the set
$$\Irr(S_{\bar\Phi_{\Rey}}(Z)\cup G)=\frakM(Z)\backslash \left\lbrace q|_{\bar{s}},q|_{\lfloor\lfloor u_1\rfloor \cdots\lfloor u_n\rfloor\rfloor}~|~s\in G,q\in\frakM^\star(Z),u_1,\cdots,u_n\in\frakM(Z),n\geq2\right\rbrace$$
 is a linear basis of   $\calf^{\phi_{\Rey}\zhx\Alg}_{\uAlg}(A)$.

Note that in this case, $\bar\Phi_{\Rey}$  has infinitely many elements.

\end{exam}


\begin{thebibliography}{99}



\bibitem{Aguiar}
   M.~Aguiar,   \emph{On the associative analog of Lie bialgebras}. J. Algebra \textbf{244} (2001), 492-532.


 \bibitem{AM} M.~Aguiar and W.~Moreira,
\emph{Combinatorics of the free Baxter algebra},
Electron. J. Combin. \textbf{13} (2006), no. 1, Research Paper 17, 38 pp.

  \bibitem{Atkinson} F.~V.~Atkinson, \emph{Some aspects of Baxter's functional equation}, J. Math. Anal. Appl.
\textbf{7} (1963), 1-30.

 \bibitem{BN} F.~Baader, T.~Nipkow, \emph{Term Rewriting and All That}, Cambridge U. P., Cambridge, 1998.

 \bibitem{Bai} C.~Bai,   \emph{A unified algebraic approach to classical Yang-Baxter equation}. J. Phys. A Math. Theor.
40(2007), 11073-11082.

 \bibitem{BBGN}
  C.~Bai,   O.~Bellier, L.~Guo and  X.~Ni,   \emph{Spliting of operations, Manin products and Rota-Baxter operators}.
Int. Math. Res. Not. 2013(3), 485-524.

\bibitem{Bax} G.~Baxter, \emph{An analytic problem whose solution follows from a simple algebraic identity,}
Pacific J. Math. \textbf{10} (1960), 731-742.



\bibitem{BokutChen14} L.~A.~Bokut and Y.~Chen, \emph{Gr\"{o}bner-Shirshov bases and their calculations}, Bull.
Math. Sci. \textbf{4} (2014),  325-395.

 \bibitem{BokutChenQiu} L.~A.~Bokut, Y.~Chen   and J.~Qiu,  \emph{Gr\"{o}bner-Shirshov bases for associative algebras with multiple operators and free Rota-Baxter algebras}, J. Pure Appl. Algebra \textbf{214} (2010) 89-110.

\bibitem{BokutChen20} L.~A.~Bokut, Y.~Chen,   K.~Kalorkoti, P.~Kolesnikov  and V.~Lopatkin, \emph{Gr\"obner-Shirshov Bases:
Normal Forms, Combinatorial and Decision Problems in Algebra}, World Scientific 2020.





\bibitem{Buc} B.~Buchberger,\emph{ An algorithm for finding a basis for the residue class ring of a
zero-dimensional polynomial ideal}, Ph.D. thesis, University of Innsbruck (1965) (in
German).


\bibitem{Cartier} P.~Cartier, \emph{On the structure of free Baxter algebras}, Adv. Math. \textbf{9} (1972) 253-265.


  \bibitem{Cohn} P.~M.~Cohn, \emph{Further Algebra and Applications, Springer}, second edition 2003.

\bibitem{CK} A.~Connes and  D.~Kreimer,  \emph{Renormalization in quantum field theory and the Riemann-Hilbert problem. I. The Hopf algebra structure of graphs and the main theorem}, Comm. Math. Phys. \textbf{210} (2000) 249-273.

 \bibitem{CM} A.~Connes and M.~Marcolli,  \emph{From Physics to Number Theory via Noncommutative Geometry, Part II:
Renormalization, the Riemann-Hilbert correspondence, and motivic Galois theory}, In: ``Frontiers in Number Theory, Physics, and Geometry", Springer-Verlag (2006), 617-713.


\bibitem{DH} V.~Drensky and R.~Holtkamp, \emph{Planar trees, free nonassociative algebras, invariants, and elliptic integrals}, Algebra-Discrete Math. \textbf{2} (2008) 1-41.


\bibitem{ERG08a}K.~Ebrahimi-Fard and  L.~Guo, \emph{Free Rota-Baxter algebras and rooted trees,} J. Algebra Appl.  \textbf{7}  (2008),  no. 2, 167-194.

\bibitem{ERG08b}K.~Ebrahimi-Fard and  L.~Guo, \emph{Rota-Baxter algebras and dendriform algebras,}  J. Pure Appl. Algebra \textbf{212} (2008), no. 2, 320-339.


    \bibitem{FLS} J.~Freitag, W.~Li and  T.~Scanlon,   \emph{Differential Chow varieties exist},   J. Lond. Math. Soc.  \textbf{95} (2017), 128-156.



\bibitem{GaoGuo17} X.~Gao and L.~Guo, \emph{Rota's Classification Problem, rewriting systems and Gr\"obner-Shirshov bases}, J. Algebra  \textbf{470} (2017), 219-253.

 \bibitem{GaoGuoR}  X.~Gao,  L.~Guo and M.~Rosenkranz, \emph{On rings of differential Rota-Baxter operators,} Internat. J. Algebra Comput. 28 (2018), no. 1, 1-36.

 \bibitem{GGSZ} X.~Gao, L.~Guo, W.~Sit and S.~Zheng, \emph{Rota-Baxter type operators, rewriting systems and Gr\"obner-Shirshov bases},     arXiv:1412.8055.





 \bibitem{GaoZhang}  X.~Gao and  T.~Zhang, \emph{Averaging algebras, rewriting systems
and Gr\"{o}bner-Shirshov bases},   J. Algebra Appl. \textbf{17} (2018), no. 7, 1850130.









\bibitem{Green} E.~Green, \emph{An introduction to noncommutative Groebner bases}, Lect. Notes Pure Appl. Math., \textbf{151} (1993), 167-190.

    \bibitem{GuGuo}  N.~S.~Gu and L.~Guo,  \emph{Generating functions from the viewpoint of Rota-Baxter algebras}. Discrete Math. \textbf{338}  (2015), 536-554.






\bibitem{Guo09a}  L.~Guo, \emph{Operated semigroups, Motzkin paths and rooted trees}, J. Algebra Comb. \textbf{29}
(2009), 35-62.

\bibitem{Guo09b}
L.~Guo,   \emph{What is ...... a Rota-Baxter algebra?}
Notices Amer. Math. Soc.  \textbf{56}  (2009),  no. 11, 1436-1437.

\bibitem{Guo12} L.~Guo, \emph{An Introduction to Rota-Baxter Algebra},  International Press (US) and Higher
Education Press (China), 2012.

\bibitem{GL} L.~Guo and F.~Li,  \emph{Structure of Hochschild cohomology of path algebras and differential formulation of Euler's polyhedron formula},   Asian J. Math.  \textbf{18}  (2014), 545-572.

\bibitem{GuoLi} L.~Guo and Y.~Li, \emph{Construction of free differential algebras by extending Gr\"{o}bner-Shirshov bases},  J. Symbolic Comput. \textbf{107} (2021), 167-189.

\bibitem{GuoKeigher00a} L.~Guo and W.~Keigher, \emph{Baxter algebras and shuffle products}, Adv. Math.
\textbf{150}  (2000), 117-149.

\bibitem{GuoKeigher00b}  L.~Guo and W.~Keigher, \emph{On free Baxter algebras: completions and the
internal construction}, Adv. Math. \textbf{151}  (2000), 101-127.

\bibitem{GuoSit}L.~Guo and W.~Y.~Sit, \emph{Enumeration of Rota-Baxter words}, in: Proceedings ISSAC 2006, Genoa, Italy, ACM Press, 2006

 \bibitem{GSZ}  L.~Guo, W.~Y.~Sit and R.~Zhang, \emph{Differential type operators and Gr\"obner-Shirshov bases}, J. Symbolic Comput. \textbf{52} (2013), 97-123.

\bibitem{Guozhang}  L.~Guo and  B.~Zhang,     \emph{Renormalization of multiple zeta values}. J. Algebra \textbf{319} (2008), 3770-3809.

\bibitem{Higgins} P.~J.~Higgins, \emph{Groups with multiple operators}, Proc. London Math. Soc. \textbf{6} (1956) 366-416.

\bibitem{Kol} E.~R.~Kolchin,   \emph{Differential Algebras and Algebraic Groups}, Academic Press, New York, 1973.

 \bibitem{Kurosh}  A.~G.~Kurosh, \emph{Free sums of multiple operator algebras}, Sib. Math. J. \textbf{1} (1960) 62-70 (in Russian).







\bibitem{LeiGuo}P.~Lei and L.~Guo, \emph{Nijenhuis algebras, NS algebras, and N-dendriform algebras}, Front. Math. China  \textbf{7}  (2012),  no. 5, 827-846.

    \bibitem{Loday}
J.-L.~Loday,  \emph{On the operad of associative algebras with derivation},  Georgian Math. J.  \textbf{17} (2010), 347-372.


\bibitem{Mag} A.~R.~Magid,  \emph{Lectures on differential Galois theory},   University Lecture Series  {\bf 7}, American Mathematical Society, 1994.


\bibitem{Malbos} P.~Malbos, \emph{Lectures on
Algebraic Rewriting}, available at http://math.univ-lyon1.fr/~malbos/ens.html.


 \bibitem{MS} A.~Medvedev and  T.~Scanlon,  \emph{Invariant varieties for polynomial dynamical systems},   Ann. Math., \textbf{179} (2014), 81-177.

     \bibitem{Poi} L.~Poinsot,  \emph{Differential (Lie) algebras from a functorial point of view},  Adv. Appl. Math.  \textbf{72} (2016), 38-76.

\bibitem{Poi1} L.~Poinsot,  \emph{Differential (monoid) algebra and more}, In: Post-Proceedings of Algebraic and Algorith-mic Differential and Integral Operators Session, AADIOS 2012,   Lecture Notes in Comput. Sci. \textbf{8372}, Springer, 2014, 164-189.


 \bibitem{PS} M.~van der Put and M.~Singer,  \emph{Galois Theory of Linear Differential Equations},    Grundlehren der Mathematischen Wissenschaften  \textbf{328}. Springer-Verlag, Berlin, 2003.

 \bibitem{Rit} J.~F.~Ritt,  \emph{Differential Algebra},   Amer. Math. Sco. Colloq. Pub.  \textbf{33} (1950), Amer. Math. Soc., New York.

\bibitem{Rit1} J.~F.~Ritt,  \emph{Differential equations from the algebraic standpoint},  Amer. Math. Sco. Colloq. Pub.  \textbf{14} (1934), Amer. Math. Soc., New York.


\bibitem{Rota69} G.~C.~Rota, \emph{Baxter algebras and combinatorial identities I, II}, Bull. Amer. Math.
Soc. \textbf{75} (1969) 325-329, pp. 330-334.

 \bibitem{Rota95} G.-C.~Rota,   \emph{Baxter operators, an introduction}, In: Joseph P. S. K., ed., Gian-Carlo Rota on
Combinatorics, Introductory Papers and Commentaries 1995. Boston,  Birkhauser.




\bibitem{Shirshov} A.~I.~Shirshov,   \emph{Some algorithmic problems for $\epsilon$-algebras}. Sibirsk. Mat. $\breve{Z}$. \textbf{3}  (1962), 132-137.

  \bibitem{Wu} W.-T.~Wu,  \emph{On the Decision Problem and the
Mechanization of Theorem Proving in Elementary Geometry}.
Scientia Sinica  {\bf 21} (2), (1978), 159-172.  Also reprinted in
``Contemporary Mathematics,'' Vol. 29 (1984), 213-241.

\bibitem{Wu2} W.-T.~Wu, \emph{A constructive theory of differential algebraic geometry based on works of J. F. Ritt with particular
applications to mechanical theorem-proving of differential
geometries},
  Differential Geometry and Differential Equations  (Shanghai, 1985),
Lecture Notes in Math. {\bf 1255}, Springer-Verlag, Berlin-New York, 1987, 173-189.


	\bibitem{ZhangGao} J.~Zhang and  X.~Gao,  \emph{Free operated monoids and rewriting systems,} Semigroup Forum \textbf{ 97}  (2018),  no. 3, 435-456.


\bibitem{ZhangGaoGuo} T.~Zhang, X.~Gao and L.~Guo,  \emph{Reynolds algebras and their free objects from bracketed words and rooted trees}, arXiv:1911.08970.
	
\end{thebibliography}
\end{document}